\documentclass[final,leqno]{siamltex}
\usepackage{booktabs} 
\usepackage[noadjust]{cite}
\usepackage{ulem}
\usepackage{amsmath}
\allowdisplaybreaks[4]
\usepackage{cancel}
\usepackage{multirow}
\usepackage{graphicx}
\usepackage{amssymb}
\usepackage{color}
\usepackage{mathtools}
\usepackage{tikz}
\usepackage{hyperref}
\usepackage{float}
\usepackage[toc,page]{appendix}
\usepackage{mathrsfs}
 \DeclarePairedDelimiter{\norm}{\lVert}{\rVert}
\usepackage{latexsym, bm}
\numberwithin{equation}{section}
\newtheorem{remark}{Remark}[section]

\newcommand{\vertiii}[1]{{\left\vert\kern-0.23ex\left\vert\kern-0.23ex\left\vert #1
		\right\vert\kern-0.23ex\right\vert\kern-0.23ex\right\vert}}
\renewcommand\abstractname{Abstract}

\renewcommand\figurename{Figure}
\renewcommand\tablename{Table}

\normalsize

\renewcommand\refname{References}

\def\vT{\vartheta}

\def\no{{\nonumber}}

\def\dx{\delta^{\mathrm c}}
\def\dxt{\delta_\tau^{\mathrm c}}
\def\dpred{\delta^{\mathrm p}}
\def\dtpred{\delta_\tau^{\mathrm p}}

\newcommand{\mNQ}{\mathbf{N}_{h}}
\newcommand{\mNu}{\mathcal{N}_{h}}
\newcommand{\mNQz}{\mathbf{N}}
\newcommand{\mNuz}{\mathcal{N}}
\newcommand{\muQ}{\mathbf{H}_h}

 \newcommand{\eaa}{\dtpred\be_{\mathbf{Q}}^{n,*}}
\newcommand{\eba}{\dtpred e_{u}^{n,*}}

\newcommand{\mLQ}{\mathcal{L}_{h}}
\newcommand{\mLu}{\mathcal{D}_{h}}

\newcommand{\id}[2]{\left\langle #1,#2\right\rangle_h}
\newcommand{\ia}[1]{ \norm{#1 }_{h}^2}

\newcommand{\ilQ}[1]{\norm{#1 }_{\mathcal{L}_h}^2}
\newcommand{\ilu}[1]{\norm{#1 }_{\mathcal{D}_h}^2}

\newcommand{\iaQ}[1]{\norm{#1 }^2_{\psi_{\mathcal{L}}}}
\newcommand{\ibQ}[1]{\norm{#1 }^2_{\psi_{\mathcal{L}}^1}}
\newcommand{\icQ}[1]{\norm{#1 }^2_{\psi_{\mathcal{L}}^2}}
\newcommand{\iau}[1]{\norm{#1 }^2_{\psi_{\mathcal{D}}}}
\newcommand{\ibu}[1]{\norm{#1 }^2_{\psi_{\mathcal{D}}^1}}

 \newcommand{\mQ}{\psi(\tau \mLQ)}

\newcommand{\idQ}[1]{\norm{#1 }^2_{\psi_{\mathcal{L}}^3}}

\newcommand{\be}{\mathbf{e}}

\newcommand{\ep}{\epsilon}

\newcommand{\bM}{\mathbf{M}}

 \newcommand{\ea}{\delta_\tau \be_{\mathbf{Q}}^{n+1}}
\newcommand{\eb}{\delta_\tau e_{u}^{n+1}}
\newcommand{\ec}{\delta_\tau e_{s}^{n+1}}

 \newcommand{\da}{\delta_\tau \mathbf{Q}(t_{n+1})}
 \newcommand{\db}{\delta_\tau u(t_{n+1})}
\def\ba{\mathbf{a}}

\def\A{\alpha}
\def\B{\beta}
\def\C{\gamma}

\def\bu{u}

\def\bQ{\mathbf{Q}_h}
\def\bu{u_h}

\newcommand{\mq}{\psi}

\def\ma{\mathcal{A}}

\def\ts{{s}_h}
\def\bX{\mathbf{X}}

 \def\ba{\begin{equation}\begin{aligned}}
 \def\ed{\end{aligned}\end{equation}}

\begin{document}
\renewcommand{\thefootnote}{\fnsymbol{footnote}}
\title{Energy stability and error estimates for a second-order structure-preserving exponential integrator method for smectic-A liquid crystals}

\author{Wenshuai Hu\thanks{Laboratory of Mathematics and Complex Systems, Ministry of Education and School of Mathematical Sciences,
        Beijing Normal University, Beijing 100875, China.\newline
        \texttt{202431130052@mail.bnu.edu.cn}}
    \and
    Guanghua Ji\thanks{Laboratory of Mathematics and Complex Systems, Ministry of Education and School of Mathematical Sciences,
        Beijing Normal University, Beijing 100875, China.\newline
        \texttt{ghji@bnu.edu.cn}; corresponding author}
    \and Xiao Li\thanks{Laboratory of Mathematics and Complex Systems, Ministry of Education and
    School of Mathematical Sciences, Beijing Normal University, Beijing 100875, China.\newline
        \texttt{lixiao@bnu.edu.cn}}
}

\maketitle
\begin{abstract}
In this work, we develop a second-order, linear, decoupled, and
structure-preserving numerical scheme for the modified Landau--de Gennes
model of smectic-A (SmA) liquid crystals. The main contributions are
threefold.
First, to the best of our knowledge, we propose the first integration of the generalized scalar auxiliary variable (GSAV) approach with a second-order exponential time-differencing Runge--Kutta (ETDRK2) discretization, leading to a second-order GSAV--ETD2 scheme.
Second, we prove that the proposed scheme satisfies an unconditional energy-dissipation law, thereby closing the theoretical gap in the energy-stability analysis of second-order GSAV exponential integrators of this class.
Third, by deriving a coercive discrete reformulation, we establish a fully discrete error estimate without imposing any coupling condition between $\tau$ and $h$, achieving the optimal convergence rate $\mathcal{O}(\tau^2+h^2)$.
   Numerical experiments  are presented to verify our theoretical
results  and to simulate  the self-assembly dynamics of the SmA phase.
\end{abstract}

\begin{keywords}
    Smectic-A liquid crystals; $\mathbf{Q}$-tensor model; scalar auxiliary variable; exponential time differencing; energy stability; error estimates.
\end{keywords}

\pagestyle{myheadings}
\thispagestyle{plain}
\markboth{W. Hu,
    G. Ji \and X. Li}{GSAV-ETD2 scheme for Smectic-A liquid crystals}
\section{Introduction}
Liquid crystals are mesophases that exhibit properties intermediate between those of conventional solids and liquids 
\cite{han2015microscopic,liu2007dynamics}.
Depending on the degree of orientational and positional order, they can form nematic, cholesteric, and smectic phases
\cite{ball2011orientability,wang2021modelling,biscari2007landau}.
Among smectic phases, Smectic-A (SmA) liquid crystals are characterized by a dual-order structure: the molecules form one-dimensional density layers, and the orientational director is aligned with the layer normal \cite{xia2021structural,xia2023variational,E1997modeling}. Although the classical Landau-de Gennes (LdG) theory has been highly successful in describing isotropic-nematic transitions and nematic defect structures \cite{fei2018isotropic,majumdar2010equilibrium}, it does not directly capture the positional ordering that is essential for the SmA phase \cite{shi2025modified}. The modified Landau--de Gennes (mLdG) theory addresses this limitation by coupling the orientational Q-tensor with a scalar positional order parameter \cite{Ball03052015,xia2021structural,xia2024simple}, thereby providing a thermodynamically consistent framework for the isotropic-nematic-smectic transition and resulting defect dynamics \cite{xia2023variational,shi2025modified}.

For the SmA liquid crystal system considered in this paper, the mLdG energy proposed by Xia et al. \cite{xia2021structural,xia2023variational} is given by
\begin{equation}
    E[\mathbf{Q}, u]
    = \int_{\Omega}
    \left(
    \frac{K}{2}|\nabla\mathbf Q|^2+
    f_{\mathrm{bn}}(\mathbf Q)+
    f_{\mathrm{bs}}(u)
    + f_{\mathrm{int}}(\mathbf Q,u)
    \right)\,\mathrm d\mathbf x,
\end{equation}
where $\Omega\subset\mathbb{R}^d$ ($d=2,3$) is a  bounded domain, $\mathbf Q$ is a symmetric and traceless $d\times d$ tensor order
parameter, and $u$ is a scalar positional order parameter.
Here,
$|\nabla \mathbf{Q}|^2=\sum_{i,j,k}(\partial_i Q_{jk})^2$ and $K>0$ is the elastic coefficient.

The bulk energy densities $f_{\mathrm{bn}}(\mathbf Q)$ and $f_{\mathrm{bs}}(u)$
from the LdG theory \cite{deGennes1974} and the Landau phase transition theory \cite{pevnyi2014modeling,izzo2020landau}, respectively,
are given by
\begin{align}
f_{\mathrm{bn}}(\mathbf Q)
&=
\frac{\A}{2}\operatorname{tr}(\mathbf Q^2)
-\frac{\B}{3}\operatorname{tr}(\mathbf Q^3)
+\frac{\C}{4}\bigl(\operatorname{tr}(\mathbf Q^2)\bigr)^2,
\\
f_{\mathrm{bs}}(u)
&=
\frac{a}{2}u^2+\frac{b}{3}u^3+\frac{c}{4}u^4,
\end{align}
where $\operatorname{tr}(\Phi)=\sum_i\Phi_{ii}$ for any
$\Phi\in\mathbb{R}^{d\times d}$.
The constants $\A$ and $a$
are temperature-dependent coefficients, while $\B\ge0$,
$\C>0$, $b\in\mathbb R$, and $c>0$ are material-dependent bulk constants, with
$b=0$ for symmetric molecules \cite{pevnyi2014modeling}.

Following the mLdG theory \cite{Ball03052015,shi2025modified,xia2021structural}, the coupling term $f_{\mathrm{int}}(\mathbf{Q},u)$  is given by
%
\begin{small}
    \begin{align*}
    f_{\mathrm{int}}(\mathbf{Q}, u) & = \begin{cases}
                                    B_0 \left| D^2 u + q^2 \left( \frac{\mathbf{Q}}{s_+} + \frac{\mathbf{I}_d}{d} \right) u \right|_{F}^2,                                                                                    & (d=3,\ \A < \frac{\B^2}{27\C})
~\text{or}~
(d=2,\ \A < 0), \\
                                   B_0 |D^2 u|_{F}^2, & \text{otherwise},
                               \end{cases}\\
                   s_+=&\dfrac{\B+\sqrt{\B^2-24\A\C}}{4\C}~(d=3,\ \A<\frac{\B^2}{27\C}),\quad
s_+=\sqrt{-2\A/\C}~(d=2,\ \A<0),
\end{align*}
\end{small}
where $|\Phi|_F=\sqrt{\Phi : \Phi}=\sqrt{\sum_{i,j} \Phi_{ij}^2}$ is the Frobenius norm for any  $\Phi$ in $\mathbb{R}^{d\times d}$,
$B_0 > 0$ is the coupling strength, $q = 2\pi/l$ is the wave number for the smectic layer thickness $l$, $\mathbf{I}_d$ is the $d \times d$ identity matrix, and $D^2 u$  denotes the Hessian matrix of $u$.

When $(d=3,\ \A < \frac{\B^2}{27\C})$
or
$(d=2,\ \A < 0)$, the constrained $L^2$-gradient flow associated with $E[\mathbf{Q},u]$ with periodic boundary conditions   can be written explicitly as
    \begin{equation}\label{eq1_9}
    \begin{aligned}
        \frac{\partial \mathbf{Q}}{\partial t} & = K\Delta\mathbf{Q} -  \A\mathbf{Q} + \B \left( \mathbf{Q}^2 - \frac{\operatorname{tr}(\mathbf{Q}^2)}{d}\mathbf{I}_d \right) - \C \operatorname{tr}(\mathbf{Q}^2)\mathbf{Q}  \\
                                               & \quad -  \frac{2B_0q^2}{s_+}   \left( u D^2 u - \frac{\operatorname{tr}(u D^2 u)}{d}\mathbf{I}_d \right) -  \frac{2B_0q^4}{s_+^2} \mathbf{Q}u^2,                   \\
        \frac{\partial u}{\partial t}          & = - 2B_0\Delta^2 u - au - bu^2 - cu^3  - 2B_0 D^2 u : (q^2\bM )                                                                                                              \\
                                               & \quad - 2B_0 q^2\nabla \cdot \left( \nabla \cdot \left( \bM u \right) \right)  - 2B_0  \left| q^2 \bM \right|_F^2 u,
    \end{aligned}
\end{equation}
where $\bM=\mathbf M(\mathbf Q)
:=\frac{\mathbf{Q}}{s_+} + \frac{\mathbf{I}_d}{d}$.
The decoupled case is recovered by setting $q=0$.

Building upon the LdG energy, the mathematical analysis and numerical simulation of nematic liquid crystals have been extensively studied over the past few decades \cite{izzo2020landau,pevnyi2014modeling,chen1976landau,huangGlobalWellposednessDynamical2015,huang2024errorestimateorderenergy,Ji2020BDF2}.
Recently, the formulation and analysis of the  mLdG theory have been systematically advanced \cite{Ball03052015,xia2021structural}.  Xia et al. \cite{xia2023variational}   established the existence of global minimizers for the mLdG energy  and derived a priori error estimates for its FEM scheme in
the decoupled case ($q=0$).
 Furthermore, Shi et al. \cite{shi2025modified} established the existence of global weak solutions to this system and showed analytically that the mLdG model captures the isotropic--nematic--smectic (I--N--S) phase transition as a function of temperature. These results motivate the development of an energy-stable, decoupled numerical scheme for \eqref{eq1_9}.

Energy-stable time discretizations have become a central tool for the numerical simulation of gradient flow systems. Representative approaches include convex splitting \cite{eyre1998unconditionally,baskaran2013convergence}, stabilized implicit-explicit (IMEX) schemes \cite{xu2006stability,shen2010numerical,feng2013stabilized}, discrete gradient methods \cite{du1991numerical,furihata2001stable}, exponential time differencing (ETD) \cite{du2018stabilized,du2019,du2021,liu2025maximum}, invariant energy quadratization (IEQ) \cite{yang2017linearly,xu2019efficient,yang2020convergence}, and scalar auxiliary variable (SAV) methods \cite{jiang2022improving,shen2019new,shen2018scalar}. ETD methods are attractive because they integrate stiff linear operators exactly, while SAV-type methods provide a flexible mechanism for constructing linear and unconditionally energy-stable schemes for nonlinear gradient flows \cite{du2021,shen2019new}.

Motivated by these complementary advantages, Ju et al.~\cite{ju2022generalized} combined the GSAV approach with exponential integration and proposed a GSAV--EI framework for Allen--Cahn-type gradient flows.
However, owing to the $h$-dependent bounds arising from the matrix exponential,
the analysis in \cite{ju2022generalized} was limited to temporal error estimates
with the spatial mesh fixed.  Building on this work, Hu et al.~\cite{hu2026structurepreservinggsavexponentialintegrator}
introduced a coercive discrete reformulation and established a fully discrete
error analysis for GSAV--EI schemes under independent temporal and spatial
refinement. Nevertheless, the extension of the GSAV--EI framework to second-order exponential time-differencing Runge--Kutta discretizations remains unexplored, and the corresponding energy stability and fully discrete error analysis have yet to be established.

In this paper, we develop a second-order, linear, decoupled, and
structure-preserving GSAV exponential time-differencing Runge--Kutta
(GSAV--ETD2) scheme for the mLdG model of SmA liquid crystals
\eqref{eq1_9}.
The main contributions of this work are summarized as follows:
\begin{itemize}
    \item For the first time, we combine the GSAV approach with an ETDRK2 discretization
and develop a linear and decoupled GSAV--ETD2 finite-difference scheme for the
fully coupled mLdG model of SmA liquid crystals.

    \item We establish an unconditional modified discrete energy dissipation law for the proposed scheme, thereby closing the gap in the energy-stability analysis of this class of schemes.


    \item By deriving an equivalent coercive predictor--corrector reformulation of the GSAV--ETD2 scheme, we establish the optimal fully discrete error estimate
$\mathcal{O}(\tau^2+h^2)$ under independent refinement of $\tau$ and $h$,
thereby demonstrating the viability of integrating the GSAV approach with an ETDRK2 discretization.
\end{itemize}

The rest of this paper is organized as follows.
In Section~\ref{se2}, we
construct the fully discrete GSAV-ETD2 finite difference scheme for the SmA model and prove its unconditional modified-energy stability.
In Section~\ref{se3}, we establish optimal fully discrete error estimates by a bootstrap argument based on higher-order regularity bounds for the numerical solution.
Section~\ref{section6} reports convergence tests and two- and
three-dimensional simulations of layer selection, confinement, and defect
dynamics. We conclude with some
perspectives in Section~\ref{section7}.
\section{Fully discrete GSAV-ETD2 scheme and energy stability analysis}\label{se2}
We first introduce some notation.
The space $\mathcal{S}^{(d)}$ is defined as the set of $d \times d$ symmetric and traceless matrices:
\begin{equation}
    \mathcal{S}^{(d)} \overset{\mathrm{def}}{=} \left\{ \bX \in \mathbb{R}^{d \times d} \;\middle|\; \operatorname{tr}(\bX) = 0, \bX^{ij} = \bX^{ji} \in \mathbb{R} \ \forall i,j = 1, \dots, d \right\}.
\end{equation}
The admissible space for $\mathbf{Q}$ is
\begin{equation*}
    \mathbf{H}^1(\Omega, \mathcal{S}^{(d)}) := \left\{ \mathbf{Q}: \Omega \to \mathcal{S}^{(d)} \mid \mathbf{Q}_{ij} \in H^1(\Omega) \text{ for } i,j = 1, \dots, d \right\}.
\end{equation*}
For $\mathbf{Q}\in \mathbf{H}^1(\Omega,\mathcal{S}^{(d)})$, we define its standard Frobenius-type
$L^2$ norm by
\begin{equation*}
    \|\mathbf{Q}\|_{L^2}^2
    :=
    \int_{\Omega} |\mathbf{Q}|_F^2\,d\mathbf{x},\quad
    \|\nabla \mathbf{Q}\|_{L^2}^2
    :=
    \int_{\Omega} |\nabla \mathbf{Q}|^2\,d\mathbf{x}.
\end{equation*}
The admissible space for $u$ is $H^2(\Omega)$.

The $L^2$ gradient-flow equations \eqref{eq1_9} imply the following energy dissipation law for the coupled system:
\begin{equation}\label{eq_dissipation}
    \frac{dE}{dt} = - \int_{\Omega} \left| \frac{\delta E}{\delta \mathbf{Q}} - \lambda \mathbf{I}_d - \nu + \nu^{T} \right|_F^2 d\mathbf{x} - \int_{\Omega} \left| \frac{\delta E}{\delta u} \right|^2 d\mathbf{x} \le 0.
\end{equation}
As a consequence of \eqref{eq_dissipation}, we obtain a priori estimates for system \eqref{eq1_9}.
\begin{lemma}[A priori estimates]\label{lem_continuous_regularity}
  Assume that the initial data
$\mathbf{Q}_0\in\mathbf{H}^1(\Omega,\mathcal{S}^{(d)})$ and
$u_0\in H^2(\Omega)$ satisfy
$E(\mathbf{Q}_0,u_0)<+\infty$. Then the energy dissipation law \eqref{eq_dissipation} and
the coercivity of $E$ imply that the nonlinear energy functional
$E_1[\mathbf{Q},u]$ remains uniformly bounded on $[0,T]$:
    \begin{align*}
          -C_*
    &\le
    E_1[\mathbf{Q},u]
    \coloneqq
    \int_{\Omega}
    2B_0(D^2u:q^2\mathbf{M}u)
    \,d\mathbf{x}
    +
    B_0
    \left\|
    q^2\mathbf{M}u
    \right\|_{L^2}^2
    \\&\qquad\qquad
    +
    \int_{\Omega}
    f_{\mathrm{bn}}(\mathbf{Q})
    +
    f_{\mathrm{bs}}(u)
    -
    \frac \kappa2(|\mathbf{Q}|_F^2 + |u|^2)
    d\mathbf{x}
    \le
    C^*,
    \end{align*}
 where $f_{\mathrm{bn}}(\mathbf{Q})=\frac{\A}{2}\operatorname{tr}(\mathbf{Q}^2)-\frac{\B}{3}\operatorname{tr}(\mathbf{Q}^3) + \frac{\C}{4}(\operatorname{tr}(\mathbf{Q}^2))^2$,
 $\kappa>0$ is a stabilization parameter, and $C_*,C^*>0$ are
constants depending only on the initial energy
$E(\mathbf{Q}_0,u_0)$, the domain $\Omega$, and the model parameters.
\end{lemma}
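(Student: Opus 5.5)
The plan is to integrate the energy dissipation law \eqref{eq_dissipation} in time and then convert the resulting bound on $E$ into two-sided bounds on $E_1$ via coercivity and Sobolev embeddings.

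\emph{Step 1: uniform bound on the total energy.} Integrating \eqref{eq_dissipation} over $[0,t]$ for $t\le T$ gives
\[
E[\mathbf{Q}(t),u(t)]\le E(\mathbf{Q}_0,u_0).
\]

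\emph{Step 2: coercivity of $E$.} Since $\C>0$ and $c>0$, the quartic terms dominate. Young's inequality gives
\[
f_{\mathrm{bn}}(\mathbf{Q})\ge \tfrac{\C}{8}|\mathbf{Q}|_F^4-C_1,
\qquad
f_{\mathrm{bs}}(u)\ge \tfrac{c}{8}u^4-C_2 .
\]
Hence
\[
\tfrac K2\|\nabla\mathbf{Q}\|_{L^2}^2+\tfrac{\C}{8}\|\mathbf{Q}\|_{L^4}^4+\tfrac c8\|u\|_{L^4}^4+B_0\|D^2u+q^2\mathbf{M}u\|_{L^2}^2\le E(\mathbf{Q}_0,u_0)+C|\Omega| .
\]
This controls $\mathbf{Q}$ in $H^1\cap L^4$ and $u$ in $L^4$, hence in $L^2$.

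\emph{Step 3: control of $\|u\|_{H^2}$.} We expand $|D^2u+q^2\mathbf{M}u|_F^2$ and observe that
\[
E=\tfrac K2\|\nabla\mathbf{Q}\|_{L^2}^2+B_0\|D^2u\|_{L^2}^2+\tfrac\kappa2(\|\mathbf{Q}\|^2_{L^2}+\|u\|^2_{L^2})+E_1 .
\]
To bound $\|D^2u\|_{L^2}$ we use
\[
\|D^2u\|\le\|D^2u+q^2\mathbf{M}u\|+q^2\|\mathbf{M}u\|
\quad\text{and}\quad
\|\mathbf{M}u\|_{L^2}\le C(\|\mathbf{Q}\|_{L^4}\|u\|_{L^4}+\|u\|_{L^2}),
\]
both of which are bounded by Step 2. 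Since periodic boundary conditions give $\|\nabla u\|^2\le\|u\|\|\Delta u\|$, this yields a uniform bound on $\|u\|_{H^2}$. Combined with Step 2, we obtain uniform bounds on $\|\mathbf{Q}\|_{H^1}$ and $\|u\|_{H^2}$ on $[0,T]$.

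\emph{Step 4: two-sided bound on $E_1$.} The upper bound follows by estimating each term of $E_1$ directly with these norms:
\[
\Bigl|\int 2B_0 D^2u:q^2\mathbf{M}u\Bigr|\le C\|D^2u\|\,\|\mathbf{M}u\|,
\qquad
\int|\operatorname{tr}\mathbf{Q}^3|\le\|\mathbf{Q}\|_{L^3}^3\le C\|\mathbf{Q}\|_{H^1}^3,
\]
where the last step uses $H^1\hookrightarrow L^6$ for $d\le3$; the polynomial terms in $u$ are handled by $H^2\hookrightarrow L^\infty$. The lower bound can be obtained either from the same absolute estimates or from the identity in Step 3, since the remaining terms there are bounded by $C$. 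Either way, $C_*$ and $C^*$ depend only on $E(\mathbf{Q}_0,u_0)$, $\Omega$, the model parameters and $\kappa$.

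\emph{Main obstacle.} The delicate point is Step 3: the coupling term can in principle cancel $D^2u$. This is resolved only through the a priori $L^4$ control of both $\mathbf{Q}$ and $u$, which bounds the product $\mathbf{Q}u$ in $L^2$. Rigorously justifying \eqref{eq_dissipation} for weak solutions, via the results of \cite{shi2025modified}, is assumed rather than proved.
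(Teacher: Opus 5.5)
Your proposal is correct and takes essentially the same route as the paper, which only cites the standard argument (energy dissipation plus coercivity of $E$) and defers the details to its companion work. Your Steps 1--4 supply exactly those details, including the key point of recovering $\|D^2u\|_{L^2}$ from $\|D^2u+q^2\mathbf{M}u\|_{L^2}$ via the $L^4$ bounds on $\mathbf{Q}$ and $u$; the upper bound is also immediate from your Step 3 identity, $E_1\le E[\mathbf{Q}(t),u(t)]\le E(\mathbf{Q}_0,u_0)$, since the subtracted terms are nonnegative.
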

\begin{proof}
The result follows from a standard argument based on the energy dissipation law \eqref{eq_dissipation}; see \cite{hu2026structurepreservinggsavexponentialintegrator} for details.
\end{proof}

Let $\zeta\in C^1(\mathbb{R};(0,\infty))$ satisfy $\zeta'\ge 0$ on
$\mathbb{R}$. For a fixed stabilization parameter $\kappa>0$, we introduce the
scalar auxiliary variable
$s(t)\coloneqq E_1[\mathbf Q(t),u(t)]$
and define the modified energy and the function $g$ by
\begin{align}
\mathcal E[\mathbf Q,u,s]
&\coloneqq
\frac{K}{2}\|\nabla\mathbf Q\|_{L^2}^2
+
B_0\|\Delta u\|_{L^2}^2
+
\frac{\kappa}{2}
\left(
\|u\|_{L^2}^2
+
\|\mathbf Q\|_{L^2}^2
\right)
+s,
\\
g(\mathbf Q,u,s)
&\coloneqq
\frac{\zeta(s)}
{\zeta\bigl(E_1[\mathbf Q,u]\bigr)}.
\end{align}
Here $\|\Delta u\|_{L^2}^2=\|D^2u\|_{L^2}^2$ under periodic boundary
conditions.

For notational convenience, we define the linear operators and the
associated nonlinear terms at time $t$ as follows:
\ba
\mathcal L\mathbf Q(t)
&\coloneqq
-K\Delta\mathbf Q(t)+\kappa\mathbf Q(t),
&\qquad
\mNQz(t)
&\coloneqq
\mNQz\bigl(\mathbf Q(t),u(t)\bigr)
=
g(t)\mathbf H(t)
,
\\
\mathcal D u(t)
&\coloneqq
2B_0\Delta^2u(t)+\kappa u(t),
&\qquad
\mNuz(t)
&\coloneqq
\mNuz\bigl(\mathbf Q(t),u(t)\bigr)
=
g(t)\mu(t),
\ed
where $\mathbf H(\mathbf Q,u)$ and $\mu(\mathbf Q,u)$ are defined by
\ba\label{eqd1}
\mathbf H(\mathbf Q,u)
&:=
\kappa\mathbf Q-
\A\mathbf Q
+\B\left(
\mathbf Q^2-
\frac{\operatorname{tr}(\mathbf Q^2)}{d}\mathbf I_d
\right)
-\C\operatorname{tr}(\mathbf Q^2)\mathbf Q
-2B_0q^4\frac{\mathbf Q}{s_+^2}u^2
\\
&\quad-\frac{2B_0q^2}{s_+}
\left(
uD^2u-
\frac{\operatorname{tr}(uD^2u)}{d}\mathbf I_d
\right)
= \mathbf H_{\rm p}(\mathbf Q,u)+\mathbf H_{\rm d}(u),
\\[2pt]
\mu(\mathbf Q,u)
&:=
\kappa u-
a u-bu^2-cu^3
-2B_0|q^2\mathbf M|_F^2u
\\
&\quad-2B_0D^2u:(q^2\mathbf M)
-2B_0q^2\nabla\cdot
\left(\nabla\cdot(\mathbf M u)\right)
= \mu_{\rm p}(\mathbf Q,u)+\mu_{\rm d}(\mathbf Q,u).
\ed
Here, the subscripts $\mathrm p$ and $\mathrm d$ denote the derivative-free
and derivative-dependent contributions, respectively.

The system \eqref{eq1_9} can then be equivalently reformulated in terms of
$(\mathbf Q,u,s)$ as follows:
\begin{align}
\frac{\partial \mathbf{Q}}{\partial t}
&=
-\mathcal L\mathbf Q
+\mNQz,\label{eq1_9_continuous_1}
\\
\frac{\partial u}{\partial t}
&=
-\mathcal D u
+\mNuz,\label{eq1_9_continuous_2}
\\
\frac{d s}{dt}
&=-
\int_{\Omega}
\mNQz :\frac{\partial\mathbf{Q}}{\partial t}
\,d\mathbf{x}
-
\int_{\Omega}
\mNuz\frac{\partial u}{\partial t}
\,d\mathbf{x}.
\end{align}

\subsection{Spatial Discretization and Discrete Function Spaces}
For definiteness, we present the spatial discretization of
\eqref{eq1_9} in three dimensions on the periodic box
$\Omega=[0,L_d]^3$; the two-dimensional construction follows by
omitting the $z$-direction terms. Given a positive integer $J$, the
uniform mesh size in each spatial direction is $h=L_d/J$. All variables
are stored at the primary mesh points. We denote by $\mathbf{E}$ the set
of mesh points, defined by
\begin{equation*}
    \mathbf{E} = \left\{ (x_p, y_q, z_r) = (ph, qh, rh) \mid p, q, r = 0, 1, \dots, J \right\}.
\end{equation*}
The corresponding periodic grid function space is given by
\begin{equation*}
    E_h^{\text{per}} \!= \{ U : \mathbf{E} \to \mathbb{R} \mid U_{0,q,r} \!= U_{J,q,r}, \, U_{p,0,r} = U_{p,J,r}, \, U_{p,q,0} \!= U_{p,q,J}, 0 \le p, q, r \le J \}.
\end{equation*}
For ease of notation, we handle the periodic boundary conditions by assigning the values of the corresponding interior nodes to the ghost nodes. Specifically, for any $U \in E_h^{\text{per}}$, we have
\begin{equation*}
    U_{-1,q,r} = U_{J-1,q,r}, \quad U_{J+1,q,r} = U_{1,q,r}, \quad \forall 0 \le q, r \le J.
\end{equation*}
Analogous periodic extensions apply to the $q$ and $r$ directions.

Then, we define the forward, backward, and central difference operators for the grid function $U \in E_h^{\text{per}}$ along the $x$-direction as follows:
    \begin{align*}
    (D_1^+ U)_{p,q,r}\! =\! \frac{U_{p+1,q,r} \!-\! U_{p,q,r}}{h},
    (D_1^- U)_{p,q,r} \!= \!\frac{U_{p,q,r} \!- \!U_{p-1,q,r}}{h},
    D_1^c U\!=\!\frac{1}{2}(D_1^+ U \!+\! D_1^- U\!).
\end{align*}
where the difference operators $D_2^{\pm,c}$ and $D_3^{\pm,c}$ along the $y$- and $z$-directions are defined analogously. Moreover, we denote the mixed central difference as $D_{k,l}^c U = D_k^c D_l^c U$ for $k,l \in \{1,2,3\}$.

In a collocated grid, to maintain a compact stencil for the Laplacian, we define the discrete gradient operator $\nabla_h : E_h^{\text{per}} \to (E_h^{\text{per}})^3$ using forward differences:
\begin{equation*}
    (\nabla_h U)_{p,q,r} = ( (D_1^+ U)_{p,q,r}, (D_2^+ U)_{p,q,r}, (D_3^+ U)_{p,q,r} )^T,
\end{equation*}
and the corresponding discrete divergence operator $\nabla_h \cdot : (E_h^{\text{per}})^3 \to E_h^{\text{per}}$ using backward differences:
\begin{equation*}
    \nabla_h \cdot (U^{(1)}, U^{(2)}, U^{(3)})^T = D_1^- U^{(1)} + D_2^- U^{(2)} + D_3^- U^{(3)}.
\end{equation*}
In addition, for any scalar-valued grid function
$U\in E_h^{\mathrm{per}}$, we define its discrete Hessian as the
matrix-valued grid function
\begin{equation*}
    D_h^2U
    :=
    \left(D_{i,j}^cU\right)_{i,j=1}^3,
    \qquad
    D_{i,j}^cU:=D_i^cD_j^cU.
\end{equation*}
Because all variables are collocated, the discrete inner products no longer require averaging operators. They are simply given by:
\begin{align*}
    \langle U, V \rangle_h &= h^3 \sum_{p,q,r=1}^{J} U_{p,q,r} V_{p,q,r} \quad \forall U, V \in E_h^{\text{per}},\\
    [\mathbf{U}, \mathbf{V}]_h &= \sum_{k=1}^3 \langle U^{(k)}, V^{(k)} \rangle_h \quad \forall \mathbf{U}, \mathbf{V} \in (E_h^{\text{per}})^3.
\end{align*}
Then, for any $U \in E_h^{\text{per}}$, the corresponding discrete
$L^2$, $L^\infty$, $H^1$, and $H^2$ norms are defined by
    \begin{align*}
    \|U\|_h^2       & = \langle U, U \rangle_h, \quad
    \|\nabla_h U\|_h^2 = [\nabla_h U, \nabla_h U]_h ,\quad
        \|U\|_{H_h^1}^2  = \|U\|_h^2 + \|\nabla_h U\|_h^2, \\
    \|U\|_{H_h^2}^2 &= \!\|U\|_{H_h^1}^2 + \|\Delta_h U\|_h^2,
    \|U\|_{H_h^4}^2 \!= \!\|U\|_{H_h^2}^2 + \|\Delta_h^2 U\|_h^2,
    \|U\|_\infty \!= \!\max_{0 \le p,q,r \le J} |U_{p,q,r}|.
\end{align*}
The discrete Laplacian is defined as $\Delta_h U = \sum_{k=1}^3 D_k^+ D_k^- U$, and the discrete biharmonic operator is defined as $\Delta_h^2 U = \Delta_h (\Delta_h U)$.
Using summation-by-parts on the collocated grid,  for any $U, V \in E_h^{\text{per}}$, we have
\begin{equation*}
    \begin{aligned}
        \langle D_k^- D_k^+ U, V \rangle_h & = - \langle D_k^+ U, D_k^+ V \rangle_h = \langle U, D_k^- D_k^+ V \rangle_h,                                      \\
        \langle D_{k,l}^c U, V \rangle_h   & = - \langle D_l^c U, D_k^c V \rangle_h = - \langle D_k^c U, D_l^c V \rangle_h = \langle U, D_{k,l}^c V \rangle_h,
    \end{aligned}
\end{equation*}
which also implies that
\begin{equation}\label{eq_c1}
    \langle -\Delta_h U, V \rangle_h =  [\nabla_h U, \nabla_h V]_h = \langle U, -\Delta_h V \rangle_h,\quad
    \langle \Delta_h^2 U, V \rangle_h = \langle U, \Delta_h^2 V \rangle_h.
\end{equation}

For tensor-valued grid functions, we first introduce the general
matrix-valued grid function space
\begin{equation*}
    \mathbb{M}_h^{(3)}
    :=
    \left\{
    \bm{\Phi}_h=(\Phi_h^{ij})_{i,j=1}^3
    \ \middle|\
    \Phi_h^{ij}\in E_h^{\mathrm{per}},
    \quad i,j=1,2,3
    \right\}.
\end{equation*}
The discrete function space for symmetric and traceless
$\mathbf{Q}$-tensor fields is then defined as
\begin{equation*}
    \mathcal{S}_h^{(3)}
    :=
    \left\{
    \bm{\Phi}_h\in\mathbb{M}_h^{(3)}
    \ \middle|\
    \bm{\Phi}_h^{T}=\bm{\Phi}_h,\quad
    \operatorname{tr}(\bm{\Phi}_h)=0
    \right\}.
\end{equation*}
The discrete Frobenius product  and the discrete $L^4$ norm for any $\bm{\Phi}_h, \bm{\Psi}_h \in  \mathbb{M}_h^{(3)}$ are defined as follows:
\begin{equation*}
    \langle \bm{\Phi}_h, \bm{\Psi}_h \rangle_h =  \sum_{i,j=1}^3 \langle {\Phi}_h^{i,j}, {\Psi}_h^{i,j} \rangle_h,\quad  \|\bm{\Phi}_h\|_{L_h^4}^4 = \big\langle |\bm{\Phi}_h|_F^2, |\bm{\Phi}_h|_F^2  \big\rangle_h.
\end{equation*}
For any $\bm{\Phi}_h \in  \mathbb{M}_h^{(3)}$, the discrete $L^2$,
$L^\infty$, $H^1$, and $H^2$ norms are defined by
\begin{equation*}
\begin{aligned}
\|\bm{\Phi}_h\|_h^2 &:= {\sum_{i,j=1}^3 \| {\Phi}_h^{i,j}\|_h^2}, \quad
\|\nabla_h \bm{\Phi}_h\|_h^2 := {\sum_{i,j=1}^3 \| \nabla_h{\Phi}_h^{i,j}\|_h^2},\\ \|\Delta_h \bm{\Phi}_h\|_h^2 &:= {\sum_{i,j=1}^3 \|\Delta_h {\Phi}_h^{i,j}\|_h^2},\quad
\|\bm{\Phi}_h\|_\infty := \max_{0 \le p,q,r \le J} |(\bm{\Phi}_{h})_{p,q,r}|_F    , \\
\|\bm{\Phi}_h\|_{H_h^1}^2 &:= {\|\bm{\Phi}_h\|_h^2 + \|\nabla_h \bm{\Phi}_h\|_h^2}
, \quad
\|\bm{\Phi}_h\|_{H_h^2}^2 := {\|\bm{\Phi}_h\|_{H_h^1}^2 + \|\Delta_h \bm{\Phi}_h\|_h^2}.
\end{aligned}
\end{equation*}

For any $\bm{\Phi}_h\in\mathbb M_h^{(3)}$, we define the discrete
central double-divergence operator below, which is the discrete adjoint of $D_h^2$:
\begin{equation}\label{eq_discrete_hessian_adjoint}
\begin{aligned}
\operatorname{div}_{h,c}^{\,2}\bm{\Phi}_h
:=
\sum_{i,j=1}^3D_i^cD_j^c\Phi_h^{ij},\quad
\left\langle D_h^2U,\bm{\Phi}_h\right\rangle_h
=
\left\langle
U,\operatorname{div}_{h,c}^{\,2}\bm{\Phi}_h
\right\rangle_h,
\qquad U\in E_h^{\mathrm{per}}.
\end{aligned}
\end{equation}
Moreover, by the discrete Fourier transform and Parseval's identity,
we have
\ba\label{eq_discrete_hessian_bound}
    \|D_h^2U\|_h
    \le
    \|\Delta_hU\|_h,\quad  \norm{ \operatorname{div}_{h,c}^{\,2}\bm{\Phi}_h }_h^2 \le  \norm{ \Delta_h \bm{\Phi}_h }_h^2,
    \qquad U\in E_h^{\mathrm{per}},~\bm{\Phi}_h\in\mathbb M_h^{(3)}.
\ed
Finally, we denote by
$\mathbf H_h^2(\Omega,\mathbb M_h^{(3)})$ and $H_h^4(\Omega)$
the spaces $\mathbb M_h^{(3)}$ and $E_h^{\mathrm{per}}$ endowed with
the discrete $H_h^2$- and $H_h^4$-norms, respectively. Moreover, we set
$\mathbf H_h^2(\Omega,\mathcal S_h^{(3)})
:=
\mathbf H_h^2(\Omega,\mathbb M_h^{(3)})
\cap\mathcal S_h^{(3)}.$

The two-dimensional discretization is obtained analogously by
omitting the $z$-direction and restricting all spatial and tensor
indices to $\{1,2\}$, with the discrete operators and norms understood
accordingly. Hereafter, $\mathbb{M}_h^{(d)}$ and
$\mathcal{S}_h^{(d)}$ denote the corresponding spaces for
$d\in\{2,3\}$.
\subsection{Fully discrete GSAV-ETD2 scheme}
Let $\tau > 0$ be the uniform time step size and $t_n = n\tau$ for $n \ge 0$.
Let $\mathbf{Q}_h$, $u_h$, and $s_h$ denote the fully discrete numerical approximations of the exact solutions $\mathbf{Q}(t)$, $u(t)$, and $s(t)$, respectively.
The discrete counterparts of $E_1$ and $g$ are defined as follows:
\begin{equation}\label{eq_s_ex_n}
    \begin{aligned}
        E_{1h}[\mathbf Q_h,u_h]
       & : = 2 B_0 q^2 \langle D_h^2 u_h, \bM_h u_h \rangle_h + B_0 q^4 \| \bM_h  u_h  \|_h^2 + \langle f_{\mathrm{bn}}(\mathbf{Q}_h ), 1 \rangle_h \\&  + \langle f_{\mathrm{bs}}(u_h ), 1 \rangle_h-\frac \kappa2(\|u_h \|_h^2 + \|\mathbf{Q}_h \|_h^2),\\
         g_h(\mathbf Q_h, u_h, s_h) &:=  \zeta(s_h)/\zeta(E_{1h}[\mathbf Q_h,u_h]),
    \end{aligned}
\end{equation}
where $\bM_h(\mathbf Q_h) = \frac{\mathbf{Q}_h}{s_+} + \frac{\mathbf{I}_d}{d}$. The subscript $h$ in $s_h^n$ and $g_h^n$ emphasizes their dependence
on the spatial discretization.


We next define the discrete linear operators
$\mLQ:\mathcal S_h^{(d)}\to\mathcal S_h^{(d)}$ and
$\mLu:E_h^{\mathrm{per}}\to E_h^{\mathrm{per}}$ by
\begin{align} \label{eq_L}
    \mLQ\mathbf{Q}_h = -K\Delta_h \mathbf{Q}_h +  \kappa \mathbf{Q}_h,\quad
    \mLu u_h = 2B_0\Delta_h^2 u_h +  \kappa  u_h.
\end{align}
The nonlinear operators $\mNQ: \mathcal{S}_h^{(d)} \times E_h^{\text{per}} \times \mathbb{R} \to \mathcal{S}_h^{(d)}$ and $\mNu: \mathcal{S}_h^{(d)} \times E_h^{\text{per}} \times \mathbb{R} \to E_h^{\text{per}}$ are defined by
\ba \label{eq_N_Q_def}
\mathbf{N}_{h}(\mathbf{Q}_h, u_h, s_h)  = g_h \muQ, \quad
\mathcal{N}_{h}(\mathbf{Q}_h, u_h, s_h)  = g_h \mu_{h},
\ed
where $\muQ$ and $\mu_{h}$ are defined as
    \ba\label{eqc8}
\muQ(\mathbf{Q}_h,u_h)
&:=
\kappa\mathbf Q_h-\A\mathbf Q_h
+\B\left(
\mathbf Q_h^2
-\frac{\operatorname{tr}(\mathbf Q_h^2)}{d}\mathbf I_d
\right)
-\C\operatorname{tr}(\mathbf Q_h^2)\mathbf Q_h
-\frac{2B_0q^4}{s_+^2}\mathbf Q_hu_h^2
\\
&\quad-
\frac{2B_0q^2}{s_+}
\left(
u_hD_h^2u_h
-
\frac{\operatorname{tr}(u_hD_h^2u_h)}{d}\mathbf{I}_d
\right)
=
\mathbf H_{\rm p}(\mathbf Q_h,u_h)
+\mathbf H_{{\rm d},h}(u_h),
\\
\mu_h(\mathbf{Q}_h,u_h)
&:=
\kappa u_h-au_h-bu_h^2-cu_h^3
-2B_0\lvert q^2\mathbf M_h\rvert_F^2u_h
\\
&\quad-
2B_0q^2\left(
\mathbf M_h:D_h^2u_h
+\operatorname{div}_{h,c}^{\,2}(\mathbf M_hu_h)
\right)=
\mu_{\rm p}(\mathbf Q_h,u_h)
+\mu_{{\rm d},h}(\mathbf Q_h,u_h).
\ed

 The discrete initial values are chosen as
\begin{equation*}
    \mathbf{Q}_h^0=\mathcal{I}_h\mathbf{Q}(0),
    \qquad
    u_h^0=\mathcal{I}_h u(0),
    \qquad
    s_h^0=E_{1h}^0,
\end{equation*}
where $\mathcal I_h$ denotes the standard grid-restriction operator.
We use the standard $\varphi$-functions
\ba \label{eq3a}
\varphi_1(z)
&\coloneqq
\int_0^1 e^{-\varsigma z}\,d\varsigma
=
\frac{1-e^{-z}}{z},
\\
\varphi_2(z)
&\coloneqq
\int_0^1(1-\varsigma)e^{-\varsigma z}\,d\varsigma
=
\frac{e^{-z}-1+z}{z^2}.
\ed
For brevity, for any grid function $\phi$
(in particular, $\phi\in\{\bQ,\bu,\ts\}$), we introduce the notation
\begin{align}
\bigl(
\delta^{\mathrm p}\phi^{n,*},
\delta^{\mathrm c}\phi^{n+1},
\delta\phi^{n+1}
\bigr)
&:=
\bigl(
\phi^{n,*}-\phi^n,
\phi^{n+1}-\phi^{n,*},
\phi^{n+1}-\phi^n
\bigr).
\label{eq:time_increments}\\
\bigl(
\delta_\tau^{\mathrm p}\phi^{n,*},
\delta_\tau^{\mathrm c}\phi^{n+1},
\delta_\tau\phi^{n+1}
\bigr)
&:=
\frac{1}{\tau}
\bigl(
\delta^{\mathrm p}\phi^{n,*},
\delta^{\mathrm c}\phi^{n+1},
\delta\phi^{n+1}
\bigr).
\label{eq:time_difference_quotients}
\end{align}

Following \cite{ju2022generalized,hu2026structurepreservinggsavexponentialintegrator},
we introduce a first-order predictor for \eqref{eq1_9}. Given
the numerical solution $(\bQ^n,u_h^n,s_h^n)$, the predicted stage values
$(\bQ^{n,*},u_h^{n,*},s_h^{n,*})$ are computed as follows:
\begin{align}
    \bQ^{n,*} & = e^{-\tau \mLQ} \bQ^{n} + \tau \varphi_1(\tau \mLQ) \mNQ^n, \label{eq4_6}                                                                                       \\
    \bu^{n,*} & = e^{-\tau \mLu} \bu^{n} + \tau \varphi_1(\tau \mLu) \mNu^n,\label{eq4_7}                                                                                        \\
    s_h^{n,*} & = s_h^n -  \left\langle \mNQ^n, \dpred\bQ^{n,*}  \right\rangle_h - \left\langle \mNu^n, \dpred\bu^{n,*}  \right\rangle_h ,\label{eq4_8}
\end{align}
where $\mNQ^n = \mNQ(\bQ^n, u_h^n, s_h^n)$ and $\mNu^n = \mNu(\bQ^n, u_h^n, s_h^n)$.

We then formulate the following GSAV--ETD2 scheme for
\eqref{eq1_9}. For each $n\geq 0$, given the numerical solution
$(\bQ^n,u_h^n,s_h^n)$ and the predicted stage values
$(\bQ^{n,*},u_h^{n,*},s_h^{n,*})$ obtained from
\eqref{eq4_6}--\eqref{eq4_8}, we determine
$(\bQ^{n+1},u_h^{n+1},s_h^{n+1})$ as follows:
\begin{align}
    \bQ^{n+1} & = e^{-\tau \mLQ} \bQ^{n} + \tau \varphi_1\left( \tau \mLQ \right) \mNQ^n + \tau \varphi_2(\tau \mLQ) (\mNQ^{n,*} - \mNQ^n),\label{eq4_9}                                                                                       \\
    \bu^{n+1} & = e^{-\tau \mLu} \bu^{n} + \tau \varphi_1(\tau \mLu) \mNu^n + \tau \varphi_2(\tau \mLu) (\mNu^{n,*} - \mNu^n),\label{eq4_10}                                                                                        \\
   s_h^{n+1} & = s_h^n - \frac{1}{2} \left( \left\langle (\mNQ^n+\mNQ^{n,*}), \delta \bQ^{n+1} \right\rangle_h + \left\langle (\mNu^n+\mNu^{n,*}), \delta \bu^{n+1} \right\rangle_h \right)     \no \\  &-\frac{1}{4}\langle  \mathcal{L}_h \dx \bQ^{n+1}, \dx \bQ^{n+1} \rangle_h-\frac{1}{4}\langle \mathcal{D}_h \dx \bu^{n+1}, \dx \bu^{n+1} \rangle_h,\label{eq4_12}
\end{align}
where  $\mNQ^{n,*} = \mNQ(\bQ^{n,*}, u_h^{n,*}, s_h^{n,*})$ and $\mNu^{n,*} = \mNu(\bQ^{n,*}, u_h^{n,*}, s_h^{n,*})$.

For later use, we define the following discrete quantities:
\[
E_{1h}^n:=E_{1h}[\bQ^n,u_h^n],\qquad
E_{1h}^{n,*}:=E_{1h}[\bQ^{n,*},u_h^{n,*}],
\qquad
g_h^n:={\zeta(s_h^n)}/{\zeta(E_{1h}^n)},\qquad
g_h^{n,*}:={\zeta(s_h^{n,*})}/{\zeta(E_{1h}^{n,*})}.
\]

\section{Unconditional energy stability analysis}
In this section, we establish the unconditional energy stability of the GSAV-ETD2 scheme \eqref{eq4_9}-\eqref{eq4_12}.
The key analytical step is to rewrite the exponential updates in
equivalent quasi-implicit forms.  For $z\ge0$, let
$\bar{\varphi}_j(z):=1/\varphi_j(z)$, $j=1,2$, and introduce the
operator weights
\begin{equation}\label{eq_psi_functions}
\begin{aligned}
\psi(z)&:=\bar{\varphi}_1(z)-z=\frac{z}{e^z-1},
&\qquad
\psi_1(z)&:=\psi(z)+\frac z2
=\frac z2\operatorname{coth}\left(\frac z2\right),\\
\psi_2(z)&:=\bar{\varphi}_2(z)-z
=\frac{\varphi_1(z)}{\varphi_2(z)}.
\end{aligned}
\end{equation}
The values at $z=0$ are understood by continuity.

The following lemma records the equivalent stage and nodal forms used
in the energy stability and error analyses.
\begin{lemma}[Coercive reformulation ]
\label{lem:etd2_reformulation}
Let $\kappa>0$ and $\tau>0$.  The predictor equations
\eqref{eq4_6}--\eqref{eq4_7} are equivalent to
\begin{align}
\mq(\tau\mLQ)\dtpred\bQ^{n,*}
+\mLQ\bQ^{n,*}&=\mNQ^n,\label{eqc3}\\
\mq(\tau\mLu)\dtpred\bu^{n,*}
+\mLu\bu^{n,*}&=\mNu^n.\label{eqc5}
\end{align}
With these predictor relations, the corrector equations
\eqref{eq4_9}--\eqref{eq4_10} are equivalent to the stage form
\begin{align}
\psi_2(\tau\mLQ)\dxt\bQ^{n+1}
+\mq(\tau\mLQ)\dtpred\bQ^{n,*}
+\mLQ\bQ^{n+1}&=\mNQ^{n,*},\label{eqc2}\\
\psi_2(\tau\mLu)\dxt\bu^{n+1}
+\mq(\tau\mLu)\dtpred\bu^{n,*}
+\mLu\bu^{n+1}&=\mNu^{n,*},\label{eqc4}
\end{align}
and, equivalently, to the nodal form
\begin{align}
\mq(\tau\mLQ)\delta_\tau\bQ^{n+1}
+\mLQ\bQ^{n+1}
&=\mNQ^n+\frac{\varphi_2(\tau\mLQ)}{\varphi_1(\tau\mLQ)}
(\mNQ^{n,*}-\mNQ^n),\label{eq:etd2_nodal_Q}\\
\mq(\tau\mLu)\delta_\tau\bu^{n+1}
+\mLu\bu^{n+1}
&=\mNu^n+\frac{\varphi_2(\tau\mLu)}{\varphi_1(\tau\mLu)}
(\mNu^{n,*}-\mNu^n).\label{eq:etd2_nodal_u}
\end{align}
Together with the scalar updates \eqref{eq4_8} and \eqref{eq4_12},
either form defines the same GSAV--ETD2 scheme.
\end{lemma}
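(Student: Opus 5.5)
The proof works entirely in the functional calculus of the discrete operators. Since $\kappa>0$, the operators $\mLQ=-K\Delta_h+\kappa$ and $\mLu=2B_0\Delta_h^2+\kappa$ are self-adjoint and positive definite with respect to $\langle\cdot,\cdot\rangle_h$; this uses \eqref{eq_c1}. Hence $z:=\tau\mLQ$ (respectively $\tau\mLu$) has spectrum in $[\tau\kappa,\infty)\subset(0,\infty)$. Every function $e^{-z}$, $\varphi_j(z)$, $\bar\varphi_j(z)$, $\psi(z)$, $\psi_2(z)$ is then a well-defined self-adjoint operator, and all of them commute with each other and with $\mLQ$. Moreover $\varphi_1,\varphi_2>0$ on $(0,\infty)$, so $\varphi_1(z)$ and $\varphi_2(z)$ are invertible, with inverses $\bar\varphi_1(z)$ and $\bar\varphi_2(z)$. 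This invertibility makes every manipulation below reversible, which gives the claimed equivalences rather than mere implications. I carry out the argument for $\mathbf Q_h$ only; the $u_h$ case is identical with $\mLu$, $\mNu$ in place of $\mLQ$, $\mNQ$.

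\emph{Predictor.} Use the identity $1-e^{-z}=z\varphi_1(z)$ in \eqref{eq4_6}. This gives
\[
\dpred\bQ^{n,*}=-z\varphi_1(z)\bQ^n+\tau\varphi_1(z)\mNQ^n .
\]
Applying $\bar\varphi_1(z)/\tau$ yields $\bar\varphi_1(z)\dtpred\bQ^{n,*}=-\mLQ\bQ^n+\mNQ^n$. Next write $-\mLQ\bQ^n=-\mLQ\bQ^{n,*}+z\,\dtpred\bQ^{n,*}$. Moving this term to the left produces $(\bar\varphi_1(z)-z)\dtpred\bQ^{n,*}+\mLQ\bQ^{n,*}=\mNQ^n$, which is \eqref{eqc3} because $\psi=\bar\varphi_1-z$ by \eqref{eq_psi_functions}. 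A short check confirms $\psi(z)=z/(e^z-1)$: indeed $\bar\varphi_1(z)=ze^z/(e^z-1)$. Each step is reversible, since $\varphi_1(z)$ is invertible.

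\emph{Stage form.} Subtracting \eqref{eq4_6} from \eqref{eq4_9} gives $\dx\bQ^{n+1}=\tau\varphi_2(z)(\mNQ^{n,*}-\mNQ^n)$, that is,
\[
\bar\varphi_2(z)\dxt\bQ^{n+1}=\mNQ^{n,*}-\mNQ^n .
\]
I substitute $\mNQ^n$ from \eqref{eqc3} and use $\mLQ\bQ^{n,*}=\mLQ\bQ^{n+1}-z\,\dxt\bQ^{n+1}$. The result is $(\bar\varphi_2(z)-z)\dxt\bQ^{n+1}+\psi(z)\dtpred\bQ^{n,*}+\mLQ\bQ^{n+1}=\mNQ^{n,*}$, which is \eqref{eqc2} with $\psi_2=\bar\varphi_2-z$. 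To go back, assume \eqref{eqc3} and reverse the substitution; this recovers $\bar\varphi_2\dxt\bQ^{n+1}=\mNQ^{n,*}-\mNQ^n$ and hence \eqref{eq4_9}.

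\emph{Nodal form.} Subtract $\bQ^n$ from \eqref{eq4_9} directly. This gives
\[
\delta\bQ^{n+1}=-z\varphi_1(z)\bQ^n+\tau\varphi_1(z)\mNQ^n+\tau\varphi_2(z)(\mNQ^{n,*}-\mNQ^n).
\]
Apply $\bar\varphi_1(z)/\tau$ and then write $-\mLQ\bQ^n=-\mLQ\bQ^{n+1}+z\,\delta_\tau\bQ^{n+1}$. This yields \eqref{eq:etd2_nodal_Q}, where the factor $\varphi_2/\varphi_1$ is again a commuting bounded operator. Invertibility of $\varphi_1(z)$ makes this step reversible as well. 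The scalar updates \eqref{eq4_8} and \eqref{eq4_12} are untouched by these manipulations, so either form, together with them, defines the same scheme.

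There is no real analytic obstacle here. The main point needing care is justifying the operator calculus: one must check self-adjointness and strict positivity of $\mLQ$ and $\mLu$, so that $\varphi_1,\varphi_2$ evaluated on the spectrum stay away from zero and the scalar identities lift to operator identities. One must also interpret $\psi,\psi_2$ at $z=0$ by continuity, which is in fact never needed because $\kappa>0$ keeps the spectrum away from zero. After that, the proof is bookkeeping with the two identities $1-e^{-z}=z\varphi_1(z)$ and $\varphi_1(z)-\varphi_2(z)\,\cdot$(nothing else) being used only through the definitions of $\bar\varphi_1$ and $\bar\varphi_2$.
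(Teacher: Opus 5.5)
Your proposal is correct and follows the paper's proof essentially step for step. Both arguments use the same sequence of operator-calculus manipulations:
\begin{itemize}
\item apply $\tau^{-1}\bar{\varphi}_1(\tau\Lambda_h)$ to the predictor to get \eqref{eqc3}--\eqref{eqc5};
\item subtract the predictor from the corrector and apply $\tau^{-1}\bar{\varphi}_2(\tau\Lambda_h)$ to get the stage-difference relation, then add the transformed predictor to obtain the stage form;
\item apply $\tau^{-1}\bar{\varphi}_1(\tau\Lambda_h)$ directly to the corrector to obtain the nodal form, with invertibility of $\varphi_j(\tau\Lambda_h)$ giving reversibility throughout.
\end{itemize}
The only difference is cosmetic: you shift $\Lambda_h V_h^n$ to $\Lambda_h V_h^{n,*}$ (or to $\Lambda_h V_h^{n+1}$) by hand, whereas the paper invokes the identity $e^{-z}\bar{\varphi}_1(z)=\psi(z)$.
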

\begin{proof}
We write $(V_h,\Lambda_h,F_h)$ for either
$(\bQ,\mLQ,\mNQ)$ or $(\bu,\mLu,\mNu)$.
The scalar identities
\[
\bar{\varphi}_1(z)=\mq(z)+z,\qquad
e^{-z}\bar{\varphi}_1(z)=\mq(z),\qquad
z\varphi_2(z)=1-\varphi_1(z)
\]
hold for $z\ge0$, with continuous extensions at zero.
Since $\Lambda_h$ is symmetric positive definite and
$\varphi_1(z),\varphi_2(z)>0$ for $z\ge0$, these identities also hold
for the corresponding operators, and both $\varphi_j(\tau\Lambda_h)$
are invertible.
Applying $\tau^{-1}\bar{\varphi}_1(\tau\Lambda_h)$ to the predictor
gives \eqref{eqc3}--\eqref{eqc5}.

Subtracting the predictor from the corrector and applying
$\tau^{-1}\bar{\varphi}_2(\tau\Lambda_h)$ gives
\begin{equation}
\bar{\varphi}_2(\tau\Lambda_h)\dxt V_h^{n+1}
=F_h^{n,*}-F_h^n.
\label{eq:etd2_stage_difference}
\end{equation}
Adding the transformed predictor and using
$\Lambda_hV_h^{n,*}=\Lambda_hV_h^{n+1}
-\Lambda_h(V_h^{n+1}-V_h^{n,*})$ yields
\eqref{eqc2}--\eqref{eqc4}.
Applying $\tau^{-1}\bar{\varphi}_1(\tau\Lambda_h)$ directly to the
corrector instead yields \eqref{eq:etd2_nodal_Q}--\eqref{eq:etd2_nodal_u}.
All these transformations are reversible, which proves the equivalence.
\end{proof}

\begin{remark}
The significance of the above reformulation is not merely algebraic.
Taking the inner product of
$\mq(\tau\Lambda_h)\delta_\tau V_h^{n+1}
+\Lambda_h V_h^{n+1}$ with $\delta_\tau V_h^{n+1}$ yields
\ba \label{eq:etd2_coercive_testing}
&\left\langle
\psi(\tau\Lambda_h)\delta_\tau V_h^{n+1},
\delta_\tau V_h^{n+1}
\right\rangle_h
+
\left\langle
\Lambda_hV_h^{n+1},
\delta_\tau V_h^{n+1}
\right\rangle_h
\\
&\qquad =
\frac{1}{2\tau}
\left(
\left\langle
\Lambda_hV_h^{n+1},
 V_h^{n+1}
\right\rangle_h
-
\left\langle
\Lambda_h V_h^{n},
 V_h^{n}
\right\rangle_h
\right)
+
\left\langle
\psi_1(\tau\Lambda_h)\delta_\tau V_h^{n+1},
\delta_\tau V_h^{n+1}
\right\rangle_h.
\ed
Since $\psi_1(z)\ge1$ for $z\ge0$, the last term provides
mesh-uniform control of
$\|\delta_\tau V_h^{n+1}\|_h^2$ without imposing the restriction
 $\tau\lambda_{\max}(\Lambda_h)\lesssim1$ used in \cite{ju2022generalized}. This mesh-uniform coercivity is
essential in the fully discrete error analysis.

Moreover, the coercive structure \eqref{eq:etd2_nodal_Q}--\eqref{eq:etd2_nodal_u}   allows the exponential scheme to be analyzed through standard
backward Euler-type energy arguments while retaining its exact
treatment of the linear dissipative part.
\end{remark}

Since $\mLQ$ and $\mLu$ are symmetric positive definite, they naturally induce
the following inner products and associated energy norms:
\ba
\ilQ{ \cdot }  := \langle \mathcal{L}_h \cdot,\cdot  \rangle_h= K  \norm{\nabla_h \cdot }_h^2 +  \kappa  \norm{\cdot}_h^2, \quad \ilu{\cdot} :=\langle \mathcal{D}_h \cdot,\cdot  \rangle_h= 2B_0  \norm{\Delta_h \cdot }_h^2 + \kappa \norm{\cdot}_h^2.
\ed
For any $\mathbf{U} \in \mathcal{S}_h^{(d)}$, we define the following weighted energy norms:
\ba\label{eq_inner_Q}
\iaQ{\mathbf{U}}
&:= \langle \psi(\tau \mathcal{L}_h)\mathbf{U}, \mathbf{U} \rangle_h,
&\qquad
\ibQ{\mathbf{U}}
&:= \langle \psi_1(\tau \mathcal{L}_h)\mathbf{U}, \mathbf{U} \rangle_h,
\\
\icQ{\mathbf{U}}
&:= \langle \psi(\tau \mathcal{L}_h)\mathcal{L}_h\mathbf{U}, \mathbf{U} \rangle_h,
&\qquad
\idQ{\mathbf{U}}
&:= \langle \psi_1(\tau \mathcal{L}_h)\mathcal{L}_h\mathbf{U}, \mathbf{U} \rangle_h.
\ed
The scalar relations
\begin{equation}\label{eq_a2}
0 \le \psi^2(z) \le \psi(z) \le 1 \le \psi_1(z),
\qquad
0 \le \psi(z)z \le z\le \psi_1(z)z
\end{equation}
and the spectral mapping theorem directly give the norm comparisons
\ba\label{eq_norm_bounds_1}
\|\psi(\tau \mathcal{L}_h) \mathbf{U}\|_h
\le \|\mathbf{U}\|_{\psi_\mathcal{L}}
\le \|\mathbf{U}\|_h
\le \|\mathbf{U}\|_{\psi_\mathcal{L}^1},\quad \|\mathbf{U}\|_{\psi_{\mathcal L}^2}
\le\|\mathbf{U}\|_{\mathcal L_h}
\le \|\mathbf{U}\|_{\psi_{\mathcal L}^{3}}.
\ed
The corresponding norms and inequalities associated with $\mathcal{D}_h$
are defined analogously by replacing $\mathcal{L}_h$ with $\mathcal{D}_h$
in the above expressions.
\begin{lemma}\label{lem_phi_ineq}
    For any $z \ge 0$, the following inequalities hold:
    \ba
    0 \le& \varphi_1(z) \le 1, \quad 0 \le \varphi_2(z) \le \frac{1}{2}, \quad 0 \le z\varphi_2(z) \le 1,\\
    \varphi_2(z) \le& \varphi_1(z) \le 2\varphi_2(z),\quad
        1 \le \frac{\varphi_1(z)}{\varphi_2(z)} \le 2,\quad
\frac{1}{2} \le \frac{\varphi_2(z)}{\varphi_1(z)} \le 1,\\
\bar{\varphi}_1(z)\le&\bar{\varphi}_2(z) \le 2\bar{\varphi}_1(z),\quad \frac{1}{2}\bar{\varphi}_2(z) - \bar{\varphi}_1(z) + \frac{1}{2} z \ge 0.
    \ed
\end{lemma}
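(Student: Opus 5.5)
The plan is to prove everything from the integral representations \eqref{eq3a}, without manipulating the closed forms. The first line is immediate. Since $0<e^{-\varsigma z}\le 1$ for $z\ge0$, $\varsigma\in[0,1]$, we get
\[
0\le\varphi_1(z)\le\int_0^1d\varsigma=1,\qquad 0\le\varphi_2(z)\le\int_0^1(1-\varsigma)\,d\varsigma=\tfrac12 .
\]
For the third bound we use $z\varphi_2(z)=1-\varphi_1(z)$, already recorded in the proof of Lemma~\ref{lem:etd2_reformulation}. Combined with $0\le\varphi_1\le1$ it gives $0\le z\varphi_2(z)\le1$.

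For the second line, $\varphi_2\le\varphi_1$ follows because $(1-\varsigma)e^{-\varsigma z}\le e^{-\varsigma z}$ pointwise. For $\varphi_1\le2\varphi_2$ we write
\[
2\varphi_2(z)-\varphi_1(z)=\int_0^1(1-2\varsigma)e^{-\varsigma z}\,d\varsigma .
\]
The weight $1-2\varsigma$ is antisymmetric about $\varsigma=\tfrac12$ and $e^{-\varsigma z}$ is nonincreasing in $\varsigma$. Pairing $\varsigma$ with $1-\varsigma$ gives
\[
\int_0^{1/2}(1-2\varsigma)\bigl(e^{-\varsigma z}-e^{-(1-\varsigma)z}\bigr)\,d\varsigma\ \ge 0 ,
\]
since both factors are nonnegative on $[0,\tfrac12]$. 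Because $\varphi_1,\varphi_2>0$ (the integrands are positive), dividing yields the ratio bounds $1\le\varphi_1/\varphi_2\le2$. Taking reciprocals gives $\tfrac12\le\varphi_2/\varphi_1\le1$, and with $\bar\varphi_j=1/\varphi_j$ also $\bar\varphi_1\le\bar\varphi_2\le2\bar\varphi_1$.

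The last inequality is the main obstacle, since it is not a direct ratio statement. Multiplying by $2\varphi_1\varphi_2>0$, it is equivalent to
\[
\varphi_1-2\varphi_2+z\varphi_1\varphi_2\ge0 .
\]
Substituting $z\varphi_2=1-\varphi_1$ turns this into
\[
\varphi_1-2\varphi_2+\varphi_1(1-\varphi_1)=2\varphi_1-\varphi_1^2-2\varphi_2\ge0 .
\]
Multiplying by $z\ge0$ and using $z\varphi_2=1-\varphi_1$ once more, it suffices to show $2z\varphi_1-z\varphi_1^2-2+2\varphi_1\ge0$ for $z>0$; the case $z=0$ follows by continuity. Now insert $\varphi_1=(1-e^{-z})/z$ and multiply by $z>0$:
\[
2z(1-e^{-z})-(1-e^{-z})^2-2z+2(1-e^{-z})=1-2ze^{-z}-e^{-2z}.
\]
It remains to prove $F(z):=e^{2z}-2ze^{z}-1\ge0$, which is this expression multiplied by $e^{2z}$. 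We have $F(0)=0$ and
\[
F'(z)=2e^{z}\bigl(e^{z}-1-z\bigr)\ge0 ,
\]
so $F\ge0$ on $[0,\infty)$, which finishes the proof. I would double-check the algebra in this reduction carefully, since sign slips are the likely failure point. If a slip appears, a fallback is to verify $2\varphi_1-\varphi_1^2-2\varphi_2\ge0$ via Taylor expansion for small $z$ (it equals $z^2/12+O(z^3)$, consistent with the bound) together with monotonicity arguments.
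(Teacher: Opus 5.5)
Your proof is correct, and for the main inequality $\frac12\bar{\varphi}_2-\bar{\varphi}_1+\frac12 z\ge0$ it follows essentially the paper's route. The paper substitutes the closed forms directly and writes the expression as $\frac z2\,N(z)/D(z)$ with $N(z)=1-e^{-2z}-2ze^{-z}$. You instead clear denominators with $2\varphi_1\varphi_2$ and use $z\varphi_2=1-\varphi_1$ twice. You end at the same function (your $F$ equals $e^{2z}N$) and close with the same derivative argument.

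You genuinely differ on $\varphi_1\le2\varphi_2$. The paper passes to closed forms and shows $y(z)=z+(z+2)e^{-z}-2\ge0$ by convexity, using $y''=ze^{-z}$ and $y(0)=y'(0)=0$. You stay with the integral representation and pair $\varsigma$ with $1-\varsigma$ against the antisymmetric weight $1-2\varsigma$.

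Your version needs no continuity argument at $z=0$. It also shows the bound is a Chebyshev-type fact that holds with any nonincreasing kernel in place of $e^{-\varsigma z}$. The paper's version is a quick one-variable calculus check.

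You also write out the first-line bounds and the ratio and reciprocal consequences, which the paper leaves implicit.

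One small correction to your fallback remark: $2\varphi_1-\varphi_1^2-2\varphi_2=(1-2ze^{-z}-e^{-2z})/z^2$ behaves like $z/3+O(z^2)$ near $0$, not $z^2/12+O(z^3)$. This does not affect your proof, because your exact algebra checks out and the fallback is never needed.
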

The proof of Lemma~\ref{lem_phi_ineq} is given in
Appendix~\ref{app:proof_phi_ineq}.
We define the modified discrete energy $\mathcal{E}_h$ by
\ba\label{energy_discrete}
\mathcal{E}_h[\mathbf{Q}_h, u_h,s_h] = \frac{K}{2} \|\nabla_h \mathbf{Q}_h\|_h^2  + B_0 \left\| \Delta_h u_h  \right\|_h^2 + \frac{\kappa }{2} \left( \|\mathbf{Q}_h\|_h^2 + \|u_h\|_h^2 \right) + s_h.
\ed
At time $t_n$, we denote the discrete energy by
\ba
\mathcal{E}_h^n = \mathcal{E}_h[\bQ^n, u_h^n, s_h^n],\quad
\mathcal{E}_h^{n,*} = \mathcal{E}_h[\bQ^{n,*}, u_h^{n,*}, s_h^{n,*}],\quad n \ge 0.
\ed
\begin{theorem}[Unconditional energy stability]\label{them3_1}
 For any $\kappa > 0$ and any time step size $\tau > 0$, the GSAV-ETD2 scheme \eqref{eq4_6}--\eqref{eq4_12} satisfies the following discrete energy dissipation law:
    \ba
    \mathcal{E}_h^{n+1} \le \mathcal{E}_h^n,\quad \forall n \ge 0.
    \ed
\end{theorem}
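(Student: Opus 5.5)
The plan is to express the one-step energy change as a quadratic form in the predictor and corrector increments, using the stage form of Lemma~\ref{lem:etd2_reformulation}. Then I would show that this form is nonpositive eigenmode-by-eigenmode, using Lemma~\ref{lem_phi_ineq}. The $\mathbf{Q}$- and $u$-parts are treated identically, so I write $(V_h,\Lambda_h,F_h)$ for either $(\bQ,\mLQ,\mNQ)$ or $(\bu,\mLu,\mNu)$. I abbreviate $a:=\dpred V_h^{n,*}$ and $b:=\dx V_h^{n+1}$, so that $\delta V_h^{n+1}=a+b$.

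\textbf{Step 1 (write the energy with $\Lambda_h$).} Observe that $\frac K2\|\nabla_h\bQ\|_h^2+\frac\kappa2\|\bQ\|_h^2=\frac12\langle\mLQ\bQ,\bQ\rangle_h$, and similarly $B_0\|\Delta_h\bu\|_h^2+\frac\kappa2\|\bu\|_h^2=\frac12\langle\mLu\bu,\bu\rangle_h$. Since $\Lambda_h$ is symmetric,
\[
\tfrac12\langle\Lambda_hV_h^{n+1},V_h^{n+1}\rangle_h-\tfrac12\langle\Lambda_hV_h^{n},V_h^{n}\rangle_h
=\tfrac12\langle\Lambda_h(V_h^{n+1}+V_h^n),a+b\rangle_h .
\]

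\textbf{Step 2 (eliminate the nonlinear terms in the $s$-update).} From \eqref{eqc3} I have $F_h^n=\tau^{-1}\psi(\tau\Lambda_h)a+\Lambda_hV_h^{n,*}$. From \eqref{eqc2} I have $F_h^{n,*}=\tau^{-1}\psi_2(\tau\Lambda_h)b+\tau^{-1}\psi(\tau\Lambda_h)a+\Lambda_hV_h^{n+1}$. I substitute both into $-\frac12\langle F_h^n+F_h^{n,*},a+b\rangle_h$ in \eqref{eq4_12}. Using $V_h^{n,*}+V_h^{n+1}-(V_h^{n+1}+V_h^n)=a$, the $\Lambda_h$-parts cancel against Step 1 up to a remainder $\frac12\langle\Lambda_h a,a+b\rangle_h$. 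This gives
\[
\tau(\mathcal E_h^{n+1}-\mathcal E_h^n)=-\sum\Big[\langle\psi a,a+b\rangle_h+\tfrac12\langle\psi_2 b,a+b\rangle_h+\tfrac12\langle\tau\Lambda_h a,a+b\rangle_h+\tfrac14\langle\tau\Lambda_h b,b\rangle_h\Big],
\]
where the sum runs over the $\mathbf Q$- and $u$-parts, and $\psi,\psi_2$ are evaluated at $\tau\Lambda_h$. The predictor value $s_h^{n,*}$ enters only through $F_h^{n,*}$, so it causes no difficulty.

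\textbf{Step 3 (positivity of the quadratic form; the main step).} All operators here are functions of $\Lambda_h$, so I diagonalize in an eigenbasis of $\Lambda_h$. It then suffices to show that, for each $z=\tau\lambda\ge0$, the scalar form
\[
q(a,b)=A a^2+(A+c)\,ab+\big(c+\tfrac z4\big)b^2
\]
is positive semidefinite. Here $A=\psi+\frac z2=\psi_1$, $c=\frac12\psi_2$, and in terms of $\bar\varphi_j$ we have $A=\bar\varphi_1-\frac z2$ and $c=\frac12\bar\varphi_2-\frac z2$. Since $A\ge1>0$, semidefiniteness is equivalent to the discriminant condition
\[
(A+c)^2\le 4A\big(c+\tfrac z4\big)\iff (A-c)^2\le Az .
\]
Now $A-c=\bar\varphi_1-\frac12\bar\varphi_2$. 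By Lemma~\ref{lem_phi_ineq}, the inequality $\bar\varphi_2\le2\bar\varphi_1$ gives $A-c\ge0$, and the inequality $\frac12\bar\varphi_2-\bar\varphi_1+\frac z2\ge0$ gives $A-c\le\frac z2$. Hence $(A-c)^2\le z^2/4$. Finally, $\psi_1(z)=\frac z2\coth\frac z2\ge\frac z2\ge\frac z4$, so $z^2/4\le Az$. The point $z=0$ is handled by continuity. Summing over modes and over the two components yields $\mathcal E_h^{n+1}\le\mathcal E_h^n$.

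I expect the main obstacle to be the bookkeeping in Step 2, namely getting the exact remainder $\frac12\langle\Lambda_h a,a+b\rangle_h$ and checking that the $\frac14\langle\Lambda_hb,b\rangle_h$ stabilization in \eqref{eq4_12} is precisely what makes the discriminant condition close. Once that is right, the scalar inequality in Step 3 follows directly from Lemma~\ref{lem_phi_ineq}.
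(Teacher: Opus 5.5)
Your proof is correct. Your Step~2 identity coincides with the paper's final expression for $\mathcal{E}_h^{n+1}-\mathcal{E}_h^n$: per mode, both give the coefficients $\bar\varphi_1-\frac z2$, $\bar\varphi_1+\frac12\bar\varphi_2-z$ and $\frac12\bar\varphi_2-\frac z4$ on $a^2$, $ab$ and $b^2$. Both arguments also close with the same two inequalities from Lemma~\ref{lem_phi_ineq}.

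The difference is organizational. The paper passes through the intermediate energy, $\mathcal{E}_h^n\to\mathcal{E}_h^{n,*}\to\mathcal{E}_h^{n+1}$. It then finishes with the explicit sum of squares $(\frac14\bar\varphi_2+\frac12\psi)(a+b)^2+\frac12(\bar\varphi_1-\frac12\bar\varphi_2)a^2+\frac12(\frac12\bar\varphi_2-\bar\varphi_1+\frac z2)b^2$. You instead go straight from $t_n$ to $t_{n+1}$ and use a discriminant test.

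The paper's route gives $\mathcal{E}_h^{n,*}\le\mathcal{E}_h^n$ (see \eqref{eq_proof1}) as a byproduct. That inequality is needed later, in \eqref{eq_a7}, to bound $s_h^{n,*}$ from above in Lemma~\ref{gn_bound1}. Your route would have to supply that short predictor computation separately.

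Your route, in turn, isolates the role of the correction term in \eqref{eq4_12}. Without that term the discriminant condition forces $A=c$, but $A-c=\bar\varphi_1-\frac12\bar\varphi_2>0$ for $z>0$, so the form would be indefinite. Moreover, since $A=\psi_1\ge\frac z2$, your bound $(A-c)^2\le\frac{z^2}{4}$ would already close with coefficient $\frac18$ in place of $\frac14$. The paper's fixed split, combined with $\frac12\bar\varphi_2-\bar\varphi_1\ge-\frac z2$, uses the full $\frac14$.
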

\begin{proof}
    By Lemma~\ref{lem:etd2_reformulation}, the predictor satisfies
\eqref{eqc3}--\eqref{eqc5}. Taking their discrete inner products with
$\dpred\bQ^{n,*}$ and $\dpred\bu^{n,*}$, respectively, gives
    \ba \label{eq34}
    \langle \mLQ\bQ^{n,*}, \dpred\bQ^{n,*} \rangle_h &=-\frac{1}{\tau}\langle \mq(\tau \mLQ) \dpred\bQ^{n,*}, \dpred\bQ^{n,*} \rangle_h+\id{\mNQ^n}{\dpred\bQ^{n,*}}
    \\&= -\frac{1}{\tau}\iaQ{\dpred\bQ^{n,*}}  + g_h^n \langle \muQ^n, \dpred\bQ^{n,*} \rangle_h, \\
    \langle \mLu\bu^{n,*}, \dpred\bu^{n,*} \rangle_h &= -\frac{1}{\tau}\langle \mq(\tau \mLu) \dpred\bu^{n,*}, \dpred\bu^{n,*} \rangle_h  + \langle \mNu^n, \dpred\bu^{n,*} \rangle_h \\
    &=-\frac{1}{\tau}\iau{\dpred\bu^{n,*}} + g_h^n \langle \mu_h^n, \dpred\bu^{n,*} \rangle_h.
    \ed
    The energy difference $\mathcal{E}_h^{n,*}-\mathcal{E}_h^n$ is
    \ba \label{eq_diff_E}
        \mathcal{E}_h^{n,*} - {\mathcal{E}_h}^n & = \frac{K}{2}\ia{\nabla_h  \bQ^{n,*} }- \frac{K}{2}\ia{\nabla_h \bQ^{n}} + B_0\ia{\Delta_h\bu^{n,*}} - B_0\ia{\Delta_h\bu^{n}}  +  s_h^{n,*} -s_h^n  \\&
        + \frac{\kappa }{2} \left( \|\bQ^{n,*}\|_h^2 - \|\bQ^n\|_h^2 + \|\bu^{n,*}\|_h^2 - \|\bu^n\|_h^2 \right)  \\
                                            & =  \id{\mLQ\bQ^{n,*}}{\dpred\bQ^{n,*}}  - \frac{1}{2} \ilQ{\dpred\bQ^{n,*}}  + \langle \mLu\bu^{n,*}, \dpred\bu^{n,*} \rangle_h  \\
                                            &
        - \frac{1}{2} \ilu{ \dpred\bu^{n,*}}  - g_h^n \left( \langle \muQ^n, \dpred\bQ^{n,*} \rangle_h  + \langle \mu_h^n, \dpred\bu^{n,*} \rangle_h \right).
    \ed
    Substituting \eqref{eq34} into \eqref{eq_diff_E} and using
    \eqref{eq_a2} gives
    \begin{equation}\label{eq_proof1}
        \begin{aligned}
            {\mathcal{E}_h}^{n,*} - \mathcal{E}_h^n
            & = -\frac{1}{\tau}\iaQ{\dpred\bQ^{n,*}} - \frac{1}{2} \ilQ{\dpred\bQ^{n,*}}
            - \frac{1}{\tau}\iau{\dpred\bu^{n,*}} - \frac{1}{2} \ilu{\dpred\bu^{n,*}} \\
            & = -\frac{1}{2\tau}\left(\iaQ{\dpred\bQ^{n,*}}+\left\langle(\mq(\tau\mathcal{L}_h)+\tau\mathcal{L}_h)\dpred\bQ^{n,*},\dpred\bQ^{n,*}\right\rangle_h\right) \\
            & \quad - \frac{1}{2\tau}\left(\iau{\dpred\bu^{n,*}}+\left\langle(\mq(\tau\mathcal{D}_h)+\tau\mathcal{D}_h)\dpred\bu^{n,*},\dpred\bu^{n,*}\right\rangle_h\right) \\
            & = -\frac{1}{2\tau}\left(\iaQ{\dpred\bQ^{n,*}}+\left\langle\bar{\varphi}_1(\tau\mathcal{L}_h)\dpred\bQ^{n,*},\dpred\bQ^{n,*}\right\rangle_h\right) \\
            & \quad - \frac{1}{2\tau}\left(\iau{\dpred\bu^{n,*}}+\left\langle\bar{\varphi}_1(\tau\mathcal{D}_h)\dpred\bu^{n,*},\dpred\bu^{n,*}\right\rangle_h\right)\leq 0.
        \end{aligned}
    \end{equation}

Next, the energy difference $\mathcal{E}_h^{n+1}-\mathcal{E}_h^{n,*}$ is
    \begin{align}
        \mathcal{E}_h^{n+1} - \mathcal{E}_h^{n,*}
        & = \frac{K}{2}\ia{\nabla_h  \bQ^{n+1} }- \frac{K}{2}\ia{\nabla_h \bQ^{n,*}}
        + B_0\ia{\Delta_h\bu^{n+1}} - B_0\ia{\Delta_h\bu^{n,*}} + s_h^{n+1}-s_h^{n,*} \no\\
        & \quad + \frac{\kappa }{2} \left( \|\bQ^{n+1}\|_h^2 - \|\bQ^{n,*}\|_h^2 + \|\bu^{n+1}\|_h^2 - \|\bu^{n,*}\|_h^2 \right) \no\\
        & = \id{\mLQ\bQ^{n+1}}{\dx \bQ^{n+1}} - \frac{1}{2} \ilQ{\dx \bQ^{n+1}}
        + \langle \mLu\bu^{n+1}, \dx \bu^{n+1} \rangle_h \no\\
        & \quad - \frac{1}{2} \ilu{\dx \bu^{n+1}} + s_h^{n+1} -s_h^{n,*}.\label{eq4t}
    \end{align}
Using the stage form \eqref{eqc2}--\eqref{eqc4} from
Lemma~\ref{lem:etd2_reformulation} and taking the inner products with
$\dx\bQ^{n+1}$ and $\dx\bu^{n+1}$, respectively, we obtain
    \ba \no
    \langle \mLQ\bQ^{n+1}, \dx \bQ^{n+1} \rangle_h
    &= -\frac{1}{\tau}\langle \psi_2(\tau \mLQ) \dx \bQ^{n+1}, \dx \bQ^{n+1} \rangle_h \\
    &\quad -\frac{1}{\tau}\langle \mq(\tau \mLQ) \dpred\bQ^{n,*},\dx \bQ^{n+1} \rangle_h
    + \id{\mNQ^{n,*}}{\dx \bQ^{n+1}}, \\
    \langle \mLu\bu^{n+1}, \dx \bu^{n+1} \rangle_h
    &= -\frac{1}{\tau}\langle \psi_2(\tau \mLu) \dx \bu^{n+1}, \dx \bu^{n+1} \rangle_h \\
    &\quad -\frac{1}{\tau}\langle \mq(\tau \mLu) \dpred\bu^{n,*},\dx \bu^{n+1} \rangle_h
    + \langle \mNu^{n,*}, \dx\bu^{n+1} \rangle_h.
    \ed
    The update for $s$ in \eqref{eq4_12} gives
    \begin{align*}
        & s_h^{n+1}-s_h^{n,*} +\id{\mNQ^{n,*}}{\dx \bQ^{n+1}} + \langle \mNu^{n,*}, \dx \bu^{n+1} \rangle_h\\
        & = - \frac{1}{2} \left( \left\langle (\mNQ^n+\mNQ^{n,*}), \delta \bQ^{n+1} \right\rangle_h + \left\langle (\mNu^n+\mNu^{n,*}), \delta \bu^{n+1} \right\rangle_h \right) \\
        & +  \left( \langle \mNQ^n, \dpred\bQ^{n,*} \rangle_h  + \langle \mNu^n, \dpred\bu^{n,*} \rangle_h \right)+\left( \langle \mNQ^{n,*}, \dx \bQ^{n+1} \rangle_h  + \langle \mNu^{n,*}, \dx \bu^{n+1} \rangle_h \right)\\
        &-\frac{1}{4\tau}\langle \tau \mathcal{L}_h \dx \bQ^{n+1}, \dx \bQ^{n+1} \rangle_h-\frac{1}{4\tau}\langle\tau \mathcal{D}_h \dx \bu^{n+1}, \dx \bu^{n+1} \rangle_h\\
        & = \frac{1}{2} \left( \left\langle ( \mNQ^{n,*} - \mNQ^n), \dx \bQ^{n+1} \right\rangle_h + \left\langle (\mNu^{n,*} - \mNu^n), \dx \bu^{n+1} \right\rangle_h \right) \\
&- \frac{1}{2} \left( \left\langle ( \mNQ^{n,*} - \mNQ^n), \dpred\bQ^{n,*} \right\rangle_h + \left\langle (\mNu^{n,*} - \mNu^n), \dpred\bu^{n,*} \right\rangle_h \right)        \\
        &-\frac{1}{4\tau}\langle \tau \mathcal{L}_h \dx \bQ^{n+1}, \dx \bQ^{n+1} \rangle_h-\frac{1}{4\tau}\langle\tau \mathcal{D}_h \dx \bu^{n+1}, \dx \bu^{n+1} \rangle_h
    \end{align*}

    By \eqref{eq:etd2_stage_difference}, we have
\ba
\frac1\tau\bar{\varphi}_2(\tau\mLQ)(\bQ^{n+1}-\bQ^{n,*})=\mNQ^{n,*}-\mNQ^n,\\
\frac1\tau\bar{\varphi}_2(\tau\mLu)(\bu^{n+1}-\bu^{n,*})=\mNu^{n,*}-\mNu^n.
\ed
    Substituting these relations into \eqref{eq4t} gives
    \begin{align*}
        \mathcal{E}_h^{n+1} - \mathcal{E}_h^{n,*} & =  \frac{1}{2\tau} \left( \left\langle \bar{\varphi}_2(\tau \mathcal{L}_h)(\dx \bQ^{n+1}), \dx \bQ^{n+1} \right\rangle_h + \left\langle \bar{\varphi}_2(\tau \mathcal{D}_h)(\dx \bu^{n+1}), \dx \bu^{n+1} \right\rangle_h \right)\\
        & - \frac{1}{2\tau} \left( \left\langle \bar{\varphi}_2(\tau \mathcal{L}_h)(\dx \bQ^{n+1}), \dpred\bQ^{n,*} \right\rangle_h + \left\langle \bar{\varphi}_2(\tau \mathcal{D}_h)(\dx \bu^{n+1}), \dpred\bu^{n,*} \right\rangle_h \right)\\
        &   -\frac{1}{\tau}\langle \psi_2(\tau \mLQ) \dx \bQ^{n+1}, \dx \bQ^{n+1} \rangle_h-\frac{1}{\tau}\langle \mq(\tau \mLQ) \dx \bQ^{n+1}, \dpred\bQ^{n,*} \rangle_h    - \frac{1}{2} \ilQ{\dx \bQ^{n+1}}  \\
        & -\frac{1}{\tau}\langle \psi_2(\tau \mLu) \dx \bu^{n+1}, \dx \bu^{n+1} \rangle_h -\frac{1}{\tau}\langle \mq(\tau \mLu) \dx \bu^{n+1}, \dpred\bu^{n,*} \rangle_h- \frac{1}{2} \ilu{ \dx \bu^{n+1}}\\
        &-\frac{1}{4\tau}\langle \tau \mathcal{L}_h \dx \bQ^{n+1}, \dx \bQ^{n+1} \rangle_h-\frac{1}{4\tau}\langle\tau \mathcal{D}_h \dx \bu^{n+1}, \dx \bu^{n+1} \rangle_h\\
                =& - \frac{1}{2\tau} \left( \left\langle \bar{\varphi}_2(\tau \mathcal{L}_h)(\dx \bQ^{n+1}), \dpred\bQ^{n,*} \right\rangle_h + \left\langle \bar{\varphi}_2(\tau \mathcal{D}_h)(\dx \bu^{n+1}), \dpred\bu^{n,*} \right\rangle_h \right)\\
        &   -\frac{1}{2\tau}\langle (\bar{\varphi}_2(\tau \mathcal{L}_h)-\tau \mathcal{L}_h) \dx \bQ^{n+1}, \dx \bQ^{n+1} \rangle_h-\frac{1}{\tau}\langle \mq(\tau \mLQ) \dx \bQ^{n+1}, \dpred\bQ^{n,*} \rangle_h  \\
        & -\frac{1}{2\tau}\langle (\bar{\varphi}_2(\tau \mathcal{D}_h)-\tau \mathcal{D}_h) \dx \bu^{n+1}, \dx \bu^{n+1} \rangle_h -\frac{1}{\tau}\langle \mq(\tau \mLu) \dx \bu^{n+1}, \dpred\bu^{n,*} \rangle_h\\        &-\frac{1}{4\tau}\langle \tau \mathcal{L}_h \dx \bQ^{n+1}, \dx \bQ^{n+1} \rangle_h-\frac{1}{4\tau}\langle\tau \mathcal{D}_h \dx \bu^{n+1}, \dx \bu^{n+1} \rangle_h.
    \end{align*}
   Consequently, the change in the discrete energy from $\mathcal E_h^n$ to $\mathcal E_h^{n+1}$ is given by
    \begin{align*}
        \mathcal{E}_h^{n+1} - \mathcal{E}_h^{n} =& - \frac{1}{2\tau} \left( \left\langle \bar{\varphi}_2(\tau \mathcal{L}_h)(\dx \bQ^{n+1}), \dpred\bQ^{n,*} \right\rangle_h + \left\langle \bar{\varphi}_2(\tau \mathcal{D}_h)(\dx \bu^{n+1}), \dpred\bu^{n,*} \right\rangle_h \right)\\
        &   -\frac{1}{2\tau}\langle (\bar{\varphi}_2(\tau \mathcal{L}_h)-\tau \mathcal{L}_h) \dx \bQ^{n+1}, \dx \bQ^{n+1} \rangle_h-\frac{1}{\tau}\langle \mq(\tau \mLQ) \dx \bQ^{n+1}, \dpred\bQ^{n,*} \rangle_h  \\
        & -\frac{1}{2\tau}\langle (\bar{\varphi}_2(\tau \mathcal{D}_h)-\tau \mathcal{D}_h) \dx \bu^{n+1}, \dx \bu^{n+1} \rangle_h -\frac{1}{\tau}\langle \mq(\tau \mLu) \dx \bu^{n+1}, \dpred\bu^{n,*} \rangle_h\\
        &-\frac{1}{2\tau}(\iaQ{\dpred\bQ^{n,*}}+\left\langle\bar{\varphi}_1(\tau\mathcal{L}_h)\dpred\bQ^{n,*},\dpred\bQ^{n,*}\right\rangle_h)\\
        &-\frac{1}{2\tau}(\iau{\dpred\bu^{n,*}}+\left\langle\bar{\varphi}_1(\tau\mathcal{D}_h)\dpred\bu^{n,*},\dpred\bu^{n,*}\right\rangle_h)\\
                &-\frac{1}{4\tau}\langle \tau \mathcal{L}_h \dx \bQ^{n+1}, \dx \bQ^{n+1} \rangle_h-\frac{1}{4\tau}\langle\tau \mathcal{D}_h \dx \bu^{n+1}, \dx \bu^{n+1} \rangle_h\\
        =& - \frac{1}{2\tau} \left( \left\langle \bar{\varphi}_2(\tau \mathcal{L}_h)(\dx \bQ^{n+1}), \dpred\bQ^{n,*} \right\rangle_h + \left\langle \bar{\varphi}_2(\tau \mathcal{D}_h)(\dx \bu^{n+1}), \dpred\bu^{n,*} \right\rangle_h \right)\\
        &-\frac{1}{2\tau}\left\langle\bar{\varphi}_1(\tau\mathcal{L}_h)\dpred\bQ^{n,*},\dpred\bQ^{n,*}\right\rangle_h - \frac{1}{2\tau}\left\langle\bar{\varphi}_1(\tau\mathcal{D}_h)\dpred\bu^{n,*},\dpred\bu^{n,*}\right\rangle_h\\
        &   -\frac{1}{2\tau}\langle (\bar{\varphi}_2(\tau \mathcal{L}_h)-\bar{\varphi}_1(\tau \mathcal{L}_h)) \dx \bQ^{n+1}, \dx \bQ^{n+1} \rangle_h-\frac{1}{2\tau}\langle \mq(\tau \mLQ) \delta \bQ^{n+1}, \delta \bQ^{n+1} \rangle_h \\
        & -\frac{1}{2\tau}\langle (\bar{\varphi}_2(\tau \mathcal{D}_h)-\bar{\varphi}_1(\tau \mathcal{D}_h)) \dx \bu^{n+1}, \dx \bu^{n+1} \rangle_h -\frac{1}{2\tau}\langle \mq(\tau \mLu)  \delta \bu^{n+1}, \delta \bu^{n+1} \rangle_h\\        &-\frac{1}{4\tau}\langle \tau \mathcal{L}_h \dx \bQ^{n+1}, \dx \bQ^{n+1} \rangle_h-\frac{1}{4\tau}\langle\tau \mathcal{D}_h \dx \bu^{n+1}, \dx \bu^{n+1} \rangle_h.
    \end{align*}
    Applying Young's inequality and Lemma~\ref{lem_phi_ineq}, we obtain the following bound for the cross term:
    \begin{align*}
        &-\frac{1}{2\tau} \left( \left\langle \bar{\varphi}_2(\tau \mathcal{L}_h)(\dx \bQ^{n+1}), \dpred\bQ^{n,*} \right\rangle_h + \left\langle \bar{\varphi}_2(\tau \mathcal{D}_h)(\dx \bu^{n+1}), \dpred\bu^{n,*} \right\rangle_h \right)\\&-
          \frac{1}{4\tau} \left( \left\langle \bar{\varphi}_2(\tau \mathcal{L}_h)(\dx \bQ^{n+1}), \dx \bQ^{n+1} \right\rangle_h + \left\langle \bar{\varphi}_2(\tau \mathcal{D}_h)(\dx \bu^{n+1}), \dx \bu^{n+1} \right\rangle_h \right)\\
        & - \frac{1}{2\tau} \left( \left\langle \bar{\varphi}_1(\tau \mathcal{L}_h)(\dpred\bQ^{n,*}), \dpred\bQ^{n,*} \right\rangle_h + \left\langle \bar{\varphi}_1(\tau \mathcal{D}_h)(\dpred\bu^{n,*}), \dpred\bu^{n,*} \right\rangle_h \right)\\&=
     -\frac{1}{2\tau} \left( \left\langle \bar{\varphi}_2(\tau \mathcal{L}_h)(\dx \bQ^{n+1}), \dpred\bQ^{n,*} \right\rangle_h + \left\langle \bar{\varphi}_2(\tau \mathcal{D}_h)(\dx \bu^{n+1}), \dpred\bu^{n,*} \right\rangle_h \right)\\&-
          \frac{1}{4\tau} \left( \left\langle \bar{\varphi}_2(\tau \mathcal{L}_h)(\dx \bQ^{n+1}), \dx \bQ^{n+1} \right\rangle_h + \left\langle \bar{\varphi}_2(\tau \mathcal{D}_h)(\dx \bu^{n+1}), \dx \bu^{n+1} \right\rangle_h \right)
          \\&-
          \frac{1}{4\tau} \left( \left\langle \bar{\varphi}_2(\tau \mathcal{L}_h)(\dpred\bQ^{n,*}), \dpred\bQ^{n,*}\right\rangle_h + \left\langle \bar{\varphi}_2(\tau \mathcal{D}_h)(\dpred\bu^{n,*}), \dpred\bu^{n,*} \right\rangle_h \right)\\
        & - \frac{1}{2\tau} \left( \left\langle (\bar{\varphi}_1(\tau \mathcal{L}_h)-\frac{1}{2} \bar{\varphi}_2(\tau \mathcal{L}_h))(\dpred\bQ^{n,*}), \dpred\bQ^{n,*} \right\rangle_h + \left\langle (\bar{\varphi}_1(\tau \mathcal{D}_h)-\frac{1}{2} \bar{\varphi}_2(\tau \mathcal{D}_h))(\dpred\bu^{n,*}), \dpred\bu^{n,*} \right\rangle_h \right)\\&
        =-\frac{1}{4\tau} \left( \left\langle \bar{\varphi}_2(\tau \mathcal{L}_h)\delta \bQ^{n+1}, \delta \bQ^{n+1}\right\rangle_h + \left\langle \bar{\varphi}_2(\tau \mathcal{D}_h)\delta \bu^{n+1}, \delta \bu^{n+1} \right\rangle_h \right)\\
        & - \frac{1}{2\tau} \left( \left\langle (\bar{\varphi}_1(\tau \mathcal{L}_h)-\frac{1}{2} \bar{\varphi}_2(\tau \mathcal{L}_h))(\dpred\bQ^{n,*}), \dpred\bQ^{n,*} \right\rangle_h + \left\langle (\bar{\varphi}_1(\tau \mathcal{D}_h)-\frac{1}{2} \bar{\varphi}_2(\tau \mathcal{D}_h))(\dpred\bu^{n,*}), \dpred\bu^{n,*} \right\rangle_h \right).
    \end{align*}
    Substituting this bound into the expression for $\mathcal{E}_h^{n+1}-\mathcal{E}_h^n$ gives
    \begin{align*}
        \mathcal{E}_h^{n+1} - \mathcal{E}_h^{n}
        \le& -\frac{1}{2\tau}\left\langle \left(\frac{1}{2}\bar{\varphi}_2(\tau \mathcal{L}_h)-\bar{\varphi}_1(\tau \mathcal{L}_h)\right) \dx \bQ^{n+1}, \dx \bQ^{n+1} \right\rangle_h \\
        & -\frac{1}{2\tau}\left\langle (\mq(\tau \mLQ)+\frac 12 \bar{\varphi}_2(\tau \mathcal{L}_h)) \delta \bQ^{n+1}, \delta \bQ^{n+1} \right\rangle_h \\
        & -\frac{1}{2\tau}\left\langle \left(\frac{1}{2}\bar{\varphi}_2(\tau \mathcal{D}_h)-\bar{\varphi}_1(\tau \mathcal{D}_h)\right) \dx \bu^{n+1}, \dx \bu^{n+1} \right\rangle_h \\
        & -\frac{1}{2\tau}\left\langle (  \mq(\tau \mLu)+\frac{1}{2}\bar{\varphi}_2(\tau \mathcal{D}_h)  ) \delta \bu^{n+1}, \delta \bu^{n+1} \right\rangle_h \\
        & -\frac{1}{4\tau}\left\langle \tau \mathcal{L}_h \dx \bQ^{n+1}, \dx \bQ^{n+1} \right\rangle_h
        -\frac{1}{4\tau}\left\langle\tau \mathcal{D}_h \dx \bu^{n+1}, \dx \bu^{n+1} \right\rangle_h\\
        \le& -\frac{1}{2\tau}\left\langle \left(\frac{1}{2}\bar{\varphi}_2(\tau \mathcal{L}_h)-\bar{\varphi}_1(\tau \mathcal{L}_h) + \frac{1}{2}\tau \mathcal{L}_h\right) \dx \bQ^{n+1}, \dx \bQ^{n+1} \right\rangle_h \\
        & -\frac{1}{2\tau}\left\langle \left(\frac{1}{2}\bar{\varphi}_2(\tau \mathcal{D}_h)-\bar{\varphi}_1(\tau \mathcal{D}_h) + \frac{1}{2}\tau \mathcal{D}_h\right) \dx \bu^{n+1}, \dx \bu^{n+1} \right\rangle_h \leq 0,
    \end{align*}
    where the last inequality follows from $\frac{1}{2}\bar{\varphi}_2(z) - \bar{\varphi}_1(z) + \frac{1}{2} z\ge 0$ for $z \ge 0$.
\end{proof}
\begin{remark}
\label{rem:energy_closing_correction_terms}
The last two terms in the scalar corrector equation~\eqref{eq4_12}
are introduced solely to close the discrete modified-energy estimate; they
do not alter the updates of the primary variables $\bQ^{n+1}$ and
$u_h^{n+1}$.
 Uniformly
with respect to $h$, the operator estimate relevant to the fully discrete
analysis gives
\[
\begin{aligned}
&\left\langle\mathcal L_h\dx\bQ^{n+1},\dx\bQ^{n+1}\right\rangle_h
+\left\langle\mathcal D_h\dx\bu^{n+1},\dx\bu^{n+1}\right\rangle_h\\
&\qquad\le C\tau\left(
\left\|\mNQ^{n,*}-\mNQ^n\right\|_h^2
+\left\|\mNu^{n,*}-\mNu^n\right\|_h^2
\right)
=\mathcal O(\tau^3).
\end{aligned}
\]
Thus these terms are still of the local order required by a second-order
method and do not reduce its temporal accuracy.
\end{remark}
\section{Convergence analysis}\label{se3}
Throughout this section, let  $(\bQ^{n,*}, u_h^{n,*}, s_h^{n,*})$ denote the stage values generated by \eqref{eq4_6}--\eqref{eq4_8} and $(\mathbf{Q}_h^n,u_h^n,s_h^n)$ denote the numerical
solution generated by the GSAV-ETD2 scheme \eqref{eq4_9}--\eqref{eq4_12}.
Let $(\mathbf{Q}(t),u(t),s(t))$ denote the exact solution to the  system
\eqref{eq1_9}, satisfying the following regularity assumptions:
         \ba \label{eqa_19}
&\mathbf{Q}(t) \in L^{\infty}(0,T; \mathbf{H}^4(\Omega, \mathcal{S}^{(d)})) \cap W^{2,\infty}(0,T; \mathbf{H}^2(\Omega, \mathcal{S}^{(d)})),\\& u(t) \in L^\infty(0,T; H^6(\Omega)) \cap W^{2,\infty}(0,T; H^2(\Omega)).
            \ed

For notational simplicity, we omit $\mathcal{I}_h$ in the following analysis,
when exact solutions such as $\mathbf{Q}(t_n)$ and $u(t_n)$ appear in discrete operators or norms.

We define the error functions at time $t_n$ and the corresponding
difference quotients as follows:
\ba
\bigl(\be_{\mathbf Q}^{n},e_u^{n},e_s^{n}\bigr)
&:=
\bigl(
\mathbf Q_h^n-\mathbf Q(t_n),
u_h^n-u(t_n),
s_h^n-s(t_n)
\bigr),\\
\bigl(\be_{\mathbf Q}^{n,*},e_u^{n,*},e_s^{n,*}\bigr)
&:=
\bigl(
\mathbf Q_h^{n,*}-\mathbf Q(t_{n+1}),
u_h^{n,*}-u(t_{n+1}),
s_h^{n,*}-s(t_{n+1})
\bigr),\\
(\ea,\eb,\ec)
&:=
\frac{1}{\tau}
\bigl(
\be_{\mathbf Q}^{n+1}-\be_{\mathbf Q}^{n},
e_u^{n+1}-e_u^{n},
e_s^{n+1}-e_s^{n}
\bigr),\\
\delta_\tau\mathbf Q(t_{n+1})
&:=
\frac{\mathbf Q(t_{n+1})-\mathbf Q(t_n)}{\tau},
\qquad
\delta_\tau u(t_{n+1})
:=
\frac{u(t_{n+1})-u(t_n)}{\tau}.
\ed
 The initial errors are defined by
\begin{equation}\label{eqcv1}
\begin{aligned}
    \be_{\mathbf Q}^0
    :=\mathbf Q_h^0-\mathbf Q(0),\quad
    e_u^0:=u_h^0-u(0),\quad
    e_s^0
    :=s_h^0-s(0)
    =s_h^0-E_1[\mathbf Q(0),u(0)],
\end{aligned}
\end{equation}
Under  \eqref{eqa_19} and the second-order consistency of the spatial discretization, the initial errors satisfy
\begin{equation}\label{eq_initial_error_bound}
    \|\be_{\mathbf{Q}}^{0}\|_{H_h^1}
    +\|e_u^{0}\|_{H_h^2}
    +|e_s^{0}|
    =    \left|
    E_{1h}[\mathbf{Q}(0),  u(0)]
    -
    E_1[\mathbf{Q}(0),u(0)]
    \right|
    \le Ch^2,
\end{equation}
where $C$ is a positive constant independent of $\tau$ and $h$.

\begin{theorem}[Fully discrete error estimate]\label{them4_1}
Under the regularity assumptions in \eqref{eqa_19}, when $\tau$ and $h$
are sufficiently small, the GSAV-ETD2 scheme \eqref{eq4_6}--\eqref{eq4_12}
satisfies the following error estimate:
\ba
\|\be_{\mathbf{Q}}^{n}\|_{H_h^1}
 + \|e_u^{n}\|_{H_h^2}
 + |e_s^{n}|
\le  C_{err}(\tau^2 + h^2),
\qquad 0\le n\le N_T:=\lfloor T/\tau\rfloor.\label{eq4a}
\ed
\end{theorem}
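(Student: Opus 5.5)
We argue by induction on $n$, using the coercive nodal and stage reformulations of Lemma~\ref{lem:etd2_reformulation} in place of the exponential form, together with a bootstrap hypothesis on the numerical solution. The hypothesis is that for all $k\le n$ the errors satisfy $\|\be_{\mathbf Q}^k\|_{H_h^1}+\|e_u^k\|_{H_h^2}+|e_s^k|\le \tau^{3/2}+h^{3/2}$, and likewise for the stage errors. Combined with the regularity \eqref{eqa_19} and the discrete energy bound of Theorem~\ref{them3_1}, this gives uniform bounds $\|\bQ^k\|_\infty+\|u_h^k\|_\infty+\|\nabla_h u_h^k\|_{\infty}\le C$. The bound on $\|D_h^2u_h^k\|$ requires more care. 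We control it via an inverse inequality applied to the error, $\|D_h^2 e_u^k\|_\infty\le Ch^{-d/2}\|e_u^k\|_{H_h^2}$, which in turn calls for an auxiliary higher-order estimate on $\|\Delta_h e_u\|$. Alternatively, we use the dissipative $\psi_1$-terms that appear with $\mathcal D_h$ to control $\|e_u\|_{H_h^4}$-type quantities in a weighted sense. The same bounds keep $E_{1h}^k$ and $s_h^k$ in a compact set, so $g_h^k=1+\mathcal O(|e_s^k|+\|\be_{\mathbf Q}^k\|+\|e_u^k\|_{H_h^2})$ and $\zeta$ is Lipschitz there.

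The first step is the error equations. For the predictor we subtract the exact solution inserted into \eqref{eqc3}--\eqref{eqc5}. Since $\psi(z)=1-z/2+\mathcal O(z^2)$, the term $\psi(\tau\mLQ)\delta_\tau\mathbf Q-\partial_t\mathbf Q$ is only first order, so the predictor truncation error is $\mathcal O(\tau+h^2)$ in $L^2$, which is sufficient for a stage. We test the predictor error equation with $\dtpred\be_{\mathbf Q}^{n,*}$ and $\mLQ \be_{\mathbf Q}^{n,*}$ (respectively $\mLu e_u^{n,*}$). The nonlinear differences $\mNQ^n-\mathbf N(t_n)$ and $\mNu^n-\mathcal N(t_n)$ are Lipschitz in the errors under the bootstrap bounds. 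The derivative-dependent parts $\mathbf H_{{\rm d},h}$ and $\mu_{{\rm d},h}$ are handled by summation by parts using \eqref{eq_discrete_hessian_adjoint} and \eqref{eq_discrete_hessian_bound}, which moves the Hessian onto the test function where the $\mathcal D_h$-norm absorbs it. This yields
\[
\|\be_{\mathbf Q}^{n,*}\|_{\mathcal L_h}+\|e_u^{n,*}\|_{\mathcal D_h}\le C\bigl(\|\be_{\mathbf Q}^n\|_{\mathcal L_h}+\|e_u^n\|_{\mathcal D_h}+|e_s^n|\bigr)+C\tau(\tau+h^2).
\]
For the corrector we use the nodal form \eqref{eq:etd2_nodal_Q}--\eqref{eq:etd2_nodal_u}. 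By the Taylor expansion of $\psi$ and $\varphi_2/\varphi_1=\tfrac12+\mathcal O(z)$, the exact solution satisfies it up to $\mathcal O(\tau^2+h^2)$, provided the $\tau\mathcal L_h$-terms are paired with $\partial_t$ of the linear part; this is where $W^{2,\infty}$ in time and $H^4$ and $H^6$ in space are needed. We then test with $\ea$ and $\eb$ and use \eqref{eq:etd2_coercive_testing}. The term $\psi_1\ge1$ controls $\|\ea\|_h^2$ uniformly in $h$, and the telescoping term gives $\frac1{2\tau}(\|\be^{n+1}_{\mathbf Q}\|^2_{\mathcal L_h}-\|\be^n_{\mathbf Q}\|^2_{\mathcal L_h})$. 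On the right-hand side, $\mNQ^{n}+\frac{\varphi_2}{\varphi_1}(\mNQ^{n,*}-\mNQ^n)$ minus its exact counterpart is bounded by Lipschitz differences at $n$ and at the stage; Young's inequality absorbs the $\ea$ factors. The factor $\varphi_2/\varphi_1\in[\tfrac12,1]$ from Lemma~\ref{lem_phi_ineq} is a bounded operator that commutes with everything, so no $h$-dependence enters.

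The scalar error $e_s$ is estimated from \eqref{eq4_8} and \eqref{eq4_12} against the exact relation $s(t_{n+1})-s(t_n)=-\int(\mathbf N:\partial_t\mathbf Q+\mathcal N\partial_tu)$, discretized by the trapezoidal rule. Using Remark~\ref{rem:energy_closing_correction_terms}, the correction terms $\frac14\langle\mathcal L_h\dx\bQ^{n+1},\dx\bQ^{n+1}\rangle_h$ contribute $\mathcal O(\tau^3)$ up to error terms. This gives $|e_s^{n+1}|\le|e_s^n|+C\tau(\cdots)+C\tau(\tau^2+h^2)$, with $(\cdots)$ containing $\|\ea\|_h$, which is absorbed. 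Summing, the discrete Gronwall inequality yields \eqref{eq4a}, and for $\tau,h$ small $C_{err}(\tau^2+h^2)\le\tau^{3/2}+h^{3/2}$ closes the bootstrap.

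The main obstacle will be the $u$-equation nonlinearity $\mu_{{\rm d},h}$, which contains $D_h^2u_h$ and $\operatorname{div}_{h,c}^2(\mathbf M_h u_h)$. Its error difference involves $D_h^2 e_u$ multiplied by $\mathbf M_h$, and also $\be_{\mathbf Q}$ multiplied by $D_h^2u_h$; the latter requires an $L^\infty$ bound on $D_h^2 u_h$ or an $H^2$-type bound on $\be_{\mathbf Q}$, which we do not have. We will first try testing the $\mathbf Q$-error equation additionally with $\mLQ\ea$-type quantities, which yields a bound on $\tau\sum\|\Delta_h\be_{\mathbf Q}\|^2$ from the $\psi_1$-weighted term, together with $\|\Delta_h^2 e_u\|$ from the $\mathcal D_h$ side. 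If $L^\infty$ control of the numerical Hessian is still needed, we will obtain it from the higher-order bootstrap via an inverse inequality at order $h^{3/2-d/2}$, which requires $d\le3$.
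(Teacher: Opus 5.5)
There is a genuine gap. Your overall architecture matches the paper: the coercive nodal form, testing with $\ea$ and $\eb$, the $\psi_1\ge1$ control, the trapezoidal scalar equation with the $\mathcal O(\tau^3)$ correction terms, and Gr\"onwall. But the step you yourself flag as the main obstacle is left open, and the fixes you propose would not give the stated estimate with $\tau$ and $h$ independent.

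Your bootstrap only controls $\|\be_{\mathbf Q}^k\|_{H_h^1}+\|e_u^k\|_{H_h^2}+|e_s^k|$. Every higher-norm bound on the numerical solution therefore has to come from an inverse inequality, and each one costs a step-size restriction:
\begin{itemize}
\item $\|D_h^2e_u^k\|_\infty\le Ch^{-d/2}(\tau^{3/2}+h^{3/2})$ is bounded only if $\tau\lesssim h^{d/3}$;
\item $\|\be_{\mathbf Q}^k\|_{H_h^2}\le Ch^{-1}(\tau^{3/2}+h^{3/2})$ is bounded only if $\tau^{3/2}\lesssim h$;
\item even the bound $\|\bQ^k\|_\infty\le C$ you claim at the outset needs $\tau^3\lesssim h$ in $d=3$, since $H_h^1$ does not embed uniformly in $L^\infty$.
\end{itemize}
Your closing remark about an inverse inequality ``at order $h^{3/2-d/2}$'' silently drops the $\tau^{3/2}$ part of your own hypothesis.

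Your two fallback routes do not help either. The $\psi_1$ route fails because $\psi_1(z)\ge\max\{1,z/2\}$, so the coercive term controls only $\|\eb\|_h^2$ and $\frac{\tau}{2}\langle\mLu\eb,\eb\rangle_h$. That is a $\tau$-degenerate weight and gives no pointwise high-norm bound. Testing the $\mathbf Q$-error equation with $\mLQ\ea$ produces $\langle F,\mLQ\ea\rangle_h$, with $F$ the right-hand side of \eqref{eq_error_Q_final}, which your $L^2$ Lipschitz bounds cannot close. Young's inequality against $\frac{\tau}{2}\|\mLQ\ea\|_h^2$ loses a factor $\tau^{-1}$, and the alternative needs $\|\mLQ^{1/2}F\|_h$, i.e.\ third differences of $e_u$ through $u_hD_h^2u_h$.

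These bounds are not cosmetic. The scalar equation contains $e_s^{n+1}\langle\mNu^n+\mNu^{n,*},\eb\rangle_h$, and the only uniform control of $\eb$ is in $\|\cdot\|_h$. Bounding $\|\mNu^n\|_h$ involves $\operatorname{div}_{h,c}^{2}(\mathbf M_hu_h)$, hence $\|\bQ^n\|_{H_h^2}$, which neither the energy law nor your bootstrap bounds pointwise in time.

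What is missing is a coupling-free source for these bounds, and the paper supplies it as follows.
\begin{itemize}
\item It bootstraps only the lower bound $s_h^n\ge -C_s$ in \eqref{eq:bootstrap_discrete}. The energy law then gives error-independent $H_h^1\times H_h^2$ bounds and two-sided bounds on $g_h^n$ and $g_h^{n,*}$.
\item Theorem~\ref{th4_1} upgrades these bounds. It writes the scheme as a discrete Duhamel formula with the piecewise-linear interpolated nonlinearity and applies the smoothing estimate $\vertiii{A^re^{-tA}}_h\le C_rt^{-r}$, $r<1$, of Lemma~\ref{lem_analytic_semigroup}. It alternates between the two equations: $\mathcal A^{1-\epsilon/2}\bQ^n$, then $\mathcal A^{-\epsilon/2}\mNu^n$, then $(-\Delta_h)^{2-\epsilon}u_h^n$, then $\mathcal A^{\epsilon/2}\mNQ^n$, then $\|\Delta_h\bQ^n\|_h$.
\item This yields $\mathcal M_Q$, $\mathcal M_u$ and $\mathcal M_1$ independent of $\tau$ and $h$, with no inverse inequality.
\end{itemize}

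For the $H_h^2$ error of $\mathbf Q$ required by the $u$-Lipschitz bound, the paper tests the time-shifted $\mathbf Q$-error equation with $\mLQ\be_{\mathbf Q}^n$, not $\mLQ\ea$. This places $\|\mLQ\be_{\mathbf Q}^n\|_h^2\ge C_L^{-2}\|\be_{\mathbf Q}^n\|_{H_h^2}^2$ on the left. On the right it needs only the $L^2$ Lipschitz bound for $\mNQ$, i.e.\ lower-order errors at step $n-1$. Added with weight $C_E$ to the main inequality, it cancels the $\|\be_{\mathbf Q}^n\|_{H_h^2}^2$ term there before Gr\"onwall. The $s_h$-bootstrap then closes through $|e_s^n|\le C_{err}(\tau^2+h^2)\le\frac12$, with no relation between $\tau$ and $h$.
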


Although Theorem~\ref{them3_1} provides unconditional dissipation of
the modified energy, the auxiliary variable $s_h$ enters
\eqref{energy_discrete} linearly. Consequently, without an a priori
lower bound for $s_h^n$, the discrete energy law alone does not yield
separate coercive bounds for the physical variables required in the
nonlinear error analysis. We therefore introduce a temporary bootstrap
lower bound for $s_h^n$, derive the necessary uniform estimates, and
subsequently close this assumption using the final error bound.

Let $M_s=\|s(t)\|_{L^\infty(0,T)}$ and choose $C_s=M_s+1$.
We make the following temporary bootstrap
assumption:
\begin{equation}
\label{eq:bootstrap_discrete}
s_h^n\ge -C_s,\qquad 0\le n\le m,\qquad \text{for}\quad 0\le m\le N_T.
\end{equation}
This is valid at $m=0$, since $s_h^0=E_1[\mathbf{Q}(0),u(0)]+e_s^0
\ge -M_s-Ch^2>-M_s-1=-C_s$ for sufficiently small $h$.
Once \eqref{eq4a} has been verified for $0\le n\le m+1$ under \eqref{eq:bootstrap_discrete}, a standard
induction argument extends the bound to all $0\le n\le N_T$.
\subsection{Uniform boundedness of the scalar factors $g_h^n$ and $g_h^{n,*}$}
~
\begin{lemma}\label{gn_bound0}
    Under the bootstrap assumption \eqref{eq:bootstrap_discrete}, the scalar factor $g_h^n$ defined in \eqref{eq_s_ex_n} admits positive bounds $G_*$ and $G^*$, independent of $\tau$ and $h$, such that
    \begin{equation}
        0 <G_*\le g_h^n \le G^*< \infty, \quad 0\le n\le m.
    \end{equation}
\end{lemma}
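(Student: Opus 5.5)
We need to bound $g_h^n=\zeta(s_h^n)/\zeta(E_{1h}^n)$ above and below. Since $\zeta$ is positive, continuous and nondecreasing, it suffices to show that both $s_h^n$ and $E_{1h}^n$ lie in a fixed compact interval $[-A,A]$, with $A$ independent of $\tau$, $h$ and $n\le m$. Then
\[
g_h^n\in\left[\frac{\zeta(-A)}{\zeta(A)},\,\frac{\zeta(A)}{\zeta(-A)}\right],
\]
so we may take $G_*=\zeta(-A)/\zeta(A)$ and $G^*=\zeta(A)/\zeta(-A)$.

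\textbf{Step 1: bounds on $s_h^n$ and the physical variables.} The lower bound $s_h^n\ge -C_s$ is exactly the bootstrap assumption \eqref{eq:bootstrap_discrete}. For the upper bound, Theorem~\ref{them3_1} gives $\mathcal E_h^n\le\mathcal E_h^0$. Because the quadratic part of $\mathcal E_h$ is nonnegative, this yields $s_h^n\le\mathcal E_h^0$. The quantity $\mathcal E_h^0$ is bounded independently of $h$, since $\mathbf Q_h^0,u_h^0$ are grid restrictions of smooth data and $s_h^0=E_{1h}^0$. Combining the same energy law with $s_h^n\ge -C_s$ gives
\[
\frac K2\|\nabla_h\mathbf Q_h^n\|_h^2+B_0\|\Delta_h u_h^n\|_h^2+\frac\kappa2\bigl(\|\mathbf Q_h^n\|_h^2+\|u_h^n\|_h^2\bigr)\le \mathcal E_h^0+C_s,
\]
which is a uniform bound on $\|\mathbf Q_h^n\|_{H_h^1}$ and $\|u_h^n\|_{H_h^2}$.

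\textbf{Step 2: bound on $E_{1h}^n$.} This is the main obstacle: we must control each term of $E_{1h}$ uniformly in $h$ using only the Step 1 norms.
\begin{itemize}
\item The term $\langle f_{\mathrm{bn}}(\mathbf Q_h^n),1\rangle_h$ involves up to quartic powers of $\mathbf Q_h$. Use the discrete Sobolev embedding $H_h^1\hookrightarrow L_h^6$ (hence $L_h^4$) for $d\le3$ on periodic grids.
\item The term $\langle f_{\mathrm{bs}}(u_h^n),1\rangle_h$ is handled by $H_h^2\hookrightarrow L_h^\infty$ for $d\le3$.
\item The term $B_0q^4\|\mathbf M_hu_h\|_h^2$ is at most $C\bigl(\|\mathbf Q_h\|_{L_h^4}^2+1\bigr)\|u_h\|_\infty^2$.
\item The cross term satisfies
\[
\bigl|\langle D_h^2u_h,\mathbf M_hu_h\rangle_h\bigr|\le\|D_h^2u_h\|_h\,\|\mathbf M_h\|_h\,\|u_h\|_\infty\le\|\Delta_hu_h\|_h\,\|\mathbf M_h\|_h\,\|u_h\|_\infty,
\]
using \eqref{eq_discrete_hessian_bound}.
\item The $\kappa$-terms are bounded directly by Step 1.
\end{itemize}
Together these give $|E_{1h}^n|\le C$.

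If a lower bound on $E_{1h}^n$ is wanted independently of the upper one, it also follows from the absolute-value bounds above, so no separate coercivity argument in the spirit of Lemma~\ref{lem_continuous_regularity} is needed. The points requiring care are that the discrete embedding constants are $h$-independent, which is standard for periodic finite differences via discrete Fourier analysis, and that $\mathcal E_h^0$ is $h$-uniformly bounded.

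Setting $A=\max\{C_s,\ \mathcal E_h^0,\ C\}$ then concludes the proof.
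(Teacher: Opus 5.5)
Your proposal is correct and follows essentially the same route as the paper: the energy law plus the bootstrap assumption trap $s_h^n$ in $[-C_s,\mathcal E_h^0]$ and give uniform $H_h^1$/$H_h^2$ bounds; discrete Sobolev embeddings then bound $|E_{1h}^n|$; positivity and monotonicity of $\zeta$ finish the argument. Your remark that $\mathcal E_h^0$ must be bounded uniformly in $h$ makes explicit a point the paper leaves implicit.
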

\begin{proof}
Using the discrete energy definition \eqref{energy_discrete},
the energy law established in Theorem~\ref{them3_1},
and the bootstrap bound \eqref{eq:bootstrap_discrete}, we obtain
    \ba\label{eq_a5}
   -C_s \le  s_h^n \le \mathcal{E}_h^n \le \mathcal{E}_h^0,
   \quad 0\le n\le m.
    \ed
It then follows from \eqref{energy_discrete} that
    \begin{equation}\label{eqa_p}
      \frac{K}{2} \|\nabla_h \mathbf{Q}_h^n\|_h^2  + B_0 \left\| \Delta_h u_h^n \right\|_h^2 + \frac \kappa2 (\|\mathbf{Q}_h^n\|_h^2 + \|u_h^n\|_h^2) \le \mathcal{E}_h^0 + C_s, \quad 0\le n\le m.
    \end{equation}
 Consequently, by the discrete norm equivalences, there exists a positive constant $C_1$, independent of $\tau$ and $h$, such that
    \begin{equation}\label{eqc7}
        \|\mathbf{Q}_h^n\|_{H_h^1}^2 + \|u_h^n\|_{H_h^2}^2 \le C_1(\mathcal{E}_h^0,C_s), \quad 0\le n\le m.
    \end{equation}
 Using \eqref{eqc7} and the discrete Sobolev embeddings,
 there exists a positive constant $\widetilde{C}^*$, independent of $\tau$ and $h$, such that
    \begin{align}
 | E_{1h}^n|&\le C\Bigl(
\left|\left\langle D_h^2u_h^n,\bM_h^nu_h^n\right\rangle_h\right|
+\|\bM_h^nu_h^n\|_h^2
+\left|\left\langle f_{\mathrm{bn}}(\mathbf Q_h^n),1\right\rangle_h\right|
+\left|\left\langle f_{\mathrm{bs}}(u_h^n),1\right\rangle_h\right|
+\|u_h^n\|_h^2+\|\mathbf Q_h^n\|_h^2\Bigr)\no\\
&\le C\Bigl(
\|D_h^2u_h^n\|_h\|\bM_h^n\|_h\|u_h^n\|_\infty
+\|\bM_h^n\|_h^2\|u_h^n\|_\infty^2
+1+\|\mathbf Q_h^n\|_{H_h^1}^4+\|u_h^n\|_\infty^4\Bigr)
\le \widetilde{C}^*.\label{eq_E1_bounds}
    \end{align}
    Combining \eqref{eq_a5} and \eqref{eq_E1_bounds}, we have
    \begin{equation*}
        0< G_*:= {\frac{\zeta(-C_s)}{\zeta(\widetilde{C}^*)}}\le g_h^n \le {\frac{\zeta(\mathcal{E}_h^0)}{\zeta(-\widetilde{C}^*)}} =: G^*, \quad 0\le n\le m,
    \end{equation*}
which completes the proof.
\end{proof}

\begin{lemma} \label{lem_analytic_semigroup}
    For any finite-dimensional self-adjoint positive-semidefinite operator $A$, the discrete analytic semigroup $e^{-tA}$ satisfies the smoothing estimate \cite{pazy2012semigroups}
    \begin{equation} \label{eq_smoothing_estimate}
        \vertiii{A^r e^{-tA}}_h\le \sup_{\lambda \ge 0} \lambda^r e^{-t\lambda} = C_r t^{-r}, \quad \forall t > 0, \; r \geq 0,
    \end{equation}
    where $\vertiii{\cdot}_h$ denotes the induced $\ell^2$ operator norm and $C_r = (r/e)^r$ depends only on $r$.
\end{lemma}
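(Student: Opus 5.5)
The plan is to diagonalize $A$ and reduce the operator estimate to a scalar maximization. Since $A$ is a self-adjoint positive-semidefinite operator on the finite-dimensional space $E_h^{\mathrm{per}}$ (or $\mathbb M_h^{(d)}$), equipped with the inner product $\langle\cdot,\cdot\rangle_h$, the spectral theorem gives an $\langle\cdot,\cdot\rangle_h$-orthonormal eigenbasis $\{\phi_k\}$ with eigenvalues $\lambda_k\ge0$. In that basis $A^re^{-tA}$ is defined by functional calculus and acts as $\phi_k\mapsto\lambda_k^re^{-t\lambda_k}\phi_k$. It is therefore self-adjoint and nonnegative. For $r=0$ we interpret $\lambda^0=1$, including at $\lambda=0$.

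The first step is to bound the induced $\ell^2$ norm. For any $V=\sum_k c_k\phi_k$, Parseval's identity gives
\[
\|A^re^{-tA}V\|_h^2=\sum_k \lambda_k^{2r}e^{-2t\lambda_k}c_k^2\le \Bigl(\max_k\lambda_k^re^{-t\lambda_k}\Bigr)^2\|V\|_h^2 .
\]
Hence $\vertiii{A^re^{-tA}}_h=\max_k\lambda_k^re^{-t\lambda_k}\le\sup_{\lambda\ge0}\lambda^re^{-t\lambda}$. This step is the same argument as the spectral mapping step behind \eqref{eq_norm_bounds_1}.

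The second step computes the scalar supremum. For $r>0$, set $f(\lambda)=\lambda^re^{-t\lambda}$ on $[0,\infty)$. Then $f(0)=0$ and $f(\lambda)\to0$ as $\lambda\to\infty$. Also $f'(\lambda)=\lambda^{r-1}e^{-t\lambda}(r-t\lambda)$, so the unique critical point is $\lambda=r/t$, and it is the maximizer. This gives
\[
\sup_{\lambda\ge0}f(\lambda)=(r/t)^re^{-r}=(r/e)^r\,t^{-r}.
\]
For $r=0$ the supremum is $1=(0/e)^0$, with the convention $0^0=1$.

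No step here is a genuine obstacle. The only care needed is to note that the bound is independent of the spectrum of $A$, and hence of $h$, because the supremum is taken over all $\lambda\ge0$ rather than over the eigenvalues. That $h$-independence is what makes the estimate usable in the fully discrete analysis.
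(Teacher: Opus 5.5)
Your argument is correct and complete. The paper gives no proof of its own beyond citing Pazy's general theory of analytic semigroups. Your route of diagonalizing $A$ in a $\langle\cdot,\cdot\rangle_h$-orthonormal eigenbasis and then maximizing $\lambda^r e^{-t\lambda}$ is, however, exactly what the intermediate bound $\sup_{\lambda\ge0}\lambda^r e^{-t\lambda}$ in the statement encodes. Compared with the cited abstract result, your finite-dimensional spectral argument buys three things:
\begin{itemize}
\item It gives the sharp constant $C_r=(r/e)^r$, whereas the abstract result only yields $\|A^r e^{-tA}\|\le M_r t^{-r}$ with an unspecified $M_r$.
\item It makes the independence of the bound from $h$ transparent, since the supremum runs over all $\lambda\ge0$ rather than over the spectrum.
\item It handles the singular semidefinite case directly through the convention $0^r=0$ for $r>0$. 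This case arises in the paper, for example with $\mathcal B=2B_0\Delta_h^2$, which annihilates constants, while Pazy formulates the estimate for generators with $0\in\rho(A)$ so that fractional powers are defined.
\end{itemize}
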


\begin{lemma}[Discrete fractional multiplication]\label{lem_discrete_fractional_product}
Let $d\le3$, $r>d/2$, and $0\le s\le r$. On the uniform periodic grid,
there is a constant $C=C(r,s,d,L_d)$, independent of $h$, such that
\begin{align}
\|v_hw_h\|_{H_h^s}
&\le C\|v_h\|_{H_h^r}\|w_h\|_{H_h^s},
\label{eq_discrete_fractional_multiplier}\\
\|v_h\|_\infty
&\le C\|v_h\|_{H_h^r},
\label{eq_discrete_fractional_embedding}
\end{align}
where $\|v_h\|_{H_h^s}
:=
\|(I-\Delta_h)^{s/2}v_h\|_h$.
The same estimates hold componentwise for tensor-valued grid
functions. Moreover, for every $0\le\rho<1$,
\begin{align}
c_\rho\|\mathbf V_h\|_{H_h^{2\rho}}
&\le \|\mLQ^\rho \mathbf V_h\|_h
\le C_\rho\|\mathbf V_h\|_{H_h^{2\rho}},\notag\\
c_\rho\|v_h\|_{H_h^{4\rho}}
&\le \|\mLu^\rho v_h\|_h
\le C_\rho\|v_h\|_{H_h^{4\rho}}.
\label{eq_operator_fractional_equivalence}
\end{align}
where $c_\rho,C_\rho>0$ are independent of $h$.
\end{lemma}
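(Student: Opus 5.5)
The plan is to work entirely in discrete Fourier space on the periodic grid, where every operator in the statement is diagonal. For $v_h\in E_h^{\mathrm{per}}$ I will write $v_h=\sum_{k\in\widehat{\Lambda}_J}\hat v_k e^{2\pi i k\cdot x/L_d}$. Here $\widehat{\Lambda}_J$ is the symmetric index box $\{-J/2+1,\dots,J/2\}^d$, and with the normalization of $\langle\cdot,\cdot\rangle_h$ Parseval reads $\|v_h\|_h^2=L_d^d\sum_k|\hat v_k|^2$. The eigenvalue of $-\Delta_h$ on the $k$-th mode is $\lambda_k=\frac{4}{h^2}\sum_{j}\sin^2(\pi k_jh/L_d)$. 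Since $|\pi k_jh/L_d|\le\pi/2$ on the box, the elementary bound $\frac{2}{\pi}|x|\le|\sin x|\le|x|$ gives $c|k|^2\le\lambda_k\le C|k|^2$ with constants independent of $h$. Consequently $\|v_h\|_{H_h^s}^2\simeq\sum_k\langle k\rangle^{2s}|\hat v_k|^2$, where $\langle k\rangle:=(1+|k|^2)^{1/2}$. All three claims then reduce to weighted $\ell^2$ estimates on Fourier coefficients.

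I will prove the operator equivalences \eqref{eq_operator_fractional_equivalence} first, since they are purely spectral. $\mLQ$ acts on each mode by $K\lambda_k+\kappa$, and because $\kappa>0$ we have $\min(K,\kappa)(1+\lambda_k)\le K\lambda_k+\kappa\le\max(K,\kappa)(1+\lambda_k)$. Raising to the power $\rho$ and summing against $|\hat v_k|^2$ gives the first equivalence directly from the definition of $\|\cdot\|_{H_h^{2\rho}}$ through $(I-\Delta_h)^{\rho}$. Likewise, $\mLu$ has symbol $2B_0\lambda_k^2+\kappa\simeq(1+\lambda_k)^2$, which yields the second equivalence. Tensor-valued functions are handled componentwise. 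The embedding \eqref{eq_discrete_fractional_embedding} also follows quickly: $|v_h(x)|\le\sum_k|\hat v_k|\le\bigl(\sum_k\langle k\rangle^{-2r}\bigr)^{1/2}\bigl(\sum_k\langle k\rangle^{2r}|\hat v_k|^2\bigr)^{1/2}$. The first factor is bounded uniformly in $J$ because $r>d/2$.

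For the product estimate \eqref{eq_discrete_fractional_multiplier}, the pointwise product on the grid corresponds to a convolution with aliasing: $\widehat{(vw)}_k=\sum_{m}\hat v_m\hat w_{[k-m]}$, where $[k-m]$ denotes the representative of $k-m$ modulo $J$ in the box. The key weight inequality I aim for is $\langle k\rangle\le C\bigl(\langle m\rangle+\langle[k-m]\rangle\bigr)$. If no wrap-around occurs this is the triangle inequality. If wrap-around occurs, then $k=m+[k-m]+lJ$ with $l\ne0$ and all three indices lie in the box, which forces $|m|+|[k-m]|\gtrsim J\gtrsim|k|$. Next I split the sum according to whether $\langle m\rangle\ge\langle[k-m]\rangle$.
\begin{itemize}
\item On the part where $\langle m\rangle\ge\langle[k-m]\rangle$, I use $\langle m\rangle^{s}\le\langle m\rangle^{r}\langle[k-m]\rangle^{s-r}$, valid because $s\le r$. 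Young's inequality $\|a*b\|_{\ell^2}\le\|a\|_{\ell^2}\|b\|_{\ell^1}$ then applies with $a_m=\langle m\rangle^r|\hat v_m|$ and $b_n=\langle n\rangle^{s-r}|\hat w_n|$. By Cauchy--Schwarz, $\|b\|_{\ell^1}\le\|\langle\cdot\rangle^{-r}\|_{\ell^2}\,\|w_h\|_{H_h^s}$.
\item On the complementary part, $\langle k\rangle^s\lesssim\langle[k-m]\rangle^s$. I apply Young with $\|\hat v\|_{\ell^1}\lesssim\|v_h\|_{H_h^r}$ and $\|\langle\cdot\rangle^s\hat w\|_{\ell^2}$.
\end{itemize}
Since $n\mapsto[n]$ restricted to the box is a bijection, Young's inequality remains valid for the aliased convolution. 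Combining the two parts gives the claim, again componentwise for tensors.

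The main obstacle I expect is exactly this aliasing step: verifying $\langle k\rangle\lesssim\langle m\rangle+\langle[k-m]\rangle$ uniformly in $J$ on the box, together with keeping the $h$-dependent normalization constants in Parseval consistent. Everything else is routine Fourier-multiplier bookkeeping.
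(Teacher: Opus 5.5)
Your proposal is correct and follows the same route as the paper: you diagonalize $-\Delta_h$, $\mathcal{L}_h$ and $\mathcal{D}_h$ by the discrete Fourier transform, use the uniform equivalence of $1+\lambda_h(\boldsymbol k)$ with $1+|\boldsymbol k|^2$ (and of the operator symbols with powers of $1+\lambda_h$), and bound the product through the discrete convolution formula with Cauchy--Schwarz. Your version is more detailed than the paper's sketch: it makes explicit the aliasing bound $\langle k\rangle\lesssim\langle m\rangle+\langle[k-m]\rangle$ and the two-case Young-inequality splitting, both of which the paper leaves implicit.
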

\begin{proof}
The discrete Fourier transform diagonalizes $-\Delta_h$, $\mLQ$, and
$\mLu$. On the fundamental frequency lattice, the symbol
$1+\lambda_h(\boldsymbol k)$ of $I-\Delta_h$ is uniformly equivalent
to $1+|\boldsymbol k|^2$. The discrete Fourier convolution formula,
followed by Cauchy--Schwarz, therefore gives
\eqref{eq_discrete_fractional_multiplier} and
\eqref{eq_discrete_fractional_embedding} with constants independent of
the number of grid points. Since the symbols
$\kappa+K\lambda_h(\boldsymbol k)$ and
$\kappa+2B_0\lambda_h(\boldsymbol k)^2$ are uniformly equivalent to
$1+\lambda_h(\boldsymbol k)$ and
$(1+\lambda_h(\boldsymbol k))^2$, respectively, the same spectral
comparison gives \eqref{eq_operator_fractional_equivalence}.
\end{proof}
\begin{lemma}\label{lem_regularit}
 Under the bootstrap assumption \eqref{eq:bootstrap_discrete}, there exists a constant $C>0$, independent of $\tau$ and $h$, such that the following regularity estimates hold for  $0\le n\le m$:
\begin{align}
\|\bQ^{n,*}\|_{H_h^p}
&\le
C\|\bQ^n\|_{H_h^p}
 + C_{\mathbf N}(C_1),\quad 1\le p\le 2 \label{eq:Q-regularity}\\
\|u_h^{n,*}\|_{H_h^2}
&\le
C(C_1,C_{\mathcal N}). \label{eq:u-regularity}
\end{align}
In particular, the predictor preserves the available spatial regularity:
an $H_h^p$-bound for $\bQ^n$ yields the corresponding $H_h^p$-bound
for $\bQ^{n,*}$ for every $1\le p\le2$.
\end{lemma}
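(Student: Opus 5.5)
The plan is to work directly with the exponential form of the predictor, \eqref{eq4_6}--\eqref{eq4_7}. The two terms are handled separately: the propagator $e^{-\tau\Lambda_h}$, and the $\varphi_1$-weighted nonlinearity, with the smoothing estimate of Lemma~\ref{lem_analytic_semigroup} used to gain spatial regularity.

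\textbf{Linear part.} Since $e^{-\tau\mLQ}$ and $e^{-\tau\mLu}$ are functions of $-\Delta_h$, they commute with $I-\Delta_h$ and are contractions, because $0\le e^{-z}\le 1$. Hence $\|e^{-\tau\mLQ}\bQ^n\|_{H_h^p}\le\|\bQ^n\|_{H_h^p}$ for all $p$, and likewise $\|e^{-\tau\mLu}u_h^n\|_{H_h^2}\le\|u_h^n\|_{H_h^2}\le C_1$ by \eqref{eqc7}.

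\textbf{Nonlinear part, $\mathbf Q$.} I write
\[
\tau\varphi_1(\tau\mLQ)\mNQ^n=\int_0^\tau e^{-(\tau-\sigma)\mLQ}\mNQ^n\,d\sigma .
\]
Using Lemma~\ref{lem_discrete_fractional_product} with $\rho=p/2$, and treating $p=2$ through $\mLQ$ directly, I estimate
\[
\|\mLQ^{\rho}e^{-(\tau-\sigma)\mLQ}\mNQ^n\|_h\le C(\tau-\sigma)^{-(\rho-\theta)}\|\mLQ^{\theta}\mNQ^n\|_h ,
\]
where $\theta\ge0$ is the regularity available for $\mNQ^n$. This is integrable provided $\rho-\theta<1$. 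By Lemma~\ref{gn_bound0}, $|g_h^n|\le G^*$.

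For $\muQ^n$, the polynomial part $\mathbf H_{\rm p}$ lies in $H_h^1$. This follows from \eqref{eqc7}, the discrete fractional product estimate \eqref{eq_discrete_fractional_multiplier} with $r=2>d/2$ for $u_h^n$, and the fact that $\mathbf Q_h^n\in H_h^1$ gives products in $L^2$ via discrete $L^6$ embedding. Where needed I take $\theta$ small, say $\|\mathbf H_{\rm p}\|_h\le C(C_1)$. The derivative part $u_h^nD_h^2u_h^n$ is only in $L^2$, since $D_h^2u_h^n\in L^2$ and $u_h^n\in L^\infty$. So with $\theta=0$ we need $\rho<1$, i.e.\ $p<2$. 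The endpoint $p=2$ then gives
\[
\tau\,\|\mLQ\,\varphi_1(\tau\mLQ)\mNQ^n\|_h=\|(I-e^{-\tau\mLQ})\mNQ^n\|_h\le 2\|\mNQ^n\|_h ,
\]
using $z\varphi_1(z)=1-e^{-z}\le1$. So for every $1\le p\le2$, the nonlinear contribution is bounded by $C\|\mNQ^n\|_h$ up to norm equivalence, with the low-order $L^2$ part bounded trivially by $\tau\|\mNQ^n\|_h$. This gives \eqref{eq:Q-regularity} with $C_{\mathbf N}(C_1)=CG^*\sup\|\muQ^n\|_h$.

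\textbf{Nonlinear part, $u$.} The same identity $\tau\mLu\varphi_1(\tau\mLu)=I-e^{-\tau\mLu}$ gives
\[
\|\mLu^{1/2}\tau\varphi_1(\tau\mLu)\mNu^n\|_h=\|\mLu^{-1/2}(I-e^{-\tau\mLu})\mNu^n\|_h\le C\|\mLu^{-1/2}\mNu^n\|_h\le C\|\mNu^n\|_{H_h^{-2}} .
\]
This uses $\mLu^{1/2}\sim H_h^2$ from \eqref{eq_operator_fractional_equivalence}, extended by duality. It suffices, since $\mu_h^n$ contains $\operatorname{div}^2_{h,c}(\mathbf M_h u_h^n)$, which is only in $H_h^{-2}$ when $\mathbf M_hu_h^n\in L^2$. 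The other terms, $\mathbf M_h:D_h^2u_h^n$ and the cubic terms, are in $L^2$ because $\mathbf Q_h^n\in L^4$ and $D_h^2u_h^n\in L^2$ with $u_h^n\in L^\infty$; here I may need $\mathbf Q_h^n\in L^3$ or $L^4$, which is fine by the $H^1$ embedding. Duality via \eqref{eq_discrete_hessian_adjoint} gives $\|\operatorname{div}^2_{h,c}\Phi\|_{H_h^{-2}}\le\|\Phi\|_h$. Thus $\|\mNu^n\|_{H_h^{-2}}\le G^*C(C_1)=:C_{\mathcal N}$, which yields \eqref{eq:u-regularity}.

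\textbf{Main obstacle.} The delicate step is the $\mathbf Q$ estimate at the endpoint $p=2$ together with the $u$ estimate. Both are limited by the low regularity of the derivative-dependent nonlinearities $\mathbf H_{{\rm d},h}$ and $\mu_{{\rm d},h}$. The argument rests on the uniform bound $z\varphi_1(z)\le1$, which trades exactly one full power of $\Lambda_h$ for a factor $\tau^{-1}$, rather than on the $t^{-r}$ smoothing with $r<1$.
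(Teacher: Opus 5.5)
Your route is essentially the paper's. For $\mathbf Q$, the key ingredient is the spectral bound $z\varphi_1(z)=1-e^{-z}\le 1$. The paper applies $\mLQ^{p/2}$ directly and uses $\sup_{\lambda\ge0}\lambda^{p/2-1}(1-e^{-\lambda})\le1$ together with $\tau^{1-p/2}\le1$. Your endpoint estimate at $p=2$, combined with $\|\cdot\|_{H_h^p}\le\|\cdot\|_{H_h^2}$ and the trivial $L^2$ bound, gives the same uniform control for all $1\le p\le 2$. The smoothing/$\theta$ discussion can therefore be dropped. That is just as well, because the claim $\mathbf H_{\rm p}(\bQ^n,u_h^n)\in H_h^1$ does not follow from an $H_h^1$ bound on $\bQ^n$: the gradients of the quadratic and cubic terms in $\bQ^n$ would need $\bQ^n\in L^\infty$. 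For $u$ you reproduce the paper's argument exactly, namely $\mLu^{1/2}\tau\varphi_1(\tau\mLu)=(I-e^{-\tau\mLu})\mLu^{-1/2}$ plus the duality bound $\|\mLu^{-1/2}f_h\|_h\le C\|f_h\|_{H_h^{-2}}$.

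The one step that is wrong as written is the assertion that $\bM_h^n:D_h^2u_h^n$ lies in $L^2$. With only $\bQ^n\in H_h^1\hookrightarrow L^6$ and $D_h^2u_h^n\in L^2$, H\"older gives at best $L^{3/2}$ in three dimensions, and no $L^\infty$ bound on $\bQ^n$ is available at this stage. You only need an $H_h^{-2}$ bound, however. As in the paper, test against $\phi_h$ and use the discrete embedding $H_h^2\hookrightarrow L^\infty$:
\[
\bigl|\langle \bM_h^n:D_h^2u_h^n,\phi_h\rangle_h\bigr|
\le\|\bM_h^n\|_h\,\|D_h^2u_h^n\|_h\,\|\phi_h\|_\infty
\le C\bigl(1+\|\bQ^n\|_{H_h^1}\bigr)\|u_h^n\|_{H_h^2}\,\|\phi_h\|_{H_h^2}.
\]
With this replacement, your bound $\|\mNu^n\|_{H_h^{-2}}\le C_{\mathcal N}$ holds, and the estimate for $u_h^{n,*}$ follows as you describe.
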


\begin{proof}
    Lemma~\ref{gn_bound0} and the regularity estimate
\eqref{eqc7} imply that
    \ba\label{eqcf}
    \|\mNQ^n\|_h
    &\le
    C_{\mathbf N}(C_1),  \quad 0\le n\le m.
    \ed
Applying $\mLQ^{p/2}$ to both sides of \eqref{eq4_6} and taking
the discrete $L^2$ norm gives
\ba
\|\mLQ^{p/2}\bQ^{n,*}\|_h&
\le
\|\mLQ^{p/2}e^{-\tau\mLQ}\bQ^n\|_h
+
\tau\|\mLQ^{p/2}\varphi_1(\tau\mLQ)\mNQ^n\|_h\\
& \le C\|\bQ^n\|_{H_h^p}+\tau^{1-p/2}
\left\|
(\tau\mLQ)^{p/2-1}
\bigl(I-e^{-\tau\mLQ}\bigr)\mNQ^n
\right\|_h
\\
& \le C\|\bQ^n\|_{H_h^p}+C_{\mathbf N},
\ed
where the last inequality follows from the spectral bound
$\sup_{\lambda\ge0}\lambda^{p/2-1}(1-e^{-\lambda})\le 1$ with $\tau\le 1$.

We next establish a uniform bound for $\mNu^n$ in the discrete
negative norm $H_h^{-2}$. Recall that
\ba \label{eqzn}
\|v_h\|_{H_h^{-2}}
\coloneqq
\sup_{0\ne \phi_h\in E_h^{per}}
\frac{|\langle v_h,\phi_h\rangle_h|}
     {\|\phi_h\|_{H_h^2}}.
\ed
Since $\mLu$ is self-adjoint and positive definite, the preceding
coercivity estimate also yields, by duality, that
\begin{align}
\|\mLu^{-1/2}f_h\|_h
&=
\sup_{\phi_h\in E_h^{per}\setminus\{0\}}
\frac{
\left|\langle f_h,\phi_h\rangle_h\right|
}{
\|\mLu^{1/2}\phi_h\|_h
}
\le
C
\sup_{\phi_h\in E_h^{per}\setminus\{0\}}
\frac{
\left|\langle f_h,\phi_h\rangle_h\right|
}{
\|\phi_h\|_{H_h^2}
}
=
C\|f_h\|_{H_h^{-2}}.
\label{eq:Luminus_half_Hminus2}
\end{align}

We focus on the double-divergence term contained in $\mNu^n$.
Applying the discrete summation-by-parts formula twice and using
the discrete Sobolev embedding
$H_h^2\hookrightarrow L_h^\infty$, we obtain, for any
$\phi_h\in E_h^{per}$,
\begin{align}
&\left|
\left\langle
g_h^n\operatorname{div}_{h,c}^{\,2}
\bigl(\bM_h^n u_h^n\bigr),
\phi_h
\right\rangle_h
\right|+\left|
\left\langle
g_h^n\bM_h^n:D_h^2u_h^n,\phi_h
\right\rangle_h
\right|
\nonumber\\
&\qquad\le
|g_h^n|
\|\bM_h^n u_h^n\|_h
\|D_h^2\phi_h\|_h+|g_h^n|
\|\bM_h^n\|_h
\|D_h^2u_h^n\|_h
\|\phi_h\|_{\infty}\nonumber\\
&\qquad\le
C|g_h^n|
\bigl(1+\|\bQ^n\|_{H_h^1}\bigr)
\|u_h^n\|_{H_h^2}
\|\phi_h\|_{H_h^2}.
\label{eq:double-div-Hminus2}
\end{align}
The remaining lower-order terms are estimated in the same manner.
Therefore, Lemma~\ref{gn_bound0} and estimate~\eqref{eqc7} imply that
there exists a positive constant $C_{\mathcal N}$, independent of
$\tau$ and $h$, such that
\begin{equation}
\|\mNu^n\|_{H_h^{-2}}
\le
C|g_h^n|
\bigl(1+\|\bQ^n\|_{H_h^1}\bigr)
\|u_h^n\|_{H_h^2}
+C
\le C_{\mathcal N}(C_1,G^*),\quad 0\le n\le m.
\label{eq:Nu-Hminus2-bound}
\end{equation}
Applying $\mLu^{1/2}$ to both sides of \eqref{eq4_7} and invoking
\eqref{eq:Luminus_half_Hminus2} together with
\eqref{eq_operator_fractional_equivalence} for $\rho=1/2$, we obtain
\ba
\|u_h^{n,*}\|_{H_h^2}
\le
C\|\mLu^{1/2}u_h^{n,*}\|_h
&\le
C(\|u_h^n\|_{H_h^2}
+
\left\|
\tau\mLu\varphi_1(\tau\mLu)
\mLu^{-1/2}\mNu^n
\right\|_h)
\\
&=
C(\|u_h^n\|_{H_h^2}
+
\left\|
\bigl(I-e^{-\tau\mLu}\bigr)
\mLu^{-1/2}\mNu^n
\right\|_h)
\\
&\le
C(\|u_h^n\|_{H_h^2}
+
\|\mNu^n\|_{H_h^{-2}})\le C(C_1,C_{\mathcal N}),
\ed
which completes the proof of \eqref{eq:u-regularity}.
\end{proof}
\begin{lemma}\label{gn_bound1}
    Under the bootstrap assumption \eqref{eq:bootstrap_discrete}, there
    exist positive constants $\widetilde G_*$ and $\widetilde G^*$,
    independent of $\tau$ and $h$, such that the scalar factor
    $g_h^{n,*}$ defined in \eqref{eq_s_ex_n} satisfies
    \begin{equation}
        0<\widetilde G_*  \le  g_h^{n,*} \le\widetilde G^*< \infty, \quad 0\le n\le m.
    \end{equation}
\end{lemma}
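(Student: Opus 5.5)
Since $g_h^{n,*}=\zeta(s_h^{n,*})/\zeta(E_{1h}^{n,*})$ and $\zeta$ is positive and nondecreasing, the task reduces to two-sided bounds on $s_h^{n,*}$ and on $E_{1h}^{n,*}$, both independent of $\tau$ and $h$. The argument follows the proof of Lemma~\ref{gn_bound0}, with the stage values $(\bQ^{n,*},u_h^{n,*})$ in place of $(\bQ^n,u_h^n)$.

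\emph{Bounding $E_{1h}^{n,*}$.} Lemma~\ref{lem_regularit} with $p=1$, combined with \eqref{eqc7}, gives $\|\bQ^{n,*}\|_{H_h^1}\le C\sqrt{C_1}+C_{\mathbf N}(C_1)$. Estimate \eqref{eq:u-regularity} gives $\|u_h^{n,*}\|_{H_h^2}\le C(C_1,C_{\mathcal N})$. The discrete embeddings $H_h^2\hookrightarrow L_h^\infty$ and $H_h^1\hookrightarrow L_h^4$ then apply exactly as in \eqref{eq_E1_bounds}. Each term of $E_{1h}^{n,*}$ is controlled this way: the Hessian coupling term via $\|D_h^2u_h^{n,*}\|_h\le\|\Delta_hu_h^{n,*}\|_h$, the quartic $f_{\mathrm{bn}}$ term via $\|\bQ^{n,*}\|_{H_h^1}^4$, and $f_{\mathrm{bs}}$ via $\|u_h^{n,*}\|_\infty$. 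The result is $|E_{1h}^{n,*}|\le \widehat C^*$, which yields $\zeta(-\widehat C^*)\le\zeta(E_{1h}^{n,*})\le\zeta(\widehat C^*)$.

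\emph{Bounding $s_h^{n,*}$.} For the upper bound, \eqref{eq_proof1} shows $\mathcal E_h^{n,*}\le\mathcal E_h^n\le\mathcal E_h^0$. Since the quadratic part of $\mathcal E_h$ is nonnegative, this gives $s_h^{n,*}\le\mathcal E_h^0$. For the lower bound we cannot reuse the bootstrap assumption, because it concerns $s_h^n$ only. Instead we use \eqref{eq4_8} directly:
\[
s_h^{n,*}\ge s_h^n-\|\mNQ^n\|_h\|\dpred\bQ^{n,*}\|_h-\bigl|\langle \mNu^n,\dpred u_h^{n,*}\rangle_h\bigr|.
\]
By \eqref{eqcf} and the $H_h^1$ bounds on $\bQ^n,\bQ^{n,*}$, the $\mathbf Q$-term is bounded by a constant. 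For the $u$-term we apply the duality \eqref{eqzn}:
\[
\bigl|\langle \mNu^n,\dpred u_h^{n,*}\rangle_h\bigr|\le \|\mNu^n\|_{H_h^{-2}}\|\dpred u_h^{n,*}\|_{H_h^2}\le C_{\mathcal N}\bigl(\|u_h^n\|_{H_h^2}+\|u_h^{n,*}\|_{H_h^2}\bigr),
\]
using \eqref{eq:Nu-Hminus2-bound} and \eqref{eq:u-regularity}. Together with $s_h^n\ge -C_s$, this gives $s_h^{n,*}\ge -\widehat C_s$ with $\widehat C_s$ independent of $\tau,h$.

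\emph{Conclusion and main obstacle.} Set $\widetilde G_*=\zeta(-\widehat C_s)/\zeta(\widehat C^*)$ and $\widetilde G^*=\zeta(\mathcal E_h^0)/\zeta(-\widehat C^*)$. The main obstacle is the lower bound for $s_h^{n,*}$. The nonlinearity $\mNu^n$ contains the double-divergence term, which is not bounded in $\|\cdot\|_h$ uniformly in $h$. Pairing it with $\dpred u_h^{n,*}$ is therefore only possible through the $H_h^{-2}$/$H_h^2$ duality, and this in turn relies on the $H_h^2$ control of the predictor from Lemma~\ref{lem_regularit}. Note also that $\mathcal E_h^0$ is bounded independently of $h$ by \eqref{eq_initial_error_bound}.
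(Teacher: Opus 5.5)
Your proposal is correct and follows essentially the same route as the paper's proof. You bound $s_h^{n,*}\le\mathcal E_h^0$ from above via \eqref{eq_proof1}, and bound it from below through \eqref{eq4_8} using the bootstrap bound, \eqref{eqcf}, and the $H_h^{-2}$/$H_h^2$ duality \eqref{eq:Nu-Hminus2-bound} combined with Lemma~\ref{lem_regularit}. You then control $|E_{1h}^{n,*}|\le\widehat C^*$ from the stage regularity bounds, which yields the same constants $\widetilde G_*$ and $\widetilde G^*$; your remark that $\mathcal E_h^0$ is $h$-uniformly bounded makes explicit a detail the paper leaves implicit.
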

\begin{proof}
By Theorem~\ref{them3_1} and \eqref{eq_proof1}, we have
    \ba\label{eq_a7}
     s_h^{n,*} \le \mathcal{E}_h^{n,*}\le \mathcal{E}_h^n \le \mathcal{E}_h^0,\quad \forall n \ge 0.
    \ed
By combining  \eqref{eq:bootstrap_discrete},
estimates \eqref{eqc7}, \eqref{eqcf} and \eqref{eq:Nu-Hminus2-bound}, and
Lemma~\ref{lem_regularit}, we obtain
\begin{align}
s_h^{n,*}
&=s_h^n
-\left\langle
\mNQ^n,\bQ^{n,*}-\bQ^n
\right\rangle_h
-\left\langle
\mNu^n,u_h^{n,*}-u_h^n
\right\rangle_h
\nonumber\\
&\ge
-C_s
-\|\mNQ^n\|_h
 \|\bQ^{n,*}-\bQ^n\|_h
-\|\mNu^n\|_{H_h^{-2}}
 \|u_h^{n,*}-u_h^n\|_{H_h^2}
\nonumber\\
&\ge
-C_s
-C\left(
\|\bQ^{n,*}-\bQ^n\|_h
+\|u_h^{n,*}-u_h^n\|_{H_h^2}
\right)
\nonumber\\
&\ge
-C_s-C
=:-\widetilde C_s,
\qquad 0\le n\le m.
\label{eq_s_star_bound}
\end{align}

By Lemma~\ref{lem_regularit} and \eqref{eqc7}, there exists a constant $C_2>0$, independent of $\tau$ and $h$, such that
    \begin{equation}\label{eq_a8}
        \|\bQ^{n,*}\|_{H_h^1}^2 + \|u_h^{n,*}\|_{H_h^2}^2 \le C_2(\mathcal{E}_h^0,C_s), \quad 0\le n\le m,
    \end{equation}
    which implies that $|E_{1h}^{n,*}|\leq \widehat C^*$ for some positive constant $\widehat C^*$, independent of $\tau$ and $h$.
    The argument used in the proof of Lemma~\ref{gn_bound0}, together with \eqref{eq_a7} and \eqref{eq_a8}, gives a positive constant $\widetilde G^*$, independent of $\tau$ and $h$, such that
    \begin{equation}\label{eqcd}
       0<\widetilde G_*  := \frac{\zeta(-\widetilde C_s)}{\zeta(\widehat{C}^*)} \le g_h^{n,*} = \frac{\zeta(s_h^{n,*})}{\zeta(E_{1h}^{n,*})} \le \frac{\zeta(\mathcal{E}_h^0)}{\zeta(-\widehat{C}^*)} =: \widetilde G^*, \quad 0\le n\le m.
    \end{equation}
Finally, combining \eqref{eq_a8} with \eqref{eqcd}, we obtain
    \begin{equation}\label{eqcg}
        \|\mNQ^{n,*}\|_h  \le \widetilde C_{\mathbf{N}}(C_2, \widetilde G^*), \quad 0\le n\le m,
    \end{equation}
    where $\widetilde C_{\mathbf{N}}$ is a positive constant independent of $\tau$ and $h$.
\end{proof}
\subsection{Higher regularity estimates for the numerical solution}
We next establish a direct regularity bootstrap estimate for the GSAV-ETD2 scheme.
\begin{theorem} \label{th4_1}
    Under the regularity assumptions in \eqref{eqa_19} and the bootstrap assumption \eqref{eq:bootstrap_discrete},
     there exist positive constants $\mathcal{M}_Q$, $\mathcal{M}_u$, and $\mathcal{M}_1$, independent of $\tau$ and $h$, such that the following uniform bounds hold:
    \ba
         \|\Delta_h\bQ^n\|_{h} &\le \mathcal{M}_Q,\quad   \norm{(-\Delta_h)^{2-\epsilon} \bu^n}_{h} \le \mathcal{M}_u, \\   \norm{\mNu^n}_h +\norm{\mNu^{n,*}}_h&\le \mathcal{M}_1, \quad 0\le n\le m,
    \ed
     where $\epsilon \in (0, 1/2)$ is fixed and sufficiently small.
\end{theorem}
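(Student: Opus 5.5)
We prove Theorem~\ref{th4_1} by a parabolic smoothing argument on the Duhamel form of the scheme, using the bounds already established under \eqref{eq:bootstrap_discrete}: Lemma~\ref{gn_bound0}, Lemma~\ref{gn_bound1}, \eqref{eqc7}, \eqref{eq_a8}, and the negative-norm bound \eqref{eq:Nu-Hminus2-bound}. The key point is that the corrector \eqref{eq4_9}--\eqref{eq4_10} can be unrolled as
\[
V_h^{n}=e^{-n\tau\Lambda_h}V_h^0+\tau\sum_{k=0}^{n-1}e^{-(n-1-k)\tau\Lambda_h}\Bigl[\varphi_1(\tau\Lambda_h)F_h^k+\varphi_2(\tau\Lambda_h)(F_h^{k,*}-F_h^k)\Bigr],
\]
with $(V_h,\Lambda_h,F_h)$ as in the proof of Lemma~\ref{lem:etd2_reformulation}. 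We then apply the fractional operator estimates of Lemma~\ref{lem_analytic_semigroup} term by term, exploiting the fact that $\tau\sum_k (t_n-t_k)^{-r}$ is uniformly bounded for $r<1$.

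\textbf{Step 1: the $\mathbf{Q}$ estimate.} From \eqref{eqcf}, \eqref{eqcg} we know $\|\mNQ^k\|_h,\|\mNQ^{k,*}\|_h\le C$. Apply $\mLQ^{1-\epsilon/2}$ to the Duhamel sum. The $k=n-1$ term is handled by $\sup_\lambda \lambda^{r}\tau\varphi_j(\tau\lambda)\le C\tau^{1-r}$. The other terms are bounded by $C(t_n-t_{k+1})^{-(1-\epsilon/2)}$ via \eqref{eq_smoothing_estimate}. The initial term uses $\mathbf Q(0)\in\mathbf H^4$. This yields a uniform bound for $\|\bQ^n\|_{H_h^{2-\epsilon}}$.

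To reach the full $\|\Delta_h\bQ^n\|_h$, we bootstrap once more. With $u_h\in H_h^2$ we have $u_h\in L^\infty$ and $D_h^2u_h\in L_h^2$, and now $\bQ\in H_h^{2-\epsilon}\subset L^\infty$. Lemma~\ref{lem_discrete_fractional_product} then gives $\mNQ^k\in H_h^{\sigma}$ for some small $\sigma>0$, provided $D_h^2 u_h$ gains a little regularity; that gain comes from Step 2, so Steps 1 and 2 are iterated. With $\mNQ^k\in H_h^\sigma$, applying $\mLQ^{1-\sigma/2}$ to the forcing leaves an integrable singularity, and $\|\Delta_h\bQ^n\|_h\le\mathcal M_Q$ follows. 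Lemma~\ref{lem_regularit} with $p=2$ transfers the bound to $\bQ^{n,*}$.

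\textbf{Step 2: the $u$ estimate.} By \eqref{eq:Nu-Hminus2-bound} and the analogous bound at the stage values (via \eqref{eq_a8} and Lemma~\ref{gn_bound1}), $\|\mNu^k\|_{H_h^{-2}},\|\mNu^{k,*}\|_{H_h^{-2}}\le C$. By \eqref{eq:Luminus_half_Hminus2} these are bounds on $\|\mLu^{-1/2}\cdot\|_h$. Apply $\mLu^{1-\epsilon/2}$, which by \eqref{eq_operator_fractional_equivalence} controls $(-\Delta_h)^{2-\epsilon}$ up to lower order. The forcing term then becomes $\mLu^{3/2-\epsilon/2}e^{-(t_n-t_{k+1})\mLu}\mLu^{-1/2}F^k$, and its singularity $(t_n-t_{k+1})^{-(3/2-\epsilon/2)}$ is \emph{not} summable. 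So the $H_h^{-2}$ bound alone is insufficient, and we first bootstrap the regularity of $\mNu$ itself:

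\begin{enumerate}
\item From $u_h\in H_h^2$ and $\bQ\in H_h^1$, the worst term $\operatorname{div}_{h,c}^{2}(\bM_h u_h)$ lies in $H_h^{-1}$ (one derivative falls on $\bM_hu_h\in H_h^1$).
\item This gives $u_h\in H_h^{3-\epsilon}$ by the same summation argument with power $(3/4)$-type singularities.
\item Then $\bM_h u_h\in H_h^{1}$ times $u_h\in H_h^{3-\epsilon}$, together with the improved $\bQ$ from Step 1, places $\mNu\in H_h^{-\epsilon'}$, and eventually $\mNu\in L_h^2$.
\item With $\mNu\in L_h^2$, the operator $\mLu^{1-\epsilon/2}$ yields singularity exponent $1-\epsilon/2<1$, which is summable, giving $\|(-\Delta_h)^{2-\epsilon}u_h^n\|_h\le\mathcal M_u$.
\end{enumerate}

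\textbf{Step 3: the bound on $\mathcal M_1$.} With $\bQ^n\in H_h^2$ and $u_h^n\in H_h^{4-2\epsilon}$, Lemma~\ref{lem_discrete_fractional_product} bounds $\|\operatorname{div}^2_{h,c}(\bM_hu_h)\|_h\le C\|\bM_hu_h\|_{H_h^2}\le C\|\bQ\|_{H_h^2}\|u_h\|_{H_h^{2}}+\dots$. The remaining terms of $\mu_h$ are bounded similarly, and $g_h$ is bounded, giving $\|\mNu^n\|_h\le C$. For $\mNu^{n,*}$, we first transfer the regularity to $u_h^{n,*}$ by repeating the argument of Lemma~\ref{lem_regularit} at the $H_h^{4-2\epsilon}$ level, using $\|\mNu^n\|_h\le C$ and $\sup_\lambda\lambda^{r}(1-e^{-\lambda})\lambda^{-1}\le1$ for $r\le1$.

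The main obstacle is Step 2. The derivative-dependent coupling terms $\mu_{{\rm d},h}$ are initially controlled only in $H_h^{-2}$, which is too weak for direct smoothing. The proof must therefore interleave the $\mathbf Q$ and $u$ bootstraps in finitely many fractional steps, each gaining less than one unit of $\mLu$-power so that the discrete convolution sums $\tau\sum_k(t_n-t_{k+1})^{-r}$ with $r<1$ remain bounded uniformly in $\tau$ and $h$. The $\epsilon$-loss in the statement reflects exactly this requirement.
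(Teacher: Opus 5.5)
Your proposal is correct and follows essentially the same route as the paper: a discrete Duhamel representation of the corrector, analytic-semigroup smoothing with fractional powers strictly below one so that the time singularities stay integrable uniformly in $\tau$ and $h$, and an interleaved bootstrap between the regularity of $\mathbf{Q}_h$ and $u_h$, with transfers to the stage values through the predictor. The only difference is the order of the bootstrap, and both orders close:
\begin{itemize}
\item the paper uses $\mathbf{Q}_h^n\in H_h^{2-\epsilon}$ at once to get $\mathbf{M}_hu_h\in H_h^{2-\epsilon}$, hence $\mathcal{A}^{-\epsilon/2}\mathcal{N}_h$ bounded, and then reaches $u_h\in H_h^{4-2\epsilon}$ in one step via $\mathcal{A}^{-\epsilon/2}(2B_0\Delta_h^2)^{1-\epsilon/4}$ before upgrading $\mathbf{Q}_h$ to $H_h^2$;
\item you detour through $H_h^{-1}$ forcing and $u_h\in H_h^{3-\epsilon}$, upgrade $\mathbf{Q}_h$ to $H_h^2$ first, and then use $\mathcal{N}_h\in L_h^2$ for the final $u_h$ estimate.
\end{itemize}
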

\begin{proof}
Let $C_0 = \max\{\norm{ \bQ^0}_{H_h^2}, \norm{ \bu^0}_{H_h^4}\}$.
Iterating \eqref{eq4_9} over $k=0,\ldots,n-1$ gives the discrete Duhamel representation
    \begin{equation} \label{eq_discrete_sum}
        \bQ^n = e^{-t_n \mLQ} \bQ^0 + \sum_{k=0}^{n-1} e^{- (t_n - t_{k+1}) \mLQ} \int_0^\tau e^{-(\tau-\varsigma)\mLQ} ((1-\frac{\varsigma}{\tau})\mNQ^k + \frac{\varsigma}{\tau}\mNQ^{k,*}) \, d\varsigma,
    \end{equation}
    where $\mNQ^k=\mNQ(\bQ^k,u_h^k,s_h^k)$ and $\mNQ^{k,*} = \mNQ(\bQ^{k,*},u_h^{k,*},s_h^{k,*})$.
    We introduce the following piecewise linear interpolant on $[0,t_n]$:
    \begin{equation} \label{eq5a}
        \widetilde{\mNQz}_h(\sigma) := (1-\frac{\sigma-t_k}{\tau})\mNQ^k + \frac{\sigma-t_k}{\tau}\mNQ^{k,*}, \quad \text{for } \sigma \in [t_k, t_{k+1}), \quad k = 0, 1, \ldots, n-1.
    \end{equation}
Setting $\sigma=t_k+\varsigma$ in each integral and using  \eqref{eq5a}, we can rewrite \eqref{eq_discrete_sum} as
    \begin{equation} \label{eq_discrete_duhamel}
        \bQ^n = {e^{-t_n \mLQ} \bQ^0} + {\int_0^{t_n} e^{- (t_n - \sigma)\mLQ} \widetilde{\mNQz}_h(\sigma) \, d\sigma},\qquad 0\le n\le m.
    \end{equation}
    Define the self-adjoint positive-definite operator $\mathcal{A} = -K\Delta_h+ \frac{1}{2}\kappa I$.
   Applying $\mathcal{A}^{1 - \frac{\epsilon}{2}}$ to both sides of \eqref{eq_discrete_duhamel} and taking the norm $\|\cdot\|_h$, we deduce from Lemmas~\ref{lem_analytic_semigroup} and~\ref{gn_bound1} that
    \begin{equation}\label{eq_triangle_combined}
        \begin{aligned}
              \norm{\mathcal{A}^{1 - \frac{\epsilon}{2}} \bQ^n}_h
              & \le \norm{e^{-t_n\mLQ} \mathcal{A}^{1 - \frac{\epsilon}{2}} \bQ^0}_h + \int_0^{t_n} \norm{\mathcal{A}^{1-\frac{\epsilon}{2}} e^{- (t_n - \sigma)(-K\Delta_h+ \kappa )}  \widetilde{\mNQz}_h(\sigma)}_h \, d\sigma                             \\
             & \le \norm{ \mathcal{A}^{1 - \frac{\epsilon}{2}} \bQ^0}_h + \int_0^{t_n} \norm{\mathcal{A}^{1-\frac{\epsilon}{2}} e^{- (t_n - \sigma)\mathcal{A}} e^{-\frac{1}{2}\kappa (t_n - \sigma)} \widetilde{\mNQz}_h(\sigma)}_h \, d\sigma                             \\
             & \le \norm{\mathcal{A}^{1 - \frac{\epsilon}{2}} \bQ^0}_h + C_\epsilon \int_0^{t_n} (t_n - \sigma)^{-(1-\frac{\epsilon}{2})} e^{-\frac{1}{2}\kappa (t_n - \sigma)} \norm{\widetilde{\mNQz}_h(\sigma)}_h \, d\sigma \\
             & \le C_0 + C_\epsilon C_{\mathbf{N}} (\frac{1}{2}\kappa)^{-\frac{\epsilon}{2}} \int_0^{\infty} (\frac{1 }{2}\kappa \varsigma)^{-1+ \frac{\epsilon}{2}} e^{-\frac{1}{2}\kappa \varsigma} \, d(\frac{1}{2}\kappa \varsigma)                                                    \\
             & \le C_0 + C_\epsilon C_{\mathbf{N}} \, (\frac{1}{2}\kappa)^{-\frac{\epsilon}{2}} \Gamma\left(\frac{\epsilon}{2}\right)=: C_7(C_0, C_{\mathbf{N}}, \epsilon),
        \end{aligned}
    \end{equation}
    where $\Gamma(\cdot)$ denotes the gamma function and $C_7$ is a positive constant independent of $\tau$ and $h$.
Combining Lemma~\ref{lem_regularit} with
 \eqref{eq_triangle_combined}, we obtain
\begin{equation}\label{eq_Qn_star_bound}
    \norm{\mathcal{A}^{1 - \frac{\epsilon}{2}}\bQ^{n,*}}_{h} \le C\left(\norm{\mathcal{A}^{1 - \frac{\epsilon}{2}}\bQ^n}_{h} + C_{\mathbf{N}}\right) \le \widetilde C_7(C_7, C_{\mathbf{N}}, \epsilon), \quad 0\le n\le m.
\end{equation}

    We now estimate the smoothed nonlinear term $\ma^{-\frac{\ep}{2}}\mNu^n$.
  The commutativity of $\ma^{-\frac{\ep}{2}}$ and $\Delta_h$,
together with estimates \eqref{eqc7}, \eqref{eq_triangle_combined},
Lemma~\ref{lem_discrete_fractional_product}, and
\eqref{eq_discrete_hessian_adjoint}, gives
\begin{equation}
    \begin{aligned}
        \norm{\ma^{-\frac{\ep}{2}} \operatorname{div}_{h,c}^{\,2}(\bM_h \bu)}_h
         &\le  C\norm{ -\Delta_h( \ma^{-\frac{\ep}{2}} (\bM_h \bu))}_h \\
         &\le  C\norm{\ma^{1-\frac{\ep}{2}} (\bM_h \bu)}_h
         \\
         &\le C\|\bM_h\|_{H_h^{2-\epsilon}}
         \|\bu\|_{H_h^{2-\epsilon}}
         \le C(C_1,C_7).
    \end{aligned}
\end{equation}
Combining Lemmas~\ref{gn_bound0} and~\ref{gn_bound1} with
\eqref{eq_triangle_combined}, \eqref{eq_Qn_star_bound}, and the preceding bounds gives a constant $C_{\mathcal N}^*>0$,
independent of $h$ and $\tau$, such that
    \begin{equation} \label{eq_Nu_bound}
        \norm{\ma^{-\frac{\ep}{2}} \mNu^n}_{h} +\norm{\ma^{-\frac{\ep}{2}} \mNu^{n,*}}_{h}\le C_\mathcal{N}^*\left(C_1, C_7, \widetilde C_7\right).
    \end{equation}

    We next apply the same argument to the equation for $\bu^n$. Iterating \eqref{eq4_10} over $k=0,\ldots,n-1$ gives the representation
    \ba
        \bu^n = {e^{- t_n \mLu} \bu^0} + {\int_0^{t_n} e^{- (t_n - \sigma)\mLu} \widetilde{\mNu}(\sigma) \, d\sigma},\quad 0\le n\le m, \label{eq_v}
    \ed
    where
     $\widetilde{\mNu}(\sigma) = (1-\frac{\sigma-t_k}{\tau})\mNu^k + \frac{\sigma-t_k}{\tau}\mNu^{k,*}$ for $\sigma \in [t_k, t_{k+1}), k = 0, 1, \ldots, n-1$.
    Define the self-adjoint positive-semidefinite operator $\mathcal{B} =2B_0 \Delta^2_h$.
    Applying $\ma^{-\frac{\ep}{2}}\mathcal{B}^{1-\frac{\epsilon}{4}}$ to both sides of \eqref{eq_v} and taking the norm $\|\cdot\|_h$, we deduce from Lemma~\ref{lem_analytic_semigroup} and \eqref{eq_Nu_bound} that
    \ba \label{eq_combined_estimate}
            \norm{\ma^{-\frac{\ep}{2}}\mathcal{B}^{1-\frac{\epsilon}{4}} \bu^n}_{h}
             & \le \norm{e^{- t_n \mLu}\ma^{-\frac{\ep}{2}}\mathcal{B}^{1-\frac{\epsilon}{4}} \bu^0}_{h} + \int_0^{t_n} \norm{ \mathcal{B}^{1-\frac{\epsilon}{4}} e^{- (t_n - \sigma)\mathcal{B}}e^{-\kappa  (t_n-\sigma)} \ma^{-\frac{\ep}{2}} \widetilde{\mNu}(\sigma) }_{h} \, d\sigma      \\
             & \le  \norm{\ma^{-\frac{\ep}{2}}\mathcal{B}^{1-\frac{\epsilon}{4}} \bu^0}_{h} + C_\epsilon \int_0^{t_n} (t_n-\sigma)^{-\left(1 - \frac{\epsilon}{4}\right)} e^{-\kappa(t_n-\sigma)} \norm{\ma^{-\frac{\ep}{2}}\widetilde{\mNu}(\sigma)}_{h} \, d\sigma \\
             & \le C_0 +  C_\epsilon C_\mathcal{N}^* \int_0^{\infty} \varsigma^{-1 + \frac{\epsilon}{4}} e^{-\kappa \varsigma} \, d\varsigma                                                                                                 \\
             & \le C_0 +  C_\epsilon C_\mathcal{N}^*(\kappa)^{-\frac{\epsilon}{4}} \Gamma\left(\frac{\epsilon}{4}\right) \le C(C_0,C_{\mathcal{N}}^*,\epsilon).
    \ed
    Estimates \eqref{eq_combined_estimate} and \eqref{eq_Nu_bound} yield a constant $\mathcal{M}_u > 0$, independent of $\tau$ and $h$, such that
    \begin{equation}\label{eq4_13}
          \norm{(-\Delta_h)^{2-\epsilon}\bu^n}_{h}\le C\norm{\ma^{-\frac{\ep}{2}}\mathcal{B}^{1-\frac{\epsilon}{4}}\bu^n}_{h} \le \mathcal{M}_u\left(C_0, C_\mathcal{N}^*, \epsilon \right),\quad 0\le n\le m.
    \end{equation}
At the predictor stage, combining Lemma~\ref{lem_analytic_semigroup}
with \eqref{eq_Nu_bound} and \eqref{eq4_13} yields the following
uniform estimate:
    \begin{align}
      \|(-\Delta_h)^{2-\epsilon}\bu^{n,*}\|_h&\le C\|\mathcal A^{-\epsilon/2}
    \mathcal B^{1-\epsilon/4}\bu^{n,*}\|_h  \nonumber\\
    &\le
    C\|\mathcal A^{-\epsilon/2}
    \mathcal B^{1-\epsilon/4}\bu^n\|_h
    +C\|\mathcal A^{-\epsilon/2}\mNu^n\|_h
    \int_0^\tau(\tau-r)^{-1+\epsilon/4}\,dr
    \le \widetilde{\mathcal M}_u,
    \label{eq_u_star_fractional_high}
    \end{align}
    where $\widetilde{\mathcal M}_u$ is a positive constant independent of $\tau$ and $h$.
Combining \eqref{eq4_13} and
\eqref{eq_u_star_fractional_high} with
\eqref{eq_triangle_combined} and \eqref{eq_Qn_star_bound}, we conclude
that there exists a constant $C_{\mathbf N,\epsilon}>0$, independent
of $\tau$ and $h$, such that
    \begin{equation}
    \|\mathcal A^{\epsilon/2}\mNQ^n\|_h
    +\|\mathcal A^{\epsilon/2}\mNQ^{n,*}\|_h
    \le C_{\mathbf N,\epsilon},
    \qquad 0\le n\le m.
    \label{eqa4_13}
    \end{equation}

    We now return to the Duhamel formula \eqref{eq_discrete_duhamel}.
    Applying $\mathcal A$ to that formula, splitting
    $\mathcal A=\mathcal A^{1-\epsilon/2}
    \mathcal A^{\epsilon/2}$ in the source term, and using
    $\mLQ=\mathcal A+\frac\kappa2I$, we obtain
    \begin{align}
    \|\mathcal A\bQ^n\|_h
    &\le \|\mathcal A\bQ^0\|_h
    +\int_0^{t_n}
    \vertiii{\mathcal A^{1-\epsilon/2}
    e^{-(t_n-\sigma)\mathcal A}}_h
    e^{-\frac\kappa2(t_n-\sigma)}
    \|\mathcal A^{\epsilon/2}
    \widetilde{\mNQz}_h(\sigma)\|_h\,d\sigma
    \nonumber\\
    &\le C_0+C C_{\mathbf N,\epsilon}
    \int_0^\infty
    r^{-1+\epsilon/2}e^{-\frac\kappa2r}\,dr
    \le \mathcal M_Q,
    \qquad 0\le n\le m.
    \label{eqa4_14}
    \end{align}
 By the spectral equivalence between $\mathcal A$ and $I-\Delta_h$
 and Lemma~\ref{lem_regularit}, we have
    \[
    \|\bQ^n\|_{H_h^2}+\|\bQ^{n,*}\|_{H_h^2}\le C(\|\mathcal A\bQ^n\|_h+C_{\mathbf{N}})
    \le C\mathcal M_Q.
    \]
Finally, we estimate the chemical potential $\mNu^n$ and its predictor $\mNu^{n,*}$ as follows:
    \begin{align}
    \|\mNu^n\|_h+\|\mNu^{n,*}\|_h
    &\le
    G^*\left(\|\mu_{\rm p}^n\|_h
    +\|\mu_{{\rm d},h}^n\|_h\right)
    +\widetilde G^*\left(\|\mu_{\rm p}^{n,*}\|_h
    +\|\mu_{{\rm d},h}^{n,*}\|_h\right)
    \nonumber\\
    &\le \mathcal M_1,
    \qquad 0\le n\le m,
    \label{eq_Nu_uniform_L2_final}
    \end{align}
    where $\mathcal M_1$ depends on
    $C_1,\mathcal M_Q,\mathcal M_u,G^*$, and $\widetilde G^*$, but is
    independent of $h$ and $\tau$.
\end{proof}
\subsection{ Lipschitz estimates and predictor-corrector error bounds}
~
\begin{lemma}
\label{lem_parabolic_regularity_bootstrap}
Under the regularity assumption \eqref{eqa_19}, the solution of the
continuous system \eqref{eq1_9} satisfies the following higher-order
spatial regularity estimates:
\begin{align*}
\mathbf Q
&\in L^\infty
 \bigl(0,T;\mathbf H^6(\Omega,\mathcal S^{(d)})\bigr),
&
\mathbf Q_t
&\in L^\infty
 \bigl(0,T;\mathbf H^4(\Omega,\mathcal S^{(d)})\bigr),
\\
u
&\in L^\infty(0,T;H^8(\Omega)),
&
u_t
&\in L^\infty(0,T;H^6(\Omega)).
\end{align*}
Moreover, the nonlinear terms satisfy
\begin{align*}
\partial_t\mNQz
&\in L^\infty
 \bigl(0,T;\mathbf H^2(\Omega,\mathcal S^{(d)})\bigr),
&
\partial_t\mNuz
&\in L^\infty(0,T;H^2(\Omega)),
\\
\partial_{tt}\mNQz
&\in L^\infty
 \bigl(0,T;\mathbf L^2(\Omega,\mathcal S^{(d)})\bigr),
&
\partial_{tt}\mNuz
&\in L^\infty(0,T;L^2(\Omega)).
\end{align*}
\end{lemma}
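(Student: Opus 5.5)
The plan is a parabolic bootstrap: use the equations themselves to trade the time regularity in \eqref{eqa_19} for spatial regularity. By \eqref{eq1_9_continuous_1}--\eqref{eq1_9_continuous_2}, $\mathcal L\mathbf Q=\mNQz-\mathbf Q_t$ and $\mathcal D u=\mNuz-u_t$. Under periodic boundary conditions $\mathcal L$ and $\mathcal D$ are elliptic of orders two and four, with Fourier symbols $\kappa+K|\boldsymbol k|^2$ and $\kappa+2B_0|\boldsymbol k|^4$. Hence $\|\mathbf Q\|_{H^{r+2}}\le C\|\mathcal L\mathbf Q\|_{H^r}$ and $\|u\|_{H^{r+4}}\le C\|\mathcal D u\|_{H^r}$ for every $r\ge0$.

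\textbf{Step 1 (control $g$).} On $[0,T]$ the exact solution satisfies $s(t)=E_1[\mathbf Q(t),u(t)]$, so $g\equiv1$. Lemma~\ref{lem_continuous_regularity} then gives
\[
\mNQz=\mathbf H(\mathbf Q,u),\qquad \mNuz=\mu(\mathbf Q,u),
\]
and the problem reduces to bounding polynomial expressions in $\mathbf Q$, $u$ and their derivatives.

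\textbf{Step 2 (first spatial round).} Take $\mathbf Q\in L^\infty(H^4)$, $u\in L^\infty(H^6)$, $\mathbf Q_t\in L^\infty(H^2)$ and $u_t\in L^\infty(H^2)$ from \eqref{eqa_19}.
\begin{itemize}
\item Since $H^r$ is an algebra for $r>d/2$, the term $\mathbf H$, whose worst piece is $uD^2u$, lies in $H^4$.
\item The term $\mu$, whose worst piece is $\nabla\cdot\nabla\cdot(\mathbf M u)$, loses two derivatives of $\mathbf Q$ and so lies in $H^2$.
\item Elliptic regularity then gives $\mathbf Q\in L^\infty(H^4)$ (no gain yet) and $u\in L^\infty(H^6)$.
\end{itemize}

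\textbf{Step 3 (time-differentiated equations).} To gain spatial regularity I will use
\[
\partial_t\mathbf Q_t=-\mathcal L\mathbf Q_t+\partial_t\mNQz,\qquad \partial_t u_t=-\mathcal D u_t+\partial_t\mNuz.
\]
The second time derivatives $\mathbf Q_{tt},u_{tt}\in L^\infty(H^2)$ are available from $W^{2,\infty}(0,T;H^2)$. Writing
\[
\partial_t\mNQz=D_{\mathbf Q}\mathbf H\,\mathbf Q_t+D_u\mathbf H\,u_t,
\]
this lies in $H^2$ once $u_t\in H^4$, and $\partial_t\mNuz$ lies in $L^2$. The elliptic estimates then give $\mathbf Q_t\in H^4$ and $u_t\in H^4$, and iterating gives $u_t\in H^6$, $\mathbf Q_t\in H^4$, $\partial_t\mNuz\in H^2$ and $\partial_t\mNQz\in H^2$.

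\textbf{Step 4 (close the spatial bounds).} Feeding these back into $\mathcal L\mathbf Q=\mNQz-\mathbf Q_t$ gives $\mathbf Q\in H^6$, since $\mNQz\in H^4$ and $\mathbf Q_t\in H^4$. With $\mathbf Q\in H^6$, $\mu$ lies in $H^4$, and $\mathcal D u=\mu-u_t$ gives $u\in H^8$.

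\textbf{Step 5 (second time derivatives of the nonlinearities).} By the chain rule,
\[
\partial_{tt}\mNQz=D\mathbf H[\mathbf Q_{tt},u_{tt}]+D^2\mathbf H[(\mathbf Q_t,u_t),(\mathbf Q_t,u_t)].
\]
The bilinear part is in $L^2$ by the bounds above. The linear part requires $u_{tt}\in H^2$ (given) and $\mathbf Q_{tt}\in H^2$ for the term $\nabla\cdot\nabla\cdot(\mathbf M_{tt}u)$ in $\partial_{tt}\mNuz$, which is also given. This yields $L^2$ bounds for both $\partial_{tt}\mNQz$ and $\partial_{tt}\mNuz$.

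\textbf{Main obstacle.} The hardest point is the loss of derivatives from the coupling terms, namely $uD^2u$ in $\mathbf H$ and $\nabla\cdot\nabla\cdot(\mathbf M u)$ in $\mu$. These must be iterated in the right order: $u_t$ first, then $\mathbf Q_t$, then $\mathbf Q$, then $u$. The argument should check that each round gains strictly, which it does because $\mathcal D$ gains four derivatives while the coupling loses at most two. If the differentiated-equation argument only gives $L^2_t$-type control, I would instead justify the $L^\infty_t$ bounds directly from the elliptic identities at each fixed $t$, using the $L^\infty_t$ data in \eqref{eqa_19}.
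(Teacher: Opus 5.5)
Your proposal is correct and follows essentially the same route as the paper. The paper also applies the time-differentiated identities $\mathcal L\mathbf Q_t=\partial_t\mathbf N-\mathbf Q_{tt}$ and $\mathcal D u_t=\partial_t\mathcal N-u_{tt}$ pointwise in $t$, in the order $u_t\in H^4$, $\mathbf Q_t\in H^4$, $u_t\in H^6$, then $\mathbf Q\in H^6$, $u\in H^8$, and finishes with chain-rule $L^2$ bounds on $\partial_{tt}\mathbf N$ and $\partial_{tt}\mathcal N$. Your Step 2 round (which gains nothing) and the identity $g\equiv1$, which follows from the definition $s=E_1$ rather than from Lemma~\ref{lem_continuous_regularity} and which the paper uses implicitly, are harmless additions.
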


The proof of Lemma~\ref{lem_parabolic_regularity_bootstrap} is given in
Appendix~\ref{app:proof_parabolic_regularity}.
\begin{lemma} \label{thm_full_lipschitz}
 Combining the regularity assumptions in \eqref{eqa_19} with
Theorem~\ref{th4_1}, we obtain the following Lipschitz continuity
estimates for $\mNQ^n$, $\mNu^n$, $\mNQ^{n,*}$, and $\mNu^{n,*}$:
    \ba\label{eq_full_Lip_Q}
    \| \mNQ^n - \mNQz(t_n)\|_h \le C \left( |e_s^n| + \|\be_{\mathbf{Q}}^{n}\|_{H^1_h} + \|e_{u}^{n}\|_{H^2_h} + h^2 \right),\quad 0 \le n \le m,\\
    \|\mNu^n - \mNuz(t_n)\|_h \le C \left( |e_s^n| + \|\be_{\mathbf{Q}}^{n}\|_{H^2_h} + \|e_{u}^{n}\|_{H^2_h} + h^2 \right),\quad 0 \le n \le m,\\
    \| \mLu^{-\frac{1}{2}}(\mNu^n - \mNuz(t_n)) \|_h \le C \left( |e_s^n| + \|\be_{\mathbf{Q}}^{n}\|_{H^1_h} + \|e_{u}^{n}\|_{H^2_h} + h^2 \right),\quad 0 \le n \le m,\\
    \| \mNQ^{n,*} - \mNQz(t_{n+1})\|_h \le C \left( |e_s^{n,*}| + \|\be_{\mathbf{Q}}^{n,*}\|_{H^1_h} + \|e_{u}^{n,*}\|_{H^2_h} + h^2 \right),\quad 0 \le n \le m,\\
    \|\mNu^{n,*} - \mNuz(t_{n+1})\|_h \le C \left( |e_s^{n,*}| + \|\be_{\mathbf{Q}}^{n,*}\|_{H^2_h} + \|e_{u}^{n,*}\|_{H^2_h} + h^2 \right),\quad 0 \le n \le m,
    \ed
    where $C>0$ is independent of $\tau$ and $h$.
\end{lemma}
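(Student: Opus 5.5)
The estimates follow from one template: separate the scalar factor from the chemical potentials, then split each potential into derivative-free and derivative-dependent parts. For $n\le m$ I write
\[
\mNQ^n-\mNQz(t_n)=(g_h^n-g(t_n))\,\muQ^n+g(t_n)\bigl(\muQ^n-\mathbf H(t_n)\bigr),
\]
and use the analogous decomposition for $\mNu^n$, $\mNQ^{n,*}$ and $\mNu^{n,*}$. In the starred case the exact quantities are evaluated at $t_{n+1}$. On each term I will use the following bounds:
\begin{itemize}
\item the uniform bounds on the numerical solution from \eqref{eqc7}, Lemma~\ref{lem_regularit}, \eqref{eq_a8} and Theorem~\ref{th4_1}, namely $H_h^2$ for $\bQ$, $H_h^{4-2\epsilon}$ for $u_h$, and $\|\muQ\|_h,\|\mu_h\|_h$ bounded;
\item the regularity \eqref{eqa_19} and Lemma~\ref{lem_parabolic_regularity_bootstrap} for the exact solution;
\item the discrete Sobolev embedding $H_h^2\hookrightarrow L^\infty$ from Lemma~\ref{lem_discrete_fractional_product}.
\end{itemize}
Every difference between a discrete operator applied to $\mathcal I_h$ of the exact solution and the continuous operator is a consistency error of order $h^2$, because all stencils are second-order central or one-sided-composite differences and the solution is smooth.

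\emph{The scalar factor.} Since $\zeta\in C^1$ and its arguments $s_h^n$, $E_{1h}^n$, $s(t_n)$, $E_1(t_n)$ lie in a compact set by Lemmas~\ref{gn_bound0} and \ref{gn_bound1}, and $\zeta>0$ there, the map $(s,E)\mapsto\zeta(s)/\zeta(E)$ is Lipschitz on that set. This gives
\[
|g_h^n-g(t_n)|\le C\bigl(|e_s^n|+|E_{1h}^n-E_1(t_n)|\bigr).
\]
I split $E_{1h}^n-E_1(t_n)$ into $E_{1h}[\bQ^n,u_h^n]-E_{1h}[\mathbf Q(t_n),u(t_n)]$, which is $O(h^2)$ by consistency, plus a remainder. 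In the remainder, the polynomial bulk terms are handled by $ab-cd$ expansions and $L^\infty/H^1$ bounds. For the coupling term $\langle D_h^2u_h,\bM_h u_h\rangle_h$ the difference is bounded by $C(\|e_u^n\|_{H_h^2}+\|\be_{\mathbf Q}^n\|_h)$ using \eqref{eq_discrete_hessian_bound}. Together with boundedness of $\muQ^n$, this term contributes $C(|e_s^n|+\|\be_{\mathbf Q}^n\|_{H^1_h}+\|e_u^n\|_{H^2_h}+h^2)$.

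\emph{The $\mathbf Q$-potential.} In $\muQ^n-\mathbf H(t_n)$ the derivative-free part $\mathbf H_{\rm p}$ is a polynomial in $(\bQ,u_h)$, so I bound it by $C(\|\be_{\mathbf Q}^n\|_h+\|e_u^n\|_h)$ using $L^\infty$ bounds on both solutions; in 3D I use $H_h^2\hookrightarrow L^\infty$ for both $\bQ^n$ and $\mathbf Q(t_n)$. For the derivative part I write
\[
u_hD_h^2u_h-uD^2u=e_uD_h^2u_h+u\,D_h^2e_u+O(h^2),
\]
which gives $\le C\|e_u\|_{H_h^2}$ because $\|u_h\|_\infty$ and $\|D_h^2u_h\|_h$ are bounded and $u\in W^{2,\infty}$. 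This yields the first estimate, and the fourth follows identically with starred quantities.

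\emph{The $u$-potential.} In $\mu_h$, the terms $\mathbf M_h:D_h^2u_h$ and $\operatorname{div}^2_{h,c}(\mathbf M_hu_h)$ involve two derivatives of $\mathbf Q$. Expanding the latter produces $D_h^2\mathbf Q\,u$ and $D\mathbf Q\,Du$ terms, so the error needs $\|\be_{\mathbf Q}\|_{H_h^2}$. I control the cross terms $D\be_{\mathbf Q}\,Du_h$ with $\|Du_h\|_\infty\le C\|u_h\|_{H_h^{4-2\epsilon}}$ (Theorem~\ref{th4_1}) and $\be_{\mathbf Q}D_h^2u_h$ with $\|D_h^2 u_h\|_{L^4}$. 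This gives the second and fifth estimates. For the weak third estimate I test against $\phi_h$ and use \eqref{eq:Luminus_half_Hminus2}, so it suffices to bound the $H_h^{-2}$ norm. Summing by parts twice as in \eqref{eq:double-div-Hminus2} moves both derivatives onto $\phi_h$:
\[
|\langle \operatorname{div}^2_{h,c}(\bM_h^nu_h^n-\bM(t_n)u(t_n)),\phi_h\rangle_h|\le \|\bM_h^nu_h^n-\bM u\|_h\|\phi_h\|_{H_h^2}+Ch^2\|\phi_h\|_{H_h^2}.
\]
The term $\mathbf M:D^2u$ is handled as $\|e_{\mathbf Q}\|_h\|D^2u\|_\infty\|\phi_h\|_\infty+\|D_h^2e_u\|_h\|\mathbf M_h\phi_h\|_h$, so only $\|\be_{\mathbf Q}\|_{H_h^1}$ appears.

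\textbf{Main obstacle.} The hardest step is making the $h^2$ consistency of these double-divergence and Hessian products rigorous. The remainder must be bounded in $H_h^{-2}$ without losing powers of $h$, which needs $\mathbf Q\in H^6$ and $u\in H^8$ from Lemma~\ref{lem_parabolic_regularity_bootstrap}. It also requires checking that the $L^\infty$ and $L^4$ bounds on numerical derivatives are uniform in $h$, which relies on $\epsilon<1/2$.
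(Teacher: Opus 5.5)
Your proposal is correct and is the standard local-Lipschitz/discrete-Sobolev argument that the paper invokes but omits: factor out $g_h-1$ (Lipschitz on the compact range given by Lemmas~\ref{gn_bound0} and~\ref{gn_bound1}), use the uniform bounds of Theorem~\ref{th4_1} on the potentials, and get the third estimate by $H_h^{-2}$ duality via \eqref{eq:Luminus_half_Hminus2}, summing by parts so that only $\|\be_{\mathbf Q}^n\|_h$ enters. One small slip: the $O(h^2)$ consistency piece of $E_{1h}^n-E_1(t_n)$ is $E_{1h}[\mathbf Q(t_n),u(t_n)]-E_1[\mathbf Q(t_n),u(t_n)]$, while $E_{1h}[\bQ^n,u_h^n]-E_{1h}[\mathbf Q(t_n),u(t_n)]$ is the error-dependent part, which you then correctly bound by $C(\|e_u^n\|_{H_h^2}+\|\be_{\mathbf Q}^n\|_{H_h^1})$.
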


The proof of Lemma~\ref{thm_full_lipschitz} follows from standard local
Lipschitz and discrete Sobolev estimates and is therefore omitted.

\begin{lemma} \label{lem4_2}
Under the regularity assumptions in \eqref{eqa_19} and Lemma \ref{lem_parabolic_regularity_bootstrap},  we have the
following error estimate for the prediction step \eqref{eq4_6}--\eqref{eq4_8}:
    \begin{align}
         &\| \be_{\mathbf{Q}}^{n,*} \|_{H^2_h}   \le C \left( \|\be_{\mathbf{Q}}^{n}\|_{H^2_h}  + \|e_{u}^{n}\|_{H^2_h} +|e_s^n|+ \tau^2 + h^2 \right),\label{eq_error_q_star_bound}\\
        &\| \be_{\mathbf{Q}}^{n,*} \|_{H^1_h}  + \| e_u^{n,*} \|_{H^2_h}  + |e_s^{n,*}|    \le C \left( \|\be_{\mathbf{Q}}^{n}\|_{H^1_h} + \|e_{u}^{n}\|_{H^2_h} +|e_s^n|+ \tau^2 + h^2  \right).\label{eq_error_s_star_bound}
    \end{align}
\end{lemma}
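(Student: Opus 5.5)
The plan is to compare the exponential-Euler predictor \eqref{eq4_6}--\eqref{eq4_8} with the exact variation-of-constants formula over $[t_n,t_{n+1}]$. We then measure each error component in the norm in which the smoothing factor $I-e^{-\tau\Lambda_h}$ is bounded by one, which avoids any $\tau$--$h$ coupling.

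\textbf{Step 1: exact solution in discrete Duhamel form.} Restricting \eqref{eq1_9_continuous_1} to the grid gives
\[
\partial_t\mathbf Q=-\mLQ\mathbf Q+\mNQz+\mathbf R_Q,
\]
where $\mathbf R_Q$ is the spatial truncation residual. By \eqref{eqa_19} and Lemma~\ref{lem_parabolic_regularity_bootstrap}, $\|\mathbf R_Q\|_h+\|\partial_t\mathbf R_Q\|_h\le Ch^2$. Variation of constants then gives
\[
\mathbf Q(t_{n+1})=e^{-\tau\mLQ}\mathbf Q(t_n)+\int_0^\tau e^{-(\tau-\sigma)\mLQ}\bigl(\mNQz(t_n+\sigma)+\mathbf R_Q(t_n+\sigma)\bigr)\,d\sigma .
\]
Subtracting this from \eqref{eq4_6} yields
\[
\be_{\mathbf Q}^{n,*}=e^{-\tau\mLQ}\be_{\mathbf Q}^n+\tau\varphi_1(\tau\mLQ)\bigl(\mNQ^n-\mNQz(t_n)\bigr)-\int_0^\tau e^{-(\tau-\sigma)\mLQ}\bigl(\mNQz(t_n+\sigma)-\mNQz(t_n)\bigr)\,d\sigma-\tau\varphi_1(\tau\mLQ)\mathbf R_Q(t_n)+\mathcal O(\tau^2h^2).
\]
The same decomposition holds for $e_u^{n,*}$ with $\mLu,\mNu,\mNuz$ in place of $\mLQ,\mNQ,\mNQz$.

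\textbf{Step 2: $\mathbf Q$-estimates.} To obtain \eqref{eq_error_q_star_bound}, apply $\mLQ$ to the error representation and estimate each term.
\begin{itemize}
\item The semigroup term satisfies $\|\mLQ e^{-\tau\mLQ}\be_{\mathbf Q}^n\|_h\le\|\mLQ\be_{\mathbf Q}^n\|_h$.
\item For the nonlinear and truncation terms, $\tau\mLQ\varphi_1(\tau\mLQ)=I-e^{-\tau\mLQ}$ has operator norm at most $1$. Hence these terms are bounded by $\|\mNQ^n-\mNQz(t_n)\|_h+Ch^2$, and the first estimate of Lemma~\ref{thm_full_lipschitz} applies.
\item For the integral term, $\mLQ$ commutes with the semigroup and $\partial_t\mNQz\in L^\infty(H^2)$ by Lemma~\ref{lem_parabolic_regularity_bootstrap}. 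Consistency of the grid restriction then gives a bound $\int_0^\tau\|\mNQz(t_n+\sigma)-\mNQz(t_n)\|_{H^2}d\sigma\le C\tau^2$.
\end{itemize}
Using the norm equivalence \eqref{eq_operator_fractional_equivalence}, this gives \eqref{eq_error_q_star_bound}. The $H_h^1$ bound for $\be_{\mathbf Q}^{n,*}$ follows identically with $\mLQ^{1/2}$ in place of $\mLQ$, since $\|(\tau\mLQ)^{-1/2}(I-e^{-\tau\mLQ})\|\le1$ and $\tau^{1/2}\le1$.

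\textbf{Step 3: $u$-estimate (the main obstacle).} The difficulty is that $\mNu^n-\mNuz(t_n)$ is only controlled in $L^2_h$ in terms of $\|\be_{\mathbf Q}^n\|_{H_h^2}$, whereas \eqref{eq_error_s_star_bound} allows only $\|\be_{\mathbf Q}^n\|_{H_h^1}$. I would therefore apply $\mLu^{1/2}$ rather than $\mLu$ and write
\[
\mLu^{1/2}\tau\varphi_1(\tau\mLu)\bigl(\mNu^n-\mNuz(t_n)\bigr)=(I-e^{-\tau\mLu})\,\mLu^{-1/2}\bigl(\mNu^n-\mNuz(t_n)\bigr).
\]
This term is then bounded by the third (negative-norm) estimate of Lemma~\ref{thm_full_lipschitz}, which requires only $\|\be_{\mathbf Q}^n\|_{H_h^1}$. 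The truncation and the $\mathcal O(\tau^2)$ integral terms are handled exactly as in Step 2, using $\partial_t\mNuz\in L^\infty(H^2)$. Combining with \eqref{eq_operator_fractional_equivalence} at $\rho=1/2$ yields the bound for $\|e_u^{n,*}\|_{H_h^2}$.

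\textbf{Step 4: scalar variable.} The exact auxiliary variable satisfies
\[
s(t_{n+1})=s(t_n)-\int_{t_n}^{t_{n+1}}\bigl(\langle\mNQz,\mathbf Q_t\rangle+\langle\mNuz,u_t\rangle\bigr)\,dt .
\]
Subtracting this from \eqref{eq4_8}, we split $e_s^{n,*}-e_s^n$ into three groups.
\begin{itemize}
\item $\langle\mNQ^n-\mNQz(t_n),\dpred\bQ^{n,*}\rangle_h$ is bounded by the Lipschitz bound times $\|\dpred\bQ^{n,*}\|_h\le C$, using Lemma~\ref{lem_regularit}.
\item For the $u$-part, $\langle\mNu^n-\mNuz(t_n),\dpred u_h^{n,*}\rangle_h$ is bounded by $\|\mLu^{-1/2}(\mNu^n-\mNuz(t_n))\|_h\,\|\mLu^{1/2}\dpred u_h^{n,*}\|_h$, and the second factor is bounded by Lemma~\ref{lem_regularit}.
\item $\langle\mNQz(t_n),\be_{\mathbf Q}^{n,*}-\be_{\mathbf Q}^n\rangle_h$ and its $u$-analogue are controlled by Steps 2--3 and the regularity of $\mNQz$ and $\mNuz$. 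The quadrature remainders $\int_{t_n}^{t_{n+1}}\langle\mNQz(t)-\mNQz(t_n),\mathbf Q_t\rangle\,dt=\mathcal O(\tau^2)$, together with spatial consistency errors of size $\mathcal O(h^2)$, complete the bound.
\end{itemize}
Summing the estimates from Steps 2--4 gives \eqref{eq_error_s_star_bound}.
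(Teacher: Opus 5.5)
Your proof is correct. For $\|\mathbf e_{\mathbf Q}^{n,*}\|_{H_h^2}$ and $\|e_u^{n,*}\|_{H_h^2}$ it matches the paper: Duhamel form of the predictor error, then $\mathcal L_h$ with $\|I-e^{-\tau\mathcal L_h}\|\le 1$, resp.\ $\mathcal D_h^{1/2}$ with the negative-norm Lipschitz bound. It departs from the paper in two places. For $\|\mathbf e_{\mathbf Q}^{n,*}\|_{H_h^1}$, the paper rewrites the predictor error equation in the coercive form $\psi(\tau\mathcal L_h)\delta_\tau^{\mathrm p}\mathbf e_{\mathbf Q}^{n,*}+\mathcal L_h\mathbf e_{\mathbf Q}^{n,*}=\mathbf N_h^n-\mathbf N(t_n)-\widetilde{\mathbf T}_{\mathbf Q}^{n,*}$ and tests with $\tau\mathcal L_h\mathbf e_{\mathbf Q}^{n,*}$, which requires a Fubini-type bound on $\|\mathcal L_h^{1/2}\widetilde{\mathbf T}_{\mathbf Q}^{n,*}\|_h$. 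Your spectral bound $\|\mathcal L_h^{1/2}\tau\varphi_1(\tau\mathcal L_h)\|\le\tau^{1/2}$ reaches the same conclusion with less machinery; the paper's version mainly keeps this step inside the coercive-reformulation framework that drives the proof of Theorem~\ref{them4_1}. For $e_s^{n,*}$, the paper multiplies the scalar error equation by $2\tau e_s^{n,*}$. It pairs $\mathbf N_h^n,\mathcal N_h^n$ with the error increments and the nonlinear errors with the exact increments $\mathbf Q(t_{n+1})-\mathbf Q(t_n)=O(\tau)$, so it needs $\|\mathcal N_h^n\|_h\le\mathcal M_1$ from Theorem~\ref{th4_1} and an absorption step. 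You use the opposite pairing and a plain triangle inequality. You bound the numerical increments only by $O(1)$, via Lemma~\ref{lem_regularit} and $H_h^{-2}$--$H_h^{2}$ duality, and pair the smooth $\mathbf N(t_n),\mathcal N(t_n)$ with the error increments. The $O(1)$ coefficient is harmless because the lemma is a one-step bound with a generic constant, and you avoid the uniform $L_h^2$ bound on $\mathcal N_h^n$ at this stage. One detail to tighten: your remainder ``$\mathcal O(\tau^2h^2)$'' is an $L_h^2$ statement, yet in Step~2 you apply $\mathcal L_h$ to it. By Fubini, $\mathcal L_h\int_0^\tau e^{-(\tau-\sigma)\mathcal L_h}\bigl(\mathbf R_Q(t_n+\sigma)-\mathbf R_Q(t_n)\bigr)\,d\sigma=\int_0^\tau\bigl(I-e^{-(\tau-r)\mathcal L_h}\bigr)\partial_t\mathbf R_Q(t_n+r)\,dr=O(\tau h^2)$, which suffices; alternatively, use $\mathbf Q\in L^\infty(0,T;\mathbf H^6)$ as the paper does.
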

\begin{proof}
Subtracting the corresponding exact ETD relations over
$[t_n,t_{n+1}]$ from
\eqref{eq4_6}--\eqref{eq4_7} yields the following predictor-stage error
equations:
\begin{align}
\be_{\mathbf{Q}}^{n,*}&= e^{-\tau \mLQ} \be_{\mathbf{Q}}^{n} + \tau \varphi_1(\tau \mLQ) (\mNQ^n - \mNQz(t_n)) -  \int_0^\tau e^{-(\tau - \varsigma) \mLQ}  {\mathbf{T}}_{\mathbf{Q}}^{n,*}(\varsigma) d\varsigma,\label{eq_error_Q_star}\\
e_u^{n,*}&= e^{-\tau \mLu} e_u^{n} + \tau \varphi_1(\tau \mLu) (\mNu^n - \mNuz(t_n)) -   \int_0^\tau e^{-(\tau - \varsigma) \mLu}  {T}_u^{n,*}(\varsigma) d\varsigma,\label{eq_error_u_star}
\end{align}
where ${\mathbf{T}}_{\mathbf{Q}}^{n,*}(\varsigma)$ and ${T}_u^{n,*}(\varsigma)$ are defined by
\begin{align*}
    \mathbf T_{\mathbf Q}^{n,*}(\varsigma)
&:=
\mNQz(t_n+\varsigma)-\mNQz(t_n)
+K\left(
\Delta\mathbf Q(t_n+\varsigma)
-\Delta_h\mathbf Q(t_n+\varsigma)
\right),\\
T_u^{n,*}(\varsigma)
&:=
\mNuz(t_n+\varsigma)-\mNuz(t_n)
+2B_0\left(
\Delta_h^2u(t_n+\varsigma)
-\Delta^2u(t_n+\varsigma)
\right),
\end{align*}
Using the regularity assumptions in \eqref{eqa_19} and Lemma \ref{lem_parabolic_regularity_bootstrap}, we have
\begin{align*}
\|\mLQ\mathbf T_{\mathbf Q}^{n,*}(\varsigma)\|_h
&\le
\int_0^\varsigma
\|\mLQ\partial_t\mathbf N(t_n+r)\|_h\,dr
+\|\mLQ(\mLQ-\mathcal L)
\mathbf Q(t_n+\varsigma)\|_h\\
&\le
C\tau\|\partial_t\mathbf N\|_{L^\infty(0,T;\mathbf H^2)}
+Ch^2\|\mathbf Q\|_{L^\infty(0,T;\mathbf H^6)}
\le C(\tau+h^2),\\
\|T_u^{n,*}(\varsigma)\|_h
&\le
C\tau\|\partial_t\mathcal N\|_{L^\infty(0,T;L^2)}
+Ch^2\|u\|_{L^\infty(0,T;H^6)},\\
\|\mLu^{1/2}T_u^{n,*}(\varsigma)\|_h
&\le
C\tau\|\partial_t\mathcal N\|_{L^\infty(0,T;H^2)}
+Ch^2\|u\|_{L^\infty(0,T;H^8)}.
\end{align*}

Applying the operator $\frac{1}{\tau}(\mq(\tau \mLQ)+\tau \mLQ)$ to \eqref{eq_error_Q_star} gives
\[
\psi(\tau\mLQ)\eaa+\mLQ\be_{\mathbf{Q}}^{n,*}
=\mNQ^n-\mNQz(t_n)-\widetilde{\mathbf{T}}_{\mathbf{Q}}^{n,*},
\]
 where  $\widetilde{ \mathbf{T}}_{\mathbf{Q}}^{n,*}= \int_0^\tau \mathcal K_{\mLQ}(\varsigma)  \mathbf{T}_{\mathbf{Q}}^{n,*}(\varsigma) d\varsigma$ with
\[
\mathcal K_{\Lambda_h}(\varsigma)
:=
\frac{1}{\tau}
\bigl(\mq(\tau\Lambda_h)+\tau\Lambda_h\bigr)
e^{-(\tau-\varsigma)\Lambda_h},\quad \Lambda_h\in\{\mLQ,\mLu\}.
\]
Since $\Lambda_h$ is self-adjoint and positive definite, we can calculate
\[
\int_0^\tau\mathcal K_{\Lambda_h}(\varsigma)\,d\varsigma=I,
\qquad
\vertiii{\int_r^\tau\mathcal K_{\Lambda_h}(\varsigma)\,d\varsigma}_h
\le1,\quad 0  \le r \le \tau.
\]
Using
$\mathbf{T}_{\mathbf{Q}}^{n,*}(\varsigma)=\mathbf{T}_{\mathbf{Q}}^{n,*}(0)+\int_0^\varsigma\partial_r\mathbf{T}_{\mathbf{Q}}^{n,*}(r)\,dr$, along with \eqref{eqa_19} and Lemma \ref{lem_parabolic_regularity_bootstrap}, we obtain
\ba
\norm{\mLQ^{\frac{1}{2}}\widetilde{ \mathbf{T}}_{\mathbf{Q}}^{n,*}}_h
={}&
\norm{\left(
\int_0^\tau
\mathcal K_{\mLQ}(\varsigma)\,d\varsigma
\right)\mLQ^{\frac{1}{2}}\mathbf{T}_{\mathbf{Q}}^{n,*}(0)
+
\int_0^\tau
\mathcal K_{\mLQ}(\varsigma)
\left(
\int_0^\varsigma
\partial_r(\mLQ^{\frac{1}{2}}\mathbf{T}_{\mathbf{Q}}^{n,*}(r))\,dr
\right)d\varsigma}_h
\nonumber\\
\le{}&
\norm{\mLQ^{\frac{1}{2}}\mathbf{T}_{\mathbf{Q}}^{n,*}(0)}_h
+
\norm{\int_0^\tau
\left(
\int_r^\tau
\mathcal K_{\mLQ}(\varsigma)\,d\varsigma
\right)
\partial_r(\mLQ^{\frac{1}{2}}\mathbf{T}_{\mathbf{Q}}^{n,*}(r))\,dr}_h\nonumber\\
\le{}&
\|\mLQ^{1/2}\mathbf T_{\mathbf Q}^{n,*}(0)\|_h
+\int_0^\tau
\|\mLQ^{1/2}\partial_r\mathbf T_{\mathbf Q}^{n,*}(r)\|_h\,dr
\nonumber\\
\le{}& C\|\mathbf Q(t_n)\|_{ H^5}h^2+  C\left(
\|\partial_{\varsigma}\mathbf N(t_n+\varsigma)\|_{ H^1}
+\|\mathbf Q_{\varsigma}(t_n+\varsigma)\|_{ H^4}
\right)\tau\le C(h^2+\tau).
\ed

Thus the preceding estimates can be summarized as
\ba \label{eqcv}
\|\mLQ^{\frac{1}{2}}\widetilde{\mathbf{T}}_{\mathbf{Q}}^{n,*}\|_{h}+\|\mLQ{\mathbf{T}}_{\mathbf{Q}}^{n,*}(\varsigma)\|_{h}
+\|T_u^{n,*}(\varsigma)\|_{h}
+\|\mLu^{\frac{1}{2}}T_u^{n,*}(\varsigma)\|_{h}
\le C(\tau+h^2), \quad  0\le\varsigma\le\tau,
\ed
where $C$ is independent of $h$ and $\tau$.

Taking the discrete inner product of the first prediction-step error equation with $\tau\mLQ \be_{\mathbf{Q}}^{n,*}$ gives
    \ba \label{eqa_18}
    &\left\langle  \psi(\tau \mLQ) \eaa, \tau\mLQ \be_{\mathbf{Q}}^{n,*} \right\rangle_h + \tau\left\langle \mLQ \be_{\mathbf{Q}}^{n,*}, \mLQ \be_{\mathbf{Q}}^{n,*} \right\rangle_h
    = \left\langle   \mNQ^n - \mNQz(t_n) - \widetilde{ \mathbf{T}}_{\mathbf{Q}}^{n,*}, \tau\mLQ \be_{\mathbf{Q}}^{n,*} \right\rangle_h.
    \ed
    Applying the algebraic identity and \eqref{eq_inner_Q} to the left-hand side of \eqref{eqa_18} yields
    \begin{equation} \label{eq_LHS_expanded_norm_star}
        \begin{aligned}
            \text{LHS} & = \frac{1}{2} \left( \icQ{\be_{\mathbf{Q}}^{n,*}} - \icQ{\be_{\mathbf{Q}}^{n}}+\icQ{\be_{\mathbf{Q}}^{n,*}-\be_{\mathbf{Q}}^{n}}+ \frac{1}{2}\tau\norm{\mLQ\be_{\mathbf{Q}}^{n,*}}_h^2 \right)
             + \frac{3}{4}\tau\norm{\mLQ\be_{\mathbf{Q}}^{n,*}}_h^2 \\
             & = \frac{1}{2} \left( \idQ{\be_{\mathbf{Q}}^{n,*}} - \icQ{\be_{\mathbf{Q}}^{n}}+\icQ{\be_{\mathbf{Q}}^{n,*}-\be_{\mathbf{Q}}^{n}} \right)
             + \frac{3}{4}\tau\norm{\mLQ\be_{\mathbf{Q}}^{n,*}}_h^2.
        \end{aligned}
    \end{equation}
 Applying Young's inequality and using
Lemma~\ref{thm_full_lipschitz} together with
estimate~\eqref{eqcv}, we obtain the following bound for the
right-hand side of \eqref{eqa_18}:
    \begin{equation} \label{eq_RHS_final_bound_star}
        \begin{aligned}
            \text{RHS} & \leq C\tau \ia{\mNQ^n - \mNQz(t_n)}  + \frac{1}{4} \tau\|\mLQ \be_{\mathbf{Q}}^{n,*}\|_h^2  -\left\langle    \tau\mLQ^{\frac{1}{2}}\widetilde{ \mathbf{T}}_{\mathbf{Q}}^{n,*}, \mLQ^{\frac{1}{2}} \be_{\mathbf{Q}}^{n,*} \right\rangle_h \\
                & \le C\tau \left( |e_s^n|^2 + \|\be_{\mathbf{Q}}^{n}\|_{H^1_h}^2 + \|e_{u}^{n}\|_{H^2_h}^2 + h^4 \right) + \frac{1}{4}\tau \|\mLQ \be_{\mathbf{Q}}^{n,*}\|_h^2  \\
                & \quad + C\tau^2 (\tau + h^2)^2 + \frac{1}{4}  \|\be_{\mathbf{Q}}^{n,*}\|_{\mLQ }^2.
        \end{aligned}
    \end{equation}
    Combining \eqref{eq_LHS_expanded_norm_star} and
    \eqref{eq_RHS_final_bound_star}, and invoking the norm comparisons
    \eqref{eq_norm_bounds_1} with $\tau \le1$, we obtain
    \begin{equation}
        \begin{aligned}
             \frac{1}{4}\idQ{\be_{\mathbf{Q}}^{n,*}}&
              \le\frac{1}{2}\icQ{\be_{\mathbf{Q}}^{n}}  + C\tau \left( |e_s^n|^2 + \|\be_{\mathbf{Q}}^{n}\|_{H^1_h}^2 + \|e_{u}^{n}\|_{H^2_h}^2 + h^4 \right) + C\tau^2 (\tau + h^2)^2 \\
              &\le C \left( |e_s^n|^2 + \|\be_{\mathbf{Q}}^{n}\|_{H^1_h}^2 + \|e_{u}^{n}\|_{H^2_h}^2  + \tau^4+ h^4 \right).
        \end{aligned}
    \end{equation}
Using \eqref{eq_psi_functions}, we obtain
    \begin{equation}\label{eq8a}
        \begin{aligned}
            \|\be_{\mathbf{Q}}^{n,*}\|_{H^1_h}^2 \le C \idQ{\be_{\mathbf{Q}}^{n,*}}\le C \left( |e_s^n|^2 + \|\be_{\mathbf{Q}}^{n}\|_{H^1_h}^2 + \|e_{u}^{n}\|_{H^2_h}^2  + \tau^4+ h^4 \right),
        \end{aligned}
    \end{equation}

Applying  $\mLQ$ to \eqref{eq_error_Q_star} yields
\begin{equation}
\mLQ \be_{\mathbf{Q}}^{n,*} = e^{-\tau \mLQ} \mLQ \be_{\mathbf{Q}}^{n} + \left( I - e^{-\tau \mLQ} \right) \left( \mNQ^n - \mNQz(t_n) \right) - \int_0^\tau e^{-(\tau - \varsigma) \mLQ} \left[ \mLQ {\mathbf{T}}_{\mathbf{Q}}^{n,*}(\varsigma) \right] d\varsigma.
\end{equation}
Taking the $\|\cdot\|_h$ norm on both sides and applying Lemma~\ref{thm_full_lipschitz} and \eqref{eqcv} gives
\ba
\| \mLQ \be_{\mathbf{Q}}^{n,*} \|_h \le& \| \mLQ \be_{\mathbf{Q}}^{n} \|_h + \| \mNQ^n - \mNQz(t_n) \|_h + \int_0^\tau \left\| \mLQ {\mathbf{T}}_{\mathbf{Q}}^{n,*}(\varsigma) \right\|_h d\varsigma\\
\le &\| \mLQ \be_{\mathbf{Q}}^{n} \|_h + C \left( |e_s^n| + \|\be_{\mathbf{Q}}^{n}\|_{H^1_h} + \|e_{u}^{n}\|_{H^2_h} + h^2 \right) + C\tau (\tau + h^2)\\
\le & C \left( \|\be_{\mathbf{Q}}^{n}\|_{H^2_h} + \|e_{u}^{n}\|_{H^2_h} +|e_s^n|+ \tau^2 + h^2\right),
\ed
which completes the estimate \eqref{eq_error_q_star_bound}.

Applying  $\mLu^{\frac{1}{2}}$ to \eqref{eq_error_u_star} yields
\begin{equation}\label{eq7a}
\mLu^{\frac{1}{2}} e_u^{n,*} = e^{-\tau \mLu} \mLu^{\frac{1}{2}} e_u^{n} + \left( I - e^{-\tau \mLu} \right)\mLu^{-\frac{1}{2}} \left( \mNu^n - \mNuz(t_n) \right) - \int_0^\tau e^{-(\tau - \varsigma) \mLu} \left[ \mLu^{\frac{1}{2}} T_u^{n,*}(\varsigma) \right] d\varsigma.
\end{equation}
Taking the $\|\cdot\|_h$ norm on both sides of \eqref{eq7a} and applying Lemma~\ref{thm_full_lipschitz} and \eqref{eqcv} gives
\ba
\| \mLu^{\frac{1}{2}} e_u^{n,*} \|_h \le& \| \mLu^{\frac{1}{2}} e_u^{n} \|_h + \| \mLu^{-\frac{1}{2}}(\mNu^n - \mNuz(t_n)) \|_h + \int_0^\tau \left\| \mLu^{\frac{1}{2}} T_u^{n,*}(\varsigma) \right\|_h d\varsigma\\
\le &\| \mLu^{\frac{1}{2}} e_u^{n} \|_h + C \left( |e_s^n| + \|\be_{\mathbf{Q}}^{n}\|_{H^1_h} + \|e_{u}^{n}\|_{H^2_h} + h^2 \right) + C\tau (\tau + h^2)\\
\le & C \left(  \|\be_{\mathbf{Q}}^{n}\|_{H^1_h}+\|e_{u}^{n}\|_{H^2_h} + |e_s^n| + \tau^2 + h^2 \right).
\ed
By the spectral equivalence between $\mLu^{1/2}$ and the discrete
$H_h^2$ norm, we have
\[
\| e_u^{n,*} \|_{H^2_h}\le C \| \mLu^{\frac{1}{2}} e_u^{n,*} \|_h \le C \left(  \|\be_{\mathbf{Q}}^{n}\|_{H^1_h}+\|e_{u}^{n}\|_{H^2_h} + |e_s^n| + \tau^2 + h^2 \right).
\]

For the prediction step, the scalar error equation corresponding to
\eqref{eq4_8} is
\ba\label{eqac}
\frac{1}{\tau}(e_s^{n,*}-e_s^n)
&= -\langle \mNQ^n,\eaa\rangle_h
-\langle \mNu^n,\eba\rangle_h
-\left\langle \mNQ^n-\mNQz(t_n),\da\right\rangle_h\\
&\quad
-\left\langle \mNu^n-\mNuz(t_n),\db\right\rangle_h
-T_{s}^{n,*},
\ed
where
\ba
T_s^{n,*}
:={}&
\delta_\tau s(t_{n+1})
+\left\langle
\mNQz(t_n),\delta_\tau\mathbf Q(t_{n+1})
\right\rangle_h
+\left\langle
\mNuz(t_n),\delta_\tau u(t_{n+1})
\right\rangle_h
\ed
and the temporal consistency and spatial quadrature estimates give
$|T_s^{n,*}|\le C(\tau+h^2)$.
Multiplying \eqref{eqac} by $2\tau{e}_s^{n,*}$ gives
    \begin{equation} \label{eq_es_identitya}
        \begin{aligned}
             &  |{e}_s^{n,*}|^2 - |e_s^n|^2 + |{e}_s^{n,*} - e_s^n|^2
            = {-2 \tau{e}_s^{n,*} \Big[ \langle \mNQ^n, \eaa \rangle_h + \langle \mNu^n, \eba \rangle_h \Big]}                  \\
             & \qquad- {2\tau{e}_s^{n,*} \Big[ \left\langle \mNQ^n-\mNQz(t_n), \da  \right\rangle_h + \left\langle \mNu^n-\mNuz(t_n), \db \right\rangle_h + T_{s}^{n,*}  \Big]}:= \mathcal{J}_3 + \mathcal{J}_4.
        \end{aligned}
    \end{equation}
    Applying  Young's inequality and invoking
Lemma~\ref{gn_bound0} and Theorem~\ref{th4_1}, we obtain
    \begin{equation} \label{eq_I2_bounda}
        \begin{aligned}
            \mathcal{J}_3 & \le 2 \mathcal{M}_2 |{e}_s^{n,*}| \left( \|\be_{\mathbf{Q}}^{n,*}-\be_{\mathbf{Q}}^n\|_h + \|e_{u}^{n,*}-e_{u}^n\|_h \right)                           \\
                          & \le \frac{1}{4} |{e}_s^{n,*}|^2 + C (\|\be_{\mathbf{Q}}^{n,*}-\be_{\mathbf{Q}}^n\|_h^2 +  \|e_{u}^{n,*}-e_{u}^n\|_h^2) \\
                          & \le \frac{1}{4} |{e}_s^{n,*}|^2 + C \left( \|\be_{\mathbf{Q}}^{n,*}\|_h^2 + \|\be_{\mathbf{Q}}^{n}\|_h^2 + \|e_{u}^{n,*}\|_h^2 + \|e_{u}^{n}\|_h^2 \right),
        \end{aligned}
    \end{equation}
where
$ \mathcal{M}_2=\max\{ C_{\mathbf{N}},\widetilde C_{\mathbf{N}},\mathcal{M}_1\} $.

    The Cauchy--Schwarz inequality, Young's inequality, and
    Lemma~\ref{thm_full_lipschitz} yield
    \begin{equation} \label{eq_I1_bounda}
        \begin{aligned}
            \mathcal{J}_4 & \le  2|{e}_s^{n,*}|\cdot   \left(C\|\mathbf{Q}(t_{n+1})-\mathbf{Q}(t_n)\|_h \left( |e_s^n| + \|\be_{\mathbf{Q}}^{n}\|_{H_h^1} + \|e_{u}^{n}\|_{H_h^2} + h^2 \right)\right.                                       \\
                          & \qquad \left.
            + C \|\mLu^{\frac{1}{2}}(u(t_{n+1})-u(t_n))\|_h  \| \mLu^{-\frac{1}{2}}(\mNu^n - \mNuz(t_n)) \|_h + C\tau(\tau + h^2) \right)            \\
            & \le  2|{e}_s^{n,*}|\cdot   \left(C \left( |e_s^n| + \|\be_{\mathbf{Q}}^{n}\|_{H_h^1} + \|e_{u}^{n}\|_{H_h^2} + h^2 \right)\right.                                       \\
                          & \qquad \left.
            + C \|(u(t_{n+1})-u(t_n))\|_{H_h^2} \left( |e_s^n| + \|\be_{\mathbf{Q}}^{n}\|_{H_h^1} + \|e_{u}^{n}\|_{H_h^2} + h^2 \right) + C\tau(\tau + h^2) \right)                                                                  \\
                          & \le\frac{1}{4}|{e}_s^{n,*}|^2 + C \left(   |e_s^n|^2 + \|\be_{\mathbf{Q}}^{n}\|_{H_h^1}^2 + \|e_{u}^{n}\|_{H_h^2}^2 + \tau^4 + h^4  \right).
        \end{aligned}
    \end{equation}
    Substituting the estimates for $\mathcal{J}_3$ and $\mathcal{J}_4$ into \eqref{eq_es_identitya} yields
    \begin{equation}
        \begin{aligned}
              |{e}_s^{n,*}|^2 - |e_s^n|^2 + |{e}_s^{n,*} - e_s^n|^2  &\le \frac{1}{2} |{e}_s^{n,*}|^2 + C\left(   |e_s^n|^2 + \|\be_{\mathbf{Q}}^{n}\|_{H_h^1}^2 + \|e_{u}^{n}\|_{H_h^2}^2 + \tau^4 + h^4   \right)\\
             & \quad + C \left( \|\be_{\mathbf{Q}}^{n,*}\|_h^2 + \|\be_{\mathbf{Q}}^{n}\|_h^2 + \|e_{u}^{n,*}\|_h^2 + \|e_{u}^{n}\|_h^2 \right).
        \end{aligned}
    \end{equation}
    Combining the bounds for $\be_{\mathbf{Q}}^{n,*}$, $e_u^{n,*}$, and $e_s^{n,*}$ with $\tau\le1$  yields the desired estimate
    \eqref{eq_error_s_star_bound}.
\end{proof}

\subsection{Convergence analysis of the fully discrete GSAV-ETD2 scheme}
Substitution of the exact solution into the numerical scheme \eqref{eq4_9}--\eqref{eq4_12}, followed by rearrangement, gives the following identities at time $t_{n+1}$:
\begin{subequations}\label{eqa_15}
\begin{align}
\mathbf{Q}(t_{n+1}) & = e^{-\tau \mLQ} \mathbf{Q}(t_n) + \int_0^\tau e^{-(\tau-\varsigma)\mLQ} \left( (1-\frac{\varsigma}{\tau})\mNQz(t_n) + \frac{\varsigma}{\tau}\mNQz(t_{n+1}) + \mathbf{T}_{\mathbf{Q}}^n(\varsigma) \right) d\varsigma,\label{eqa_15a}\\
u(t_{n+1}) & = e^{-\tau \mLu} u(t_n) + \int_0^\tau e^{-(\tau-\varsigma)\mLu} \left((1-\frac{\varsigma}{\tau}) \mNuz(t_n) + \frac{\varsigma}{\tau}\mNuz(t_{n+1}) + T_u^n(\varsigma) \right) d\varsigma,\label{eqa_15b}\\
\delta_\tau s(t_{n+1}) & =   -\left\langle \frac{1}{2}{(\mNQz(t_n)+\mNQz(t_{n+1}))}, \delta_\tau \mathbf{Q}(t_{n+1}) \right\rangle_h \no\\&- \left\langle  \frac{1}{2}{(\mNuz(t_n)+\mNuz(t_{n+1}))}, \delta_\tau u(t_{n+1}) \right\rangle_h  + T_s^n,\label{eqa_15c}
\end{align}
\end{subequations}
where the consistency errors $\mathbf{T}_{\mathbf Q}^n$, $T_u^n$, and $T_s^n$ are defined by
\begin{align}
\mathbf T_{\mathbf Q}^n(\varsigma)
&:=
\mathbf N(t_n+\varsigma)
-\left(1-\frac{\varsigma}{\tau}\right)\mathbf N(t_n)
-\frac{\varsigma}{\tau}\mathbf N(t_{n+1})
+\mathcal L_h\mathbf Q(t_n+\varsigma)-\mathcal L\mathbf Q(t_n+\varsigma),
\\
T_u^n(\varsigma)
&:=
\mathcal N(t_n+\varsigma)
-\left(1-\frac{\varsigma}{\tau}\right)\mathcal N(t_n)
-\frac{\varsigma}{\tau}\mathcal N(t_{n+1})
+\mathcal D_hu(t_n+\varsigma)-\mathcal Du(t_n+\varsigma),\\
T_s^n
&:=
-\frac1\tau\int_0^\tau
\left[
\bigl(\mathbf N(t_n+r),\mathbf Q_t(t_n+r)\bigr)
+
\bigl(\mathcal N(t_n+r),u_t(t_n+r)\bigr)
\right]\,dr\notag\\
&\quad
+\frac12
\left\langle
\mathbf N(t_n)+\mathbf N(t_{n+1}),
\delta_\tau\mathbf Q(t_{n+1})
\right\rangle_h
+\frac12
\left\langle
\mathcal N(t_n)+\mathcal N(t_{n+1}),
\delta_\tau u(t_{n+1})
\right\rangle_h .\label{eqrf}
\end{align}

Applying the nodal reformulation of Lemma~\ref{lem:etd2_reformulation}
to \eqref{eqa_15a}--\eqref{eqa_15b}, including their truncation terms,
and subtracting the resulting identities from
\eqref{eq:etd2_nodal_Q}--\eqref{eq:etd2_nodal_u} gives the error equations
\begin{align}
    \mq(\tau \mLQ)\ea + \mLQ \be_{\mathbf{Q}}^{n+1}
    &= \left(1-\frac{\varphi_2(\tau \mLQ)}{\varphi_1(\tau \mLQ)}\right)(\mNQ^n-\mNQz(t_n))\no\\&
     + \frac{\varphi_2(\tau \mLQ)}{\varphi_1(\tau \mLQ)} (\mNQ^{n,*}-\mNQz(t_{n+1})) - \widetilde{\mathbf{T}}_{\mathbf{Q}}^n,\label{eq_error_Q_final}\\
    \mq(\tau \mLu)\eb + \mLu e_{u}^{n+1}
    &= \left(1-\frac{\varphi_2(\tau \mLu)}{\varphi_1(\tau \mLu)}\right)(\mNu^n-\mNuz(t_n))\no\\&+ \frac{\varphi_2(\tau \mLu)}{\varphi_1(\tau \mLu)} (\mNu^{n,*}-\mNuz(t_{n+1})) - \widetilde{T}_u^n,\label{eq_error_u_final}
\end{align}
 where  $\widetilde{ \mathbf{T}}_{\mathbf{Q}}^n= \int_0^\tau \mathcal K_{\mLQ}(\varsigma)  \mathbf{T}_{\mathbf{Q}}^n(\varsigma) d\varsigma$ and $\widetilde{T}_u^n= \int_0^\tau \mathcal K_{\mLu}(\varsigma)  T_u^n(\varsigma) d\varsigma$.
Using
$\mathbf{T}_{\mathbf{Q}}^n(\varsigma)=\mathbf{T}_{\mathbf{Q}}^n(0)+\int_0^\varsigma\partial_r\mathbf{T}_{\mathbf{Q}}^n(r)\,dr$
and Fubini's theorem, we obtain
\ba
\widetilde{ \mathbf{T}}_{\mathbf{Q}}^n
={}&
\left(
\int_0^\tau
\mathcal K_{\mLQ}(\varsigma)\,d\varsigma
\right)\mathbf{T}_{\mathbf{Q}}^n(0)
+
\int_0^\tau
\mathcal K_{\mLQ}(\varsigma)
\left(
\int_0^\varsigma
\partial_r\mathbf{T}_{\mathbf{Q}}^n(r)\,dr
\right)d\varsigma
\nonumber\\
={}&
\mathbf{T}_{\mathbf{Q}}^n(0)
+
\int_0^\tau
\left(
\int_r^\tau
\mathcal K_{\mLQ}(\varsigma)\,d\varsigma
\right)
\partial_r\mathbf{T}_{\mathbf{Q}}^n(r)\,dr.
\ed
A similar identity holds for $\widetilde{T}_u^n$. To retain the
second-order temporal consistency after applying the transformation, we
estimate the derivatives of the residuals more precisely. From their
definitions, the fundamental theorem of calculus gives
\begin{align*}
\partial_\varsigma\mathbf T_{\mathbf Q}^n(\varsigma)
={}&
\partial_t\mathbf N(t_n+\varsigma)
-\frac{\mathbf N(t_{n+1})-\mathbf N(t_n)}{\tau}
+(\mathcal L_h-\mathcal L)\mathbf Q_t(t_n+\varsigma)
\\
={}&
\frac1\tau\int_0^\tau
\left[
\partial_t\mathbf N(t_n+\varsigma)
-\partial_t\mathbf N(t_n+\xi)
\right]d\xi
+(\mathcal L_h-\mathcal L)\mathbf Q_t(t_n+\varsigma),
\\
\partial_\varsigma T_u^n(\varsigma)
={}&
\partial_t\mathcal N(t_n+\varsigma)
-\frac{\mathcal N(t_{n+1})-\mathcal N(t_n)}{\tau}
+(\mathcal D_h-\mathcal D)u_t(t_n+\varsigma)
\\
={}&
\frac1\tau\int_0^\tau
\left[
\partial_t\mathcal N(t_n+\varsigma)
-\partial_t\mathcal N(t_n+\xi)
\right]d\xi
+(\mathcal D_h-\mathcal D)u_t(t_n+\varsigma).
\end{align*}
Therefore, the regularity assumptions \eqref{eqa_19},
Lemma~\ref{lem_parabolic_regularity_bootstrap}, and the second-order
consistency of the spatial difference operators imply that, for
$0\le\varsigma\le\tau$,
\begin{align*}
\|\partial_\varsigma\mathbf T_{\mathbf Q}^n(\varsigma)\|_h
&\le
\frac1\tau\int_0^\tau |\varsigma-\xi|\,d\xi\,
\|\partial_{tt}\mathbf N\|_{L^\infty(t_n,t_{n+1};\mathbf L^2)}
+Ch^2\|\mathbf Q_t\|_{L^\infty(t_n,t_{n+1};\mathbf H^4)}
\le C(\tau+h^2),
\\
\|\partial_\varsigma T_u^n(\varsigma)\|_h
&\le
\frac1\tau\int_0^\tau |\varsigma-\xi|\,d\xi\,
\|\partial_{tt}\mathcal N\|_{L^\infty(t_n,t_{n+1};L^2)}
+Ch^2\|u_t\|_{L^\infty(t_n,t_{n+1};H^6)}
\le C(\tau+h^2).
\end{align*}
Moreover, using the second-order consistency of the spatial difference operators and Lemma \ref{lem_parabolic_regularity_bootstrap}, we have
\ba
\|\mathbf T_{\mathbf Q}^n(0)\|_h+\|T_u^n(0)\|_h
&\le \left\|(\mathcal L_h-\mathcal L)\mathbf Q(t_n)\right\|_h+\left\|(\mathcal D_h-\mathcal D)u(t_n)\right\|_h
\\
&\le
Ch^2\left(
\|\mathbf Q\|_{L^\infty(t_n,t_{n+1};\mathbf H^4)}
+\|u\|_{L^\infty(t_n,t_{n+1};H^6)}
\right)
\le Ch^2.
\ed
Combining the above estimates with $\tau\le1$ gives
\ba\label{eq9a}
\|\widetilde{\mathbf T}_{\mathbf Q}^n\|_h
+\|\widetilde T_u^n\|_h
&\le
\|\mathbf T_{\mathbf Q}^n(0)\|_h+\|T_u^n(0)\|_h
+
\int_0^\tau
\left(
\|\partial_r\mathbf T_{\mathbf Q}^n(r)\|_h
+\|\partial_rT_u^n(r)\|_h
\right)dr
\\
&\le Ch^2+C\tau(\tau+h^2)
\le C(\tau^2+h^2).
\ed

Subtracting the \(s\)-equation \eqref{eqa_15c} from \eqref{eq4_12} gives the scalar error equation
\ba
   \frac{1}{\tau} ({e}_{s}^{n+1}-e_{s}^{n})
    &=- \left\langle \frac{1}{2}(\mNQ^n+\mNQ^{n,*}), \ea \right\rangle_h
    - \left\langle \frac{1}{2}( \mNu^n+ \mNu^{n,*}), \eb \right\rangle_h \notag\\
    &\quad - \frac{1}{2}\left\langle  \mNQ^n-\mNQz(t_n)+ \mNQ^{n,*}-\mNQz(t_{n+1}), \da \right\rangle_h \notag\\
    &\quad - \frac{1}{2}\left\langle  \mNu^n-\mNuz(t_n)+ \mNu^{n,*}-\mNuz(t_{n+1}), \db \right\rangle_h \notag\\&\quad
    -\frac{1}{4\tau}\langle  \mathcal{L}_h \dx \bQ^{n+1}, \dx \bQ^{n+1} \rangle_h-\frac{1}{4\tau}\langle \mathcal{D}_h \dx \bu^{n+1}, \dx \bu^{n+1} \rangle_h-
    T_s^n.\label{eq_error_s_final}
\ed
It remains to estimate the consistency error in the scalar equation \eqref{eqa_15c}.
Set
\[
\overline{\mathbf N}^{\,n}
:=\frac{\mathbf N(t_n)+\mathbf N(t_{n+1})}{2}
\]
and define the spatial quadrature residual by
\begin{align*}
R_{\mathbf Q,h}^n
:={}&
\left\langle
\overline{\mathbf N}^{\,n},
\delta_\tau\mathbf Q(t_{n+1})
\right\rangle_h
-
\left(
\overline{\mathbf N}^{\,n},
\delta_\tau\mathbf Q(t_{n+1})
\right),
\qquad
|R_{\mathbf Q,h}^n|\le Ch^2.
\end{align*}
Using
$\delta_\tau\mathbf Q(t_{n+1})
=\tau^{-1}\int_0^\tau\mathbf Q_t(t_n+r)\,dr$, we obtain
\begin{align*}
&\left|
\left\langle
\overline{\mathbf N}^{\,n},
\delta_\tau\mathbf Q(t_{n+1})
\right\rangle_h
-\frac1\tau\int_0^\tau
\bigl(\mathbf N(t_n+r),\mathbf Q_t(t_n+r)\bigr)\,dr
\right|
\\
&\quad\le |R_{\mathbf Q,h}^n|
+\left|
\left(
\frac1\tau\int_0^\tau
\bigl[\overline{\mathbf N}^{\,n}-\mathbf N(t_n+r)\bigr]\,dr,
\mathbf Q_t(t_n)
\right)
\right|
\\
&\qquad
+\frac1\tau\int_0^\tau
\left|
\left(
\overline{\mathbf N}^{\,n}-\mathbf N(t_n+r),
\mathbf Q_t(t_n+r)-\mathbf Q_t(t_n)
\right)
\right|\,dr
\\
&\quad\le Ch^2+C\tau^2
\left(
\|\partial_{tt}\mathbf N\|_{L^\infty(0,T;\mathbf L^2)}
\|\mathbf Q_t\|_{L^\infty(0,T;\mathbf L^2)}
+
\|\partial_t\mathbf N\|_{L^\infty(0,T;\mathbf L^2)}
\|\mathbf Q_{tt}\|_{L^\infty(0,T;\mathbf L^2)} \right).
\end{align*}
The same argument applied to $\mathcal N$ and $u$, together with
Lemma~\ref{lem_parabolic_regularity_bootstrap}, gives
\begin{align}
|T_s^n|
\le C\tau^2\Bigl(
&\|\partial_{tt}\mathbf N\|_{L^\infty(0,T;\mathbf L^2)}
 \|\mathbf Q_t\|_{L^\infty(0,T;\mathbf L^2)}
+\|\partial_t\mathbf N\|_{L^\infty(0,T;\mathbf L^2)}
 \|\mathbf Q_{tt}\|_{L^\infty(0,T;\mathbf L^2)}
\notag\\
&+\|\partial_{tt}\mathcal N\|_{L^\infty(0,T;L^2)}
 \|u_t\|_{L^\infty(0,T;L^2)}
+\|\partial_t\mathcal N\|_{L^\infty(0,T;L^2)}
 \|u_{tt}\|_{L^\infty(0,T;L^2)}
\Bigr)
+Ch^2
\le C(\tau^2+h^2).
\label{eq:Ts-bound}
\end{align}
Based on the estimates established in the preceding lemmas, we now provide
a detailed proof of Theorem~\ref{them4_1}.
\begin{proof}
Taking the discrete inner products of \eqref{eq_error_Q_final} and \eqref{eq_error_u_final} with $\ea$ and $\eb$, respectively, and adding the resulting identities gives
    \ba \label{eqa_17}
    &\left\langle  \psi(\tau \mLQ) \ea, \ea \right\rangle_h + \left\langle \mLQ \be_{\mathbf{Q}}^{n+1}, \ea \right\rangle_h
    + \left\langle  \psi(\tau \mLu) \eb, \eb \right\rangle_h + \left\langle \mLu e_{u}^{n+1}, \eb \right\rangle_h \\
    &= \left\langle   (1-\frac{\varphi_2(\tau \mLQ)}{\varphi_1(\tau \mLQ)})(\mNQ^n-\mNQz(t_n) ) + \frac{\varphi_2(\tau \mLQ)}{\varphi_1(\tau \mLQ)} (\mNQ^{n,*}-\mNQz(t_{n+1}) ) - \widetilde{ \mathbf{T}}_{\mathbf{Q}}^n, \ea \right\rangle_h \\
    &+ \left\langle (1-\frac{\varphi_2(\tau \mLu)}{\varphi_1(\tau \mLu)})(\mNu^n-\mNuz(t_n) ) + \frac{\varphi_2(\tau \mLu)}{\varphi_1(\tau \mLu)} (\mNu^{n,*}-\mNuz(t_{n+1}) )- \widetilde{T}_u^n, \eb \right\rangle_h.
    \ed
    Applying \eqref{eq:etd2_coercive_testing} to the successive error
    values and using \eqref{eq_inner_Q} yields
    \begin{equation} \label{eq_LHS_expanded_norm}
        \begin{aligned}
            \text{LHS} & = \iaQ{\ea} + \frac{1}{2\tau} \left[  \ilQ{\be_{\mathbf{Q}}^{n+1}} - \ilQ{\be_{\mathbf{Q}}^{n}}  + \left\langle \tau^2 \mLQ \ea, \ea \right\rangle_h \right]      \\
                       & \quad + \iau{\eb} + \frac{1}{2\tau} \left[ \ilu{e_{u}^{n+1}}  - \ilu{e_{u}^{n}}  +\left\langle \tau^2 \mLu \delta_\tau  e_{u}^{n+1}, \eb \right\rangle_h  \right] \\
                       & = \ibQ{\ea} + \frac{1}{2\tau} \left[  \ilQ{\be_{\mathbf{Q}}^{n+1}} - \ilQ{\be_{\mathbf{Q}}^{n}}   \right]
            + \ibu{\eb} + \frac{1}{2\tau} \left[ \ilu{e_{u}^{n+1}}  - \ilu{e_{u}^{n}}  \right].
        \end{aligned}
    \end{equation}
By Lemma~\ref{thm_full_lipschitz},
Lemmas~\ref{lem_phi_ineq} and~\ref{lem4_2}, and \eqref{eq9a}, the
right-hand side of \eqref{eqa_17} can be estimated as follows:
    \begin{equation} \label{eq_RHS_final_bound}
        \begin{aligned}
            \text{RHS} & \leq \frac{1}{4}\left( \ia{\ea} + \ia{\eb} \right) +  C \left( \ia{\mNu^n-\mNuz(t_n)} + \ia{\mNQ^n - \mNQz(t_n)} \right)  \\
                       & + C \left( \ia{\mNQ^{n,*}-\mNQz(t_{n+1})} + \ia{\mNu^{n,*}-\mNuz(t_{n+1})} \right)+ C (\tau^2 + h^2)^2
                           \\
                       & \leq \frac{1}{4} \left( \|\ea\|_h^2 + \|\eb\|_h^2 \right) + C (\tau^2 + h^2)^2                                                                                                                    \\
                       & \quad + C \left( |e_s^n|^2 + \|\be_{\mathbf{Q}}^{n}\|_{H_h^2}^2 + \|e_{u}^{n}\|_{H_h^2}^2 \right) + C \left( |e_s^{n,*}|^2 + \|\be_{\mathbf{Q}}^{n,*}\|_{H_h^2}^2 + \|e_{u}^{n,*}\|_{H_h^1}^2 + \|e_{u}^{n,*}\|_{H_h^2}^2 \right) \\
                       & \leq \frac{1}{4} \left( \|\ea\|_h^2 + \|\eb\|_h^2 \right)
            + C \left( |e_s^n|^2 + \|\be_{\mathbf{Q}}^{n}\|_{H_h^1}^2 + \|\be_{\mathbf{Q}}^{n}\|_{H_h^2}^2+ \|e_{u}^{n}\|_{H_h^2}^2 \right) + C (\tau^2 + h^2)^2.
        \end{aligned}
    \end{equation}
 Moreover, \eqref{eq_norm_bounds_1}
    gives $\|\ea\|_h^2+\|\eb\|_h^2\le \ibQ{\ea}+\ibu{\eb}$.

    Substituting \eqref{eq_LHS_expanded_norm} and
    \eqref{eq_RHS_final_bound} into \eqref{eqa_17} and using
    \eqref{eq_norm_bounds_1} gives the error energy inequality
    \begin{equation} \label{eq_error_energy_recursive}
        \begin{aligned}
             & \frac{3}{4} \left( \ibQ{\ea} + \ibu{\eb} \right) + \frac{1}{2\tau} \left[  \ilQ{\be_{\mathbf{Q}}^{n+1}} - \ilQ{\be_{\mathbf{Q}}^{n}}   + \ilu{e_{u}^{n+1}}  - \ilu{e_{u}^{n}}  \right] \\
             & \leq C \left( |e_s^n|^2 + \|\be_{\mathbf{Q}}^{n}\|_{H_h^2}^2 + \|e_{u}^{n}\|_{H_h^2}^2 \right) + C (\tau^2 + h^2)^2.
        \end{aligned}
    \end{equation}

    We next turn to the scalar error equation \eqref{eq_error_s_final} for $s$.
    Multiplying it by $2{e}_s^{n+1}$ gives
    \begin{equation} \label{eq_es_identity}
        \begin{aligned}
             & \frac{1}{\tau} \left( |{e}_s^{n+1}|^2 - |e_s^n|^2 + |{e}_s^{n+1} - e_s^n|^2 \right)
             = {\begin{aligned}[t]
             -{e}_s^{n+1}\Big[& \langle (\mNQ^n+ \mNQ^{n,*}), \ea \rangle_h\\
             & + \langle (  \mNu^n+\mNu^{n,*}), \eb \rangle_h \Big]
             \end{aligned}}\\
             & \qquad- {\begin{aligned}[t]
             {e}_s^{n+1}\Big[& \left\langle  \mNQ^n-\mNQz(t_n)+ \mNQ^{n,*}-\mNQz(t_{n+1}), \da \right\rangle_h\\
             & + \left\langle \mNu^n-\mNuz(t_n)+\mNu^{n,*}-\mNuz(t_{n+1}), \db \right\rangle_h + 2T_s^n \Big]
             \end{aligned}}\\
             &\qquad{-\frac{{e}_s^{n+1}}{2\tau}\Big[ \left\langle  \mathcal{L}_h \dx \bQ^{n+1}, \dx \bQ^{n+1} \right\rangle_h +\left\langle  \mathcal{D}_h \dx \bu^{n+1}, \dx \bu^{n+1} \right\rangle_h\Big]}=: \mathcal{P}_1 + \mathcal{P}_2 + \mathcal{P}_3.
        \end{aligned}
    \end{equation}
For $\mathcal{P}_1$, applying the Cauchy--Schwarz inequality and
Young's inequality and invoking
Lemma~\ref{gn_bound0} and Theorem~\ref{th4_1}, we obtain
    \begin{equation} \label{eq_I2_bound}
        \begin{aligned}
            \mathcal{P}_1 & \le 2  \mathcal{M}_2 |{e}_s^{n+1}| \left( \|\ea\|_h + \|\eb\|_h \right)                           \\
                          & \le 16   \mathcal{M}_2^2 |{e}_s^{n+1}|^2 + \frac{1}{8} \|\ea\|_h^2 + \frac{1}{8} \|\eb\|_h^2.
        \end{aligned}
    \end{equation}
  For $\mathcal{P}_2$, applying the Cauchy--Schwarz inequality and
  Young's inequality, invoking Lemma~\ref{gn_bound0}, and using
  Lemma~\ref{thm_full_lipschitz} and \eqref{eq:Ts-bound} gives
    \begin{equation} \label{eq_I1_bound}
        \begin{aligned}
            \mathcal{P}_2 & \le  2|{e}_s^{n+1}|\cdot   \left(C \left( |e_s^n| + \|\be_{\mathbf{Q}}^{n}\|_{H_h^1}+ \|\be_{\mathbf{Q}}^{n}\|_{H_h^2} + \|e_{u}^{n}\|_{H_h^2} + h^2 \right)\right.                                      \\
                          & \qquad \left.
            + C \left( |e_s^{n,*}| + \|\be_{\mathbf{Q}}^{n,*}\|_{H_h^1} + \|\be_{\mathbf{Q}}^{n,*}\|_{H_h^2} + \|e_{u}^{n,*}\|_{H_h^2} + h^2 \right) + C(\tau^2 + h^2) \right)                                                                        \\
                          & \le |{e}_s^{n+1}|^2 + C \left(   |e_s^n|^2 + \|\be_{\mathbf{Q}}^{n}\|_{H_h^1}^2 + \|\be_{\mathbf{Q}}^{n}\|_{H_h^2}^2+ \|e_{u}^{n}\|_{H_h^2}^2 + (\tau^2 + h^2)^2  \right).
        \end{aligned}
    \end{equation}

For $\mathcal{P}_3$, applying the Cauchy--Schwarz inequality and
Young's inequality gives
    \begin{equation} \label{eq_I3_bound}
        \begin{aligned}
            \mathcal{P}_3 & \le 2 |{e}_s^{n+1}| \cdot \frac{1}{2\tau} \left(  \left\langle  \mathcal{L}_h \dx \bQ^{n+1}, \dx \bQ^{n+1} \right\rangle_h +\left\langle  \mathcal{D}_h \dx \bu^{n+1}, \dx \bu^{n+1} \right\rangle_h  \right)\\
                          &\le |{e}_s^{n+1}|^2 + \frac{1}{2}\left(  \left\langle  \mathcal{L}_h \dx \bQ^{n+1}, \dxt\bQ^{n+1} \right\rangle_h^2+\left\langle  \mathcal{D}_h \dx \bu^{n+1}, \dxt\bu^{n+1} \right\rangle_h^2  \right).
        \end{aligned}
    \end{equation}
Subtracting  \eqref{eq4_6} from  \eqref{eq4_9}
gives
\begin{equation}
\bQ^{n+1}-\bQ^{n,*}
=
\tau\varphi_2(\tau\mLQ)
\bigl(\mNQ^{n,*}-\mNQ^n\bigr).
\label{eq_Q_stage_difference}
\end{equation}
Substituting \eqref{eq_Q_stage_difference} into the second term on the
right-hand side of \eqref{eq_I3_bound}, and then applying
Lemmas~\ref{th4_1}, \ref{thm_full_lipschitz} and~\ref{lem4_2}, we obtain
\ba\label{eqcn}
 &\left\langle  \mathcal{L}_h \dx \bQ^{n+1},\dxt\bQ^{n+1} \right\rangle_h
\\&=
  \left\langle  \mathcal{L}_h \tau\varphi_2(\tau\mLQ)
\bigl(\mNQ^{n,*}-\mNQ^n\bigr),\varphi_2(\tau\mLQ)
\bigl(\mNQ^{n,*}-\mNQ^n\bigr) \right\rangle_h\\
&\le \left\|
 \mNQ^{n,*}-\mNQ^n
\right\|_h^2\\
&\le
  \left\|
 (
\mNQ^{n,*}-\mNQz(t_{n+1})
+
\bigl(
\mNQz(t_{n+1})-\mNQz(t_n)
\bigr)
+
\bigl(
\mNQz(t_n)-\mNQ^n))
\right\|_h^2
\\
&\le
 C \left(\left\|
\mNQ^{n,*}-\mNQz(t_{n+1})\right\|_h^2
+  \left(
\int_{t_n}^{t_{n+1}}
\left\|
\partial_t\mNQz(t)
\right\|_h\,dt
\right)^2
+
 \left\|
\mNQz(t_n)-\mNQ^n
\right\|_h^2\right)
\\&\le
 C \left(\left\|\mNQ^{n,*}-\mNQz(t_{n+1})\right\|_h
+  \tau^2
+
 \left\|\mNQz(t_n)-\mNQ^n\right\|_h\right)
 \\&\le
 C\Bigl(
\|\be_{\mathbf Q}^{n}\|_{H_h^1}
+
\|e_u^{n}\|_{H_h^2}
+
|e_s^{n}|+\tau^2+h^2
\Bigr)
 .
\ed
Applying the analogous argument to the $u$-component of
\eqref{eq_I3_bound} and combining it with \eqref{eqcn} gives
\begin{align}
&\left\langle\mathcal L_h\dx\bQ^{n+1},
\dxt\bQ^{n+1}
\right\rangle_h^2+\left\langle\mathcal D_h\dx\bu^{n+1},
\dxt\bu^{n+1}
\right\rangle_h^2
\nonumber\\
  &\le
C\Bigl(
\|\be_{\mathbf{Q}}^n\|_{H_h^1}^2
+
\|\be_{\mathbf{Q}}^n\|_{H_h^2}^2
+
\|e_u^n\|_{H_h^2}^2
+
|e_s^n|^2+(\tau^2 + h^2)^2
\Bigr).
\label{eq_u_stage_energy_bound}
\end{align}
Substituting \eqref{eq_u_stage_energy_bound} into \eqref{eq_I3_bound} and applying Lemma~\ref{thm_full_lipschitz} yields
\begin{equation} \label{eq_I3_final_bound}
    \begin{aligned}
        \mathcal{P}_3 & \le |{e}_s^{n+1}|^2 + C \left( |e_s^n|^2 + \|\be_{\mathbf{Q}}^{n}\|_{H_h^1}^2 + \|\be_{\mathbf{Q}}^{n}\|_{H_h^2}^2 + \|e_{u}^{n}\|_{H_h^2}^2 + (\tau^2 + h^2)^2 \right).
    \end{aligned}
\end{equation}

    Substituting \eqref{eq_I2_bound}, \eqref{eq_I1_bound}, and \eqref{eq_I3_final_bound} into \eqref{eq_es_identity} yields
    \begin{equation} \label{eq_es_energy_recursive}
        \begin{aligned}
            \frac{1}{\tau} \left( |{e}_s^{n+1}|^2 - |e_s^n|^2 \right)
            &\le \left( 16    \mathcal{M}_2^2 + 1 \right) |{e}_s^{n+1}|^2 + \frac{1}{8} \|\ea\|_h^2 + \frac{1}{8} \|\eb\|_h^2\\
            &\quad + C \left( |e_s^n|^2 + \|\be_{\mathbf{Q}}^{n}\|_{H_h^1}^2 + \|\be_{\mathbf{Q}}^{n}\|_{H_h^2}^2 \right.\left.  + \|e_{u}^{n}\|_{H_h^2}^2 + (\tau^2 + h^2)^2 \right).
        \end{aligned}
    \end{equation}
    Combining \eqref{eq_es_energy_recursive} with \eqref{eq_error_energy_recursive} gives
    \begin{equation} \label{eq_unified_single}
        \begin{aligned}
             & \frac{1}{\tau} \left( \ilQ{\be_{\mathbf{Q}}^{n+1}} - \ilQ{\be_{\mathbf{Q}}^{n}}   + \ilu{e_{u}^{n+1}}  - \ilu{e_{u}^{n}}  + |e_s^{n+1}|^2 - |e_s^n|^2 \right) \\
             & \le  \left( 16    \mathcal{M}_2^2 + 1 \right) |{e}_s^{n+1}|^2+C \left( |e_s^n|^2 + \|\be_{\mathbf{Q}}^{n}\|_{H_h^1}^2 + \|\be_{\mathbf{Q}}^{n}\|_{H_h^2}^2 + \|e_{u}^{n}\|_{H_h^2}^2 + (\tau^2 + h^2)^2  \right).
        \end{aligned}
    \end{equation}
    Provided that
$\tau(16\mathcal M_2^2+1)\le \frac12$, we multiply
\eqref{eq_unified_single} by $\tau$ and use
$(1-x)^{-1}\le1+2x$ for $0\le x\le\frac12$. We then obtain a constant $C_E>0$ such that
    \begin{equation} \label{eq_unified_single1}
        \begin{aligned}
             & \frac{1}{\tau} \left( \ilQ{\be_{\mathbf{Q}}^{n+1}} - \ilQ{\be_{\mathbf{Q}}^{n}}   + \ilu{e_{u}^{n+1}}  - \ilu{e_{u}^{n}}  + |e_s^{n+1}|^2 - |e_s^n|^2 \right) \\
             & \le C_E \left( |e_s^n|^2 + \|\be_{\mathbf{Q}}^{n}\|_{H_h^1}^2 + \|\be_{\mathbf{Q}}^{n}\|_{H_h^2}^2 + \|e_{u}^{n}\|_{H_h^2}^2 + (\tau^2 + h^2)^2  \right).
        \end{aligned}
    \end{equation}

    We next turn to the $H_h^2$-norm of the error $\be_{\mathbf{Q}}^{n}$.
    Taking the discrete inner product of the time-shifted equation
    \eqref{eq_error_Q_final} with $\mLQ \be_{\mathbf{Q}}^{n}$ gives
    \begin{equation} \label{eq_H2_bound_shifted_detailed}
        \begin{aligned}
              \left\langle  \mQ \delta_\tau \be_{\mathbf{Q}}^{n}, \mLQ \be_{\mathbf{Q}}^{n} \right\rangle_h + \|\mLQ \be_{\mathbf{Q}}^{n}\|_h^2
             &= \left\langle
    \left(1-\frac{\varphi_2(\tau \mLQ)}{\varphi_1(\tau \mLQ)}\right)
    \bigl(\mNQ^{n-1}-\mNQz(t_{n-1})\bigr)\right.\\
             &\left.
             +\frac{\varphi_2(\tau \mLQ)}{\varphi_1(\tau \mLQ)}
    \bigl(\mNQ^{n-1,*}-\mNQz(t_n)\bigr)
    -\widetilde{\mathbf T}_{\mathbf Q}^{n-1},
             \mLQ \be_{\mathbf{Q}}^{n} \right\rangle_h
             =:\mathcal{P}_4.
        \end{aligned}
    \end{equation}
    The self-adjointness of $\mQ$ and $\mLQ$ and identity \eqref{eq_inner_Q} give the following expression for the first term on the left-hand side of \eqref{eq_H2_bound_shifted_detailed}:
    \begin{equation}\label{eq_source_prev_step}
        \begin{aligned}
            \left\langle  \mQ\delta_\tau \be_{\mathbf{Q}}^{n}, \mLQ\be_{\mathbf{Q}}^{n} \right\rangle_h & = \left\langle  \mQ\mLQ\delta_\tau \be_{\mathbf{Q}}^{n}, \be_{\mathbf{Q}}^{n} \right\rangle_h
            \\&=\frac{1}{2\tau} \left( \icQ{\be_{\mathbf{Q}}^{n}} - \icQ{\be_{\mathbf{Q}}^{n-1}}+\icQ{\delta\be_{\mathbf{Q}}^{n}} \right).
        \end{aligned}
    \end{equation}
 Applying Young's inequality, Lemma~\ref{thm_full_lipschitz}, and
 estimate~\eqref{eq9a} to $\mathcal{P}_4$ gives
    \begin{align}
        \mathcal{P}_4
        &\le \frac{1}{2}(\|\mNQ^{n-1}-\mNQz(t_{n-1})\|_h^2+\|\mNQ^{n-1,*}-\mNQz(t_n)\|_h^2+\|\widetilde{\mathbf T}_{\mathbf Q}^{n-1}\|_h^2)
        +\frac{1}{2}\|\mLQ \be_{\mathbf{Q}}^{n}\|_h^2\no\\
        &\le C \left( |e_s^{n-1}|^2
        + \|\be_{\mathbf{Q}}^{n-1}\|_{H_h^1}^2
        + \|e_{u}^{n-1}\|_{H_h^2}^2
        +|e_s^{n-1,*}|^2
        + \|\be_{\mathbf{Q}}^{n-1,*}\|_{H_h^1}^2
        + \|e_{u}^{n-1,*}\|_{H_h^2}^2\right)\no\\
        &\quad + C (\tau^2 + h^2)^2
        + \frac{1}{2} \|\mLQ \be_{\mathbf{Q}}^{n}\|_h^2.
        \label{eq_source_prev_step2}
    \end{align}
  Substituting \eqref{eq_source_prev_step} and
\eqref{eq_source_prev_step2} into
\eqref{eq_H2_bound_shifted_detailed}, and using
Lemma~\ref{lem4_2} together with the norm-equivalence estimate
\[
\|{\be_{\mathbf Q}^{n}}\|_{H_h^2}
\le
C_L\|\mathcal L_h \be_{\mathbf{Q}}^{n}\|_h,
\]
we obtain
    \begin{align}
       & \frac{C_L^2}{\tau} \left( \icQ{\be_{\mathbf{Q}}^{n}} - \icQ{\be_{\mathbf{Q}}^{n-1}} \right)+\|\be_{\mathbf{Q}}^{n}\|_{H_h^2}^2
       \le C \left( |e_s^{n-1}|^2 + \|\be_{\mathbf{Q}}^{n-1}\|_{H_h^1}^2 + \|e_{u}^{n-1}\|_{H_h^2}^2 + (\tau^2 + h^2)^2 \right).
       \label{eq_H2_full_bound_equiv}
    \end{align}
        Define $\mathcal Y^n
    :=\|\be_{\mathbf Q}^n\|_{\mathcal L_h}^2
    +\|e_u^n\|_{\mathcal D_h}^2+|e_s^n|^2$.
    Multiplying
    \eqref{eq_H2_full_bound_equiv} by $C_E $ and adding the result to
    \eqref{eq_unified_single1} gives, for $n\ge1$,
    \begin{align}
    \frac{1}{\tau}\left[
    \mathcal Y^{n+1}-\mathcal Y^n
    +C_E  C_L^2\left(
    \icQ{\be_{\mathbf Q}^n}
    -\icQ{\be_{\mathbf Q}^{n-1}}
    \right)\right]
  \le
    C\left(
    \mathcal Y^n+\mathcal Y^{n-1}
    +(\tau^2+h^2)^2\right).
    \label{eq_augmented_energy_recursive}
    \end{align}

By the initial error definition \eqref{eqcv1}, together with
\eqref{eq_initial_error_bound}, and taking $n=0$ in \eqref{eq_unified_single}, we have
\begin{equation}\label{eqcf1}
    \mathcal Y^0
    =|e_s^0|^2
    \le Ch^4,
    \qquad
    \|\be_{\mathbf Q}^0\|_{\psi_{\mathcal L}^2}^2=0,\quad \mathcal Y^1
    \le C\left(\tau(\tau^2+h^2)^2+h^4\right).
\end{equation}
    Summing \eqref{eq_augmented_energy_recursive} from
    $n=1$ to $m$ therefore gives
    \ba\label{eqa_21}
    \mathcal Y^{m+1}+C_E  C_L^2
    \icQ{\be_{\mathbf Q}^m}
    &\le \mathcal Y^1
    +CT(\tau^2+h^2)^2
    +C\tau\mathcal Y^0
    +2C\tau\sum_{k=1}^{m}\mathcal Y^k.
    \ed
    Applying the discrete Gr\"onwall inequality to \eqref{eqa_21} yields
            \ba \label{eqa_24}
    \mathcal{Y}^{m+1}
    &\le \left( \mathcal{Y}^1+C\tau\mathcal Y^0 + C T (\tau^2 + h^2)^2 \right) \exp\left( \sum_{k=0}^{m} 2C\tau \right) \\
    &\le \left( C(\tau+T) (\tau^2 + h^2)^2+C(1+\tau)h^4 \right) e^{2C(T+\tau)}\\
    &\le C(T,C_L,\mathcal{M}_2) (\tau^4 + h^4).
    \ed

By virtue of the norm equivalence, we can obtain the following error estimate:
\[
\|\be_{\mathbf Q}^{n}\|_{H_h^1}
+\|e_u^n\|_{H_h^2}
+|e_s^n|
\le C_{err}(T,\mathcal{M}_2,C_L)(\tau^2+h^2),
\quad 0\le n\le m+1.
\]
Taking $h$ and $\tau$ sufficiently small, we have
\[
s_h^n=s(t_n)+e_s^n
\ge -M_s-C_{err}(\tau^2+h^2)\ge -M_s-\frac12>-M_s-1=-C_s,
\quad 0\le n\le m+1.
\]
Therefore, the bootstrap assumption is improved and can be continued to the
next time level. By induction, it holds for all
$0\le n\le N_T$.
\end{proof}

\section{Numerical experiments}\label{section6}
We now assess the accuracy, energy behavior, and pattern-resolving capability
of the GSAV--ETD2 scheme.  Section~\ref{subsec:convergence-tests} verifies the
predicted convergence rates.  Section~\ref{subsec:periodic-self-assembly}
examines wavelength selection and dislocation-mediated layer fusion in
periodic cells,
Section~\ref{subsec:confined-2d} considers several confined geometries, and
Section~\ref{subsec:three-dimensional-test} resolves a three-dimensional
screw-dislocation annihilation event.  Unless explicitly stated otherwise,
the reported energy curves are obtained directly from the same numerical
trajectory as the displayed fields; no post-processing relaxation is
applied.

\subsection{Convergence tests}\label{subsec:convergence-tests}
Following \cite{shi2025modified}, we consider
$\vT<\vT_2^*<\vT_1^*$, corresponding to a deep quench into the stable SmA
regime.  We set $a_1=a_2=1$, $\vT=-1$, $\vT_1^*=1$, and $\vT_2^*=0$, so that
$\A=-2$ and $a=-1$.  The remaining parameters are
\begin{equation*}
L_d=1,\quad K=0.1,\quad \B=0,\quad \C=4,\quad b=0,\quad c=3,
\quad q=4,\quad B_0=10^{-3},\quad s_+=\sqrt{-2\A/\C}.
\end{equation*}
We take $\zeta(x)=e^x$ and $\kappa=3$.

We evaluate the convergence rates of the proposed scheme by computing the discrete errors in $\mathbf{Q}$, $u$, and $s$ in several norms at the final time $T=1$.
For the temporal test, we fix $h=1/128$ and use
$\tau=2^{-k}\tau_{1}$, where $k=0,1,\ldots,7$ and
$\tau_{1}=2^{-4}$.  The numerical solution computed with
$\tau=2^{-10}\tau_{1}$ serves as the benchmark solution.  For the spatial
test, the temporal step is chosen sufficiently small that its contribution is
negligible relative to the spatial error, and the benchmark is computed on a
further refined grid.  We prescribe the initial conditions
\ba\no
\mathbf{Q}^0(x,y) = S \left( \mathbf{n}\mathbf{n}^T - \frac{1}{2}\mathbf{I}_2 \right),\quad \mathbf{n} = (\cos 2\pi q(x+y), \sin 2\pi q(x+y))^T,
\ed
where $S$ is set to $0.25$ and $u$ is initialized by $u^0(x,y) = \cos(2\pi q x)$.
\begin{table}[htbp]
    \centering
    \setlength{\tabcolsep}{3pt}
    \begin{tabular}{l cc cc c}
        \toprule
        \multirow{2}{*}{$\tau$} & \multicolumn{2}{c}{$\mathbf{Q}$} & \multicolumn{2}{c}{$u$} & \multicolumn{1}{c}{$s$} \\
        \cmidrule(lr){2-3} \cmidrule(lr){4-5} \cmidrule(lr){6-6}
                                & $\ell^2$-norm & $H_h^1$-norm & $\ell^2$-norm & $H_h^2$-norm & Absolute error \\
        \midrule
        $2^{-4}$  & 7.13e-4 (--) & 8.73e-3 (--) & 3.06e-2 (--) & 7.82e-1 (--) & 1.37e-6 (--) \\
        $2^{-5}$  & 2.13e-4 (1.74) & 3.17e-3 (1.46) & 1.59e-2 (0.94) & 4.07e-1 (0.94) & 8.49e-7 (0.70) \\
        $2^{-6}$ & 5.96e-5 (1.84) & 1.09e-3 (1.54) & 6.65e-3 (1.26) & 1.74e-1 (1.22) & 3.34e-7 (1.34) \\
        $2^{-7}$ & 1.59e-5 (1.90) & 3.40e-4 (1.68) & 2.29e-3 (1.54) & 6.18e-2 (1.50) & 1.07e-7 (1.64) \\
        $2^{-8}$ & 4.14e-6 (1.95) & 9.52e-5 (1.83) & 6.73e-4 (1.76) & 1.85e-2 (1.74) & 3.08e-8 (1.80) \\
        $2^{-9}$ & 1.05e-6 (1.97) & 2.49e-5 (1.94) & 1.80e-4 (1.90) & 4.98e-3 (1.89) & 8.30e-9 (1.89) \\
        $2^{-10}$ & 2.66e-7 (1.99) & 6.32e-6 (1.98) & 4.64e-5 (1.96) & 1.28e-3 (1.95) & 2.16e-9 (1.94) \\
        $2^{-11}$ & 6.69e-8 (1.99) & 1.59e-6 (1.99) & 1.18e-5 (1.98) & 3.26e-4 (1.98) & 5.50e-10 (1.97) \\
        \bottomrule
    \end{tabular}
    \caption{Errors and convergence rates for $\mathbf{Q}$, $u$, and $s$ in the indicated norms.}
    \label{tab_error_all_variables_combined1}
\end{table}

\begin{table}[htbp]
    \centering
    \setlength{\tabcolsep}{5pt}
    \begin{tabular}{lccccc}
        \toprule
        \multirow{2}{*}{$h$}
        & \multicolumn{2}{c}{$\mathbf{Q}$}
        & \multicolumn{2}{c}{$u$}
        & $s$ \\
        \cmidrule(lr){2-3}
        \cmidrule(lr){4-5}
        \cmidrule(lr){6-6}
        & $\ell^2$-norm
        & $H_h^1$-norm
        & $\ell^2$-norm
        & $H_h^2$-norm
        & absolute error \\
        \midrule

        $2^{-6}$
        & $3.93\mathrm{e}{-2}$ (--)
        & $4.02\mathrm{e}{-1}$ (--)
        & $1.42\mathrm{e}{-2}$ (--)
        & $1.69\mathrm{e}{0}$ (--)
        & $1.63\mathrm{e}{-3}$ (--) \\

        $2^{-7}$
        & $1.05\mathrm{e}{-2}$ (1.90)
        & $1.17\mathrm{e}{-1}$ (1.79)
        & $4.01\mathrm{e}{-3}$ (1.83)
        & $6.17\mathrm{e}{-1}$ (1.45)
        & $6.62\mathrm{e}{-4}$ (1.30) \\

        $2^{-8}$
        & $2.68\mathrm{e}{-3}$ (1.98)
        & $3.02\mathrm{e}{-2}$ (1.95)
        & $1.03\mathrm{e}{-3}$ (1.97)
        & $1.68\mathrm{e}{-1}$ (1.88)
        & $2.31\mathrm{e}{-4}$ (1.52) \\

        $2^{-9}$
        & $6.72\mathrm{e}{-4}$ (1.99)
        & $7.63\mathrm{e}{-3}$ (1.99)
        & $2.58\mathrm{e}{-4}$ (1.99)
        & $4.28\mathrm{e}{-2}$ (1.97)
        & $6.70\mathrm{e}{-5}$ (1.78) \\

        $2^{-10}$
        & $1.68\mathrm{e}{-4}$ (2.00)
        & $1.91\mathrm{e}{-3}$ (2.00)
        & $6.45\mathrm{e}{-5}$ (2.00)
        & $1.08\mathrm{e}{-2}$ (1.99)
        & $1.74\mathrm{e}{-5}$ (1.95) \\

        \bottomrule
    \end{tabular}
    \caption{Spatial errors and convergence rates for $\mathbf{Q}$, $u$,
    and $s$ in various norms.}
    \label{tab_spatial_error_all_variables}
\end{table}

Tables~\ref{tab_error_all_variables_combined1} and
\ref{tab_spatial_error_all_variables} show that the rates approach two for
$\mathbf Q$ and $u$ in all the indicated norms.  The auxiliary variable also
exhibits asymptotically second-order behavior.  These results agree with the
fully discrete estimate of Theorem~\ref{them4_1}.

\subsection{Energy dissipation, wavelength selection, and
dislocation-mediated layer fusion}
\label{subsec:periodic-self-assembly}

We first test whether the scheme reproduces the preferred layer spacing of
the mLdG energy.  On the periodic unit square, we prescribe
\begin{equation}
\mathbf Q_h^0(x_i,y_j)
=
\begin{pmatrix}
\xi_{1,ij}&\xi_{2,ij}\\
\xi_{2,ij}&-\xi_{1,ij}
\end{pmatrix},
\qquad
u^0(x_i,y_j)=\cos(2\pi qx_i),
\label{eq:numerical-random-layered-initial-data}
\end{equation}
where $\xi_{1,ij}$ and $\xi_{2,ij}$ are independent samples from the uniform
distribution on $[-0.5,0.5]$.  We use the coefficients of
Section~\ref{subsec:convergence-tests}, $h=1/128$, $\tau=2^{-5}$, and
$T=50$, and repeat the computation for $q=4,6$, and $10$.

\begin{figure}[htbp]
  \centering
  \includegraphics[width=0.96\textwidth]{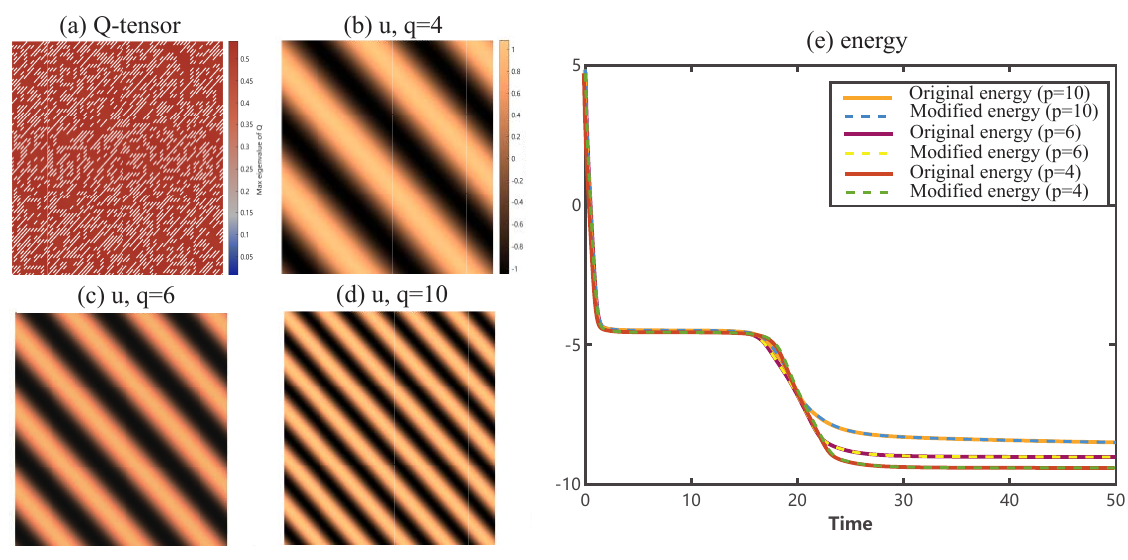}
  \caption{Wavelength selection from random orientational order and a
  layered initial density.  Panel (a) shows the randomly oriented initial
  $\mathbf Q$-tensor field, panels (b)--(d) show the final density field for
  $q=4,6$, and $10$, respectively, and panel (e) compares the corresponding
  original and modified energies on $0\le t\le50$.  The parameter denoted by
  $p$ in the embedded legend is the wave number $q$.}
  \label{fig:periodic-wavenumber-selection}
\end{figure}

As $q$ increases, the number of density interfaces increases and their
spacing decreases, in agreement with the preferred wavelength $2\pi/q$.
Despite the common disordered orientational initialization, each run
produces a regular lamellar density field with the spacing selected by its
value of $q$.  For all three values of $q$, the original and modified
energies decay throughout the computation and remain visually close.  The
rapid initial decrease followed by a slower relaxation separates the
formation of local layers from their subsequent orientational reorganization.

Layer fusion has a direct physical interpretation in smectics rather than
being merely a numerical change of morphology.  Because the layered phase
breaks translational symmetry, the termination or merger of a layer is the
characteristic signature of an edge dislocation.  In particular, Xia and
Han~\cite{xia2024simple} recovered a single SmA edge-dislocation profile in
which two layers merge into one by prescribing nine layers in one half of
the cell and ten in the other.  Their construction is
supported by nonperiodic boundary constraints and cannot be copied directly
to a doubly periodic domain because it does not match across the periodic
boundary.  Moreover, the present smooth periodic construction introduces no
boundary-supported net layer-count mismatch.  We therefore do not attempt to
reproduce their isolated equilibrium dislocation.  Instead, we use a smooth,
exactly periodic initial state to examine a periodically compatible,
net-neutral realization of the corresponding dynamical mechanism of layer
reconnection and removal.

More precisely, we use the periodic cell $\Omega=(0,2\pi)^2$ with $N=64$,
$\tau=10^{-2}$, and $T=100$.  The model parameters are
\begin{align*}
&K=0.1,\quad \A=-1,\quad \B=0,\quad \C=2,\quad
a=-5,\quad b=0,\quad c=5,\\
&q=5,\quad B_0=10^{-3},\quad
\kappa_{\mathbf Q}=\kappa_u=1.
\end{align*}
Writing $\mathbf n^0=(\cos(q(x+y)),\sin(q(x+y)))^T$, we prescribe the
orientationally incompatible layered state
\begin{equation*}
\mathbf Q^0=0.8\left(\mathbf n^0(\mathbf n^0)^T-\tfrac12\mathbf I_2\right),
\qquad
u^0=0.8\cos(qx)\sin(qy)
=0.4\bigl[\sin(q(x+y))+\sin(q(y-x))\bigr].
\end{equation*}
Thus $u^0$ is a superposition of two oblique layer families with the same
preferred wave number, whereas $\mathbf Q^0$ does not select a common global
layer normal.  This periodic mode competition provides a boundary-free
driving mechanism for a localized reconnection event without prescribing an
isolated defect.

\begin{figure}[htbp]
  \centering
  \includegraphics[width=0.7\textwidth]{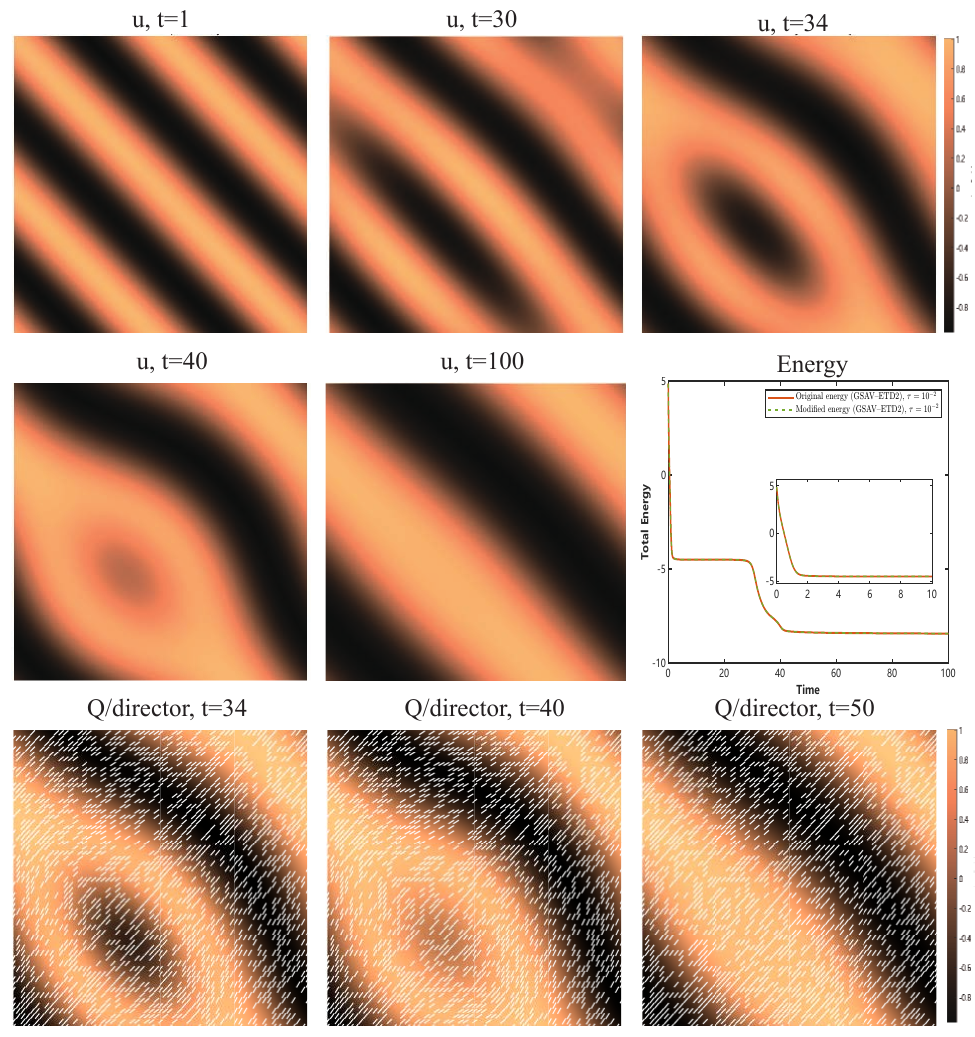}
  \caption{Periodic layer fusion and orientational reorganization through a
  dislocation-mediated reconnection event.  The density field $u$ is shown
  at $t=1,30,34,40$, and $100$; the energy histories are shown in the
  middle-right panel; and the bottom row shows the
  $\mathbf Q$-tensor/director field at $t=34,40$, and $50$.  In contrast to
  the boundary-supported single dislocation in~\cite{xia2024simple}, the
  present doubly periodic configuration has no imposed net layer-count
  mismatch.  The computation uses $\tau=10^{-2}$.}
  \label{fig:periodic-layer-fusion}
\end{figure}

Figure~\ref{fig:periodic-layer-fusion} resolves the loss of stability of the
initial layer arrangement.  Between $t=30$ and $t=40$, strongly curved
interfaces approach and reconnect, leaving a short closed layer that is
subsequently eliminated.  This two-to-one layer-reduction mechanism is the
dynamical analogue of the edge-dislocation structure reported by Xia and
Han~\cite{xia2024simple}, where two smectic layers merge into one.  Here,
however, the event occurs in a fully periodic cell and should be interpreted
as a periodically compatible, dislocation-mediated reconnection rather than
as a boundary-supported isolated dislocation.  The director field reorganizes
most visibly in the strongly curved region and then becomes coherent with
the emerging lamellae.  The pronounced decrease of both the original and
modified energies over the same time interval identifies the morphological
transition as a dissipative relaxation of the frustrated layered state,
rather than a numerical oscillation.  The experiment therefore demonstrates
that the proposed scheme resolves a physically important defect-mediated
pathway for changing the number and connectivity of smectic layers, together
with the coupled reorganization of orientational and positional order.

\subsection{Two-dimensional confined regions}\label{subsec:confined-2d}

We next embed a fixed confined region in the periodic box
$[0,L]^2$, $L=2\pi$.  If $d(\mathbf x)$ is the signed distance to the
boundary (negative inside), the diffuse indicator is
$M(\mathbf x)=\frac12[1-\tanh(d(\mathbf x)/\epsilon)]$, with
$\epsilon=2h$.  The exterior wall is imposed variationally through quadratic
penalties of strength $\alpha_{\mathbf Q}=\alpha_u=100$, and a narrow
anchoring layer is used on the boundary.  This fixed-mask construction avoids
changing the computational domain during the evolution and includes the
matching wall contribution in the discrete energy.

Unless stated otherwise, the computations in this subsection use
\begin{equation*}
N=128,\quad h=L/N,\quad \tau=5\times10^{-3},\quad T=15,
\quad K=0.1,\quad \A=-1,\quad \B=0,\quad \C=5,
\end{equation*}
\begin{equation*}
a=-5,\quad b=0,\quad c=5,\quad B_0=5\times10^{-4},
\quad \kappa_{\mathbf Q}=\kappa_u=4.
\end{equation*}
The amplitudes of the initial tensor and density fields are both $0.45$.
In Figs.~\ref{fig:confined-circle}--\ref{fig:confined-hexagon}, the color map
shows $|\mathbf Q|_F$, black curves are the zero level sets of $u$ (the layer
interfaces), white segments show the principal director, and magenta curves
mark low-order regions.  The latter are interpreted only as candidate defect
cores; they are not, by themselves, a computation of topological charge.

For the circular test, we impose homeotropic anchoring with strength $W=60$
and use $q=10$.  The boundary-adapted initial state has its director along
the wall normal and has density
\begin{equation*}
u^0=0.45\cos(q|d(\mathbf x)|).
\end{equation*}
We compare radii $R=0.42L$ and $R=0.26L$ while keeping all other parameters
fixed.

\begin{figure}[htbp]
  \centering
  \includegraphics[width=0.7\textwidth]{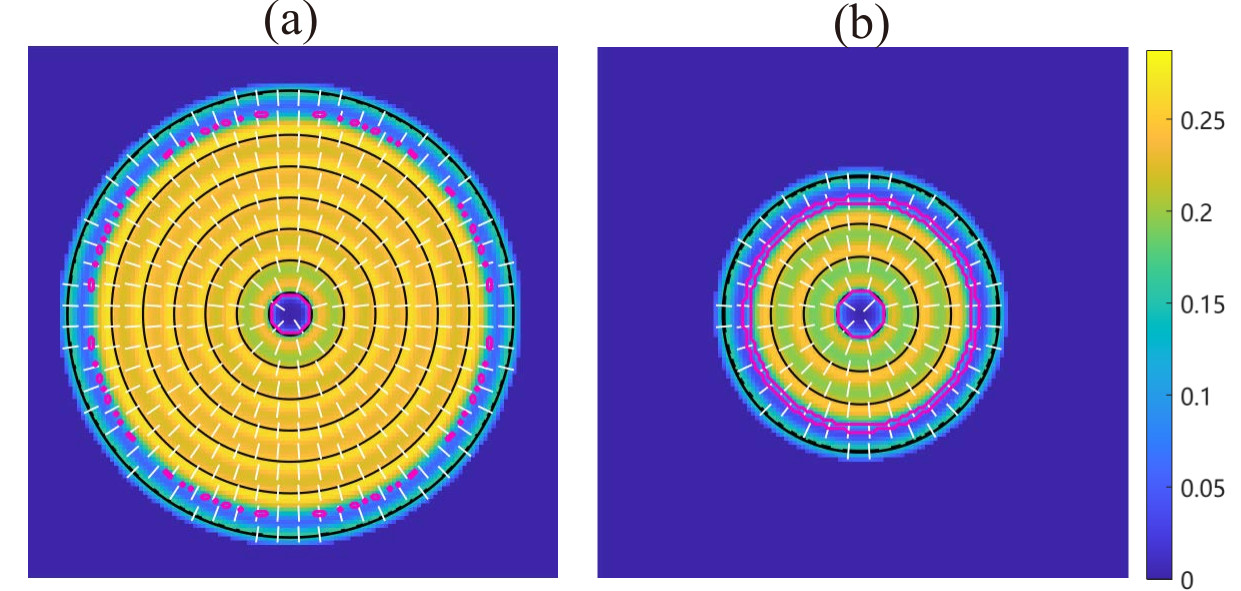}
  \caption{Final confined states at $T=15$ in circular regions with
  $q=10$ and homeotropic anchoring: (a) $R=0.42L$ and (b) $R=0.26L$.
  The common color scale represents $|\mathbf Q|_F$; black, white, and
  magenta curves represent layer interfaces, directors, and candidate
  low-order cores, respectively.}
  \label{fig:confined-circle}
\end{figure}

Both radii produce concentric layers and radial director alignment.  The
smaller cavity accommodates fewer preferred wavelengths, however, and the
low-order annulus occupies a visibly larger fraction of the interior.  This
comparison isolates the competition between the preferred layer spacing and
the finite size of the cavity.

The elliptical test uses semiaxes $0.42L$ and $0.27L$, $q=10$, and
homeotropic anchoring with $W=30$.  Two runs have identical physical and
numerical parameters but different initial states.  The tangential initial
state uses $\mathbf n^0=(-\sin\theta,\cos\theta)^T$ and a radially modulated
density, whereas the lamellar initial state uses a uniform director and
$u^0=0.45\cos(q(x-L/2))$.

\begin{figure}[htbp]
  \centering
  \includegraphics[width=0.7\textwidth]{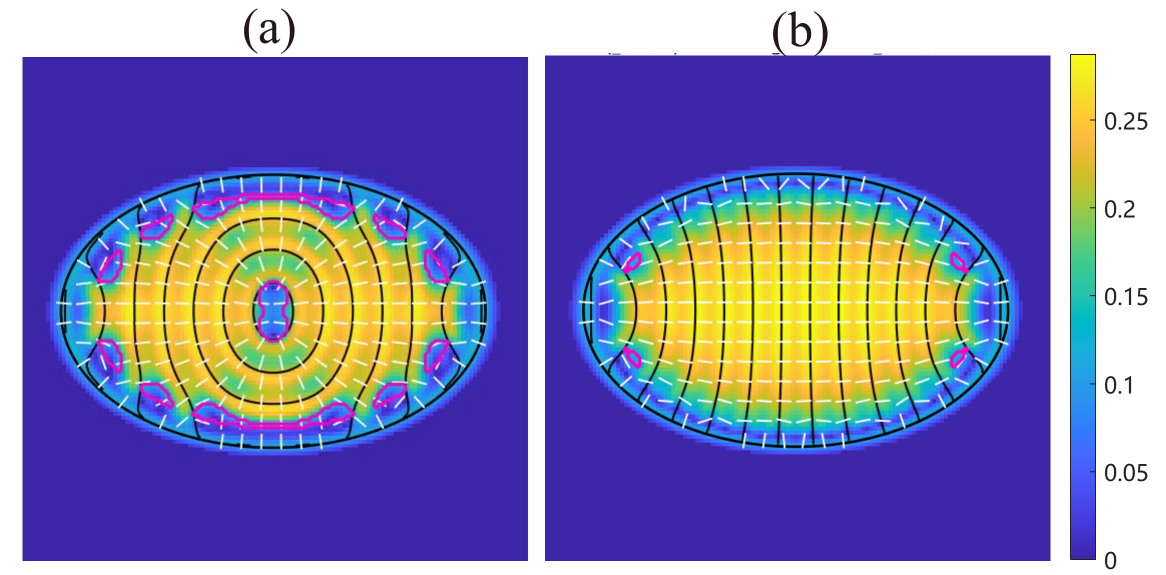}
  \caption{Initial-condition dependence in the same elliptical region at
  $T=15$: (a) tangential initial state and (b) lamellar initial state.  Here
  $q=10$, the semiaxes are $0.42L$ and $0.27L$, and the homeotropic anchoring
  strength is $W=30$.}
  \label{fig:confined-ellipse}
\end{figure}

The tangential initialization relaxes to nested curved layers with several
boundary-associated low-order regions, whereas the lamellar initialization
retains a nearly parallel family of layers and contains substantially fewer
low-order regions.  The two long-lived states therefore demonstrate
initial-condition-dependent metastability in the confined energy landscape;
the calculation does not assert uniqueness of the final state.

For the third geometry, the outer boundary is a regular hexagon of
circumradius $0.42L$.  We set $q=8$ and $W=60$ and insert at its center either
a circular hole of radius $0.14L$ or a regular pentagonal hole of the same
circumradius.  The director and density are again initialized from the
boundary-adapted state.

\begin{figure}[htbp]
  \centering
  \includegraphics[width=0.7\textwidth]{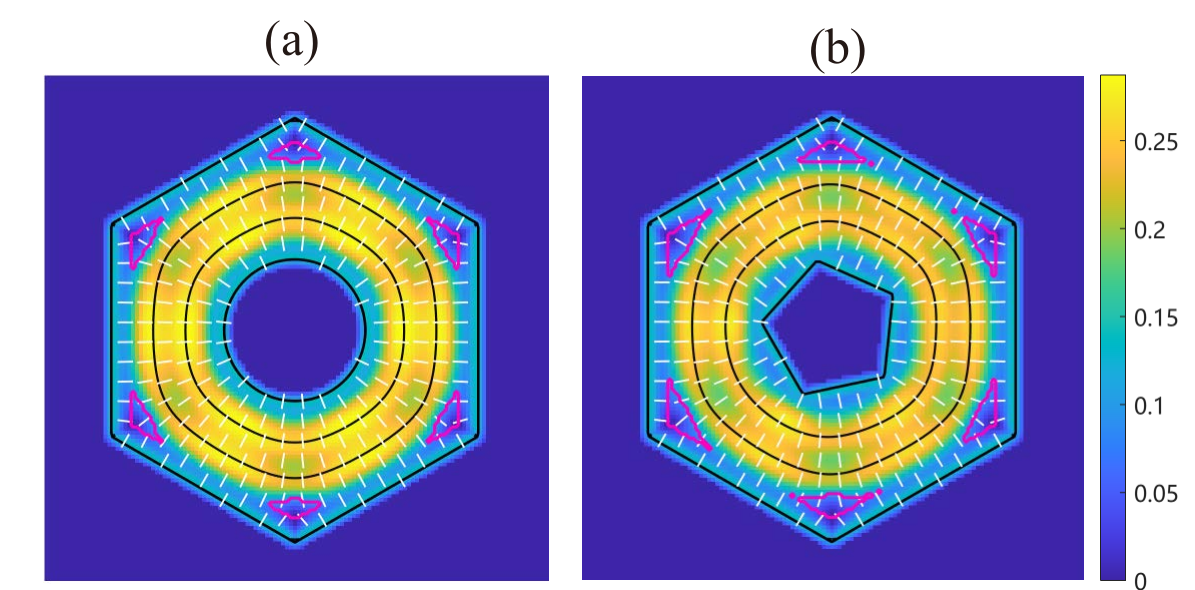}
  \caption{Final states at $T=15$ in a hexagonal outer region with (a) a
  circular inclusion of radius $0.14L$ and (b) a regular pentagonal
  inclusion of circumradius $0.14L$.  In both cases $q=8$ and the anchoring
  is homeotropic with $W=60$.}
  \label{fig:confined-hexagon}
\end{figure}

The outer corners pin localized distortions in both computations.  Replacing
the circular inclusion by a pentagon introduces additional preferred
directions at the inner boundary and produces a less uniform layer spacing
near the inclusion.  Away from the corners, the director remains closely
aligned with the layer normal, showing that the scheme resolves both the
bulk smectic order and the geometrically localized frustration.


\begin{figure}[htbp]
  \centering
  \includegraphics[width=0.8\textwidth]{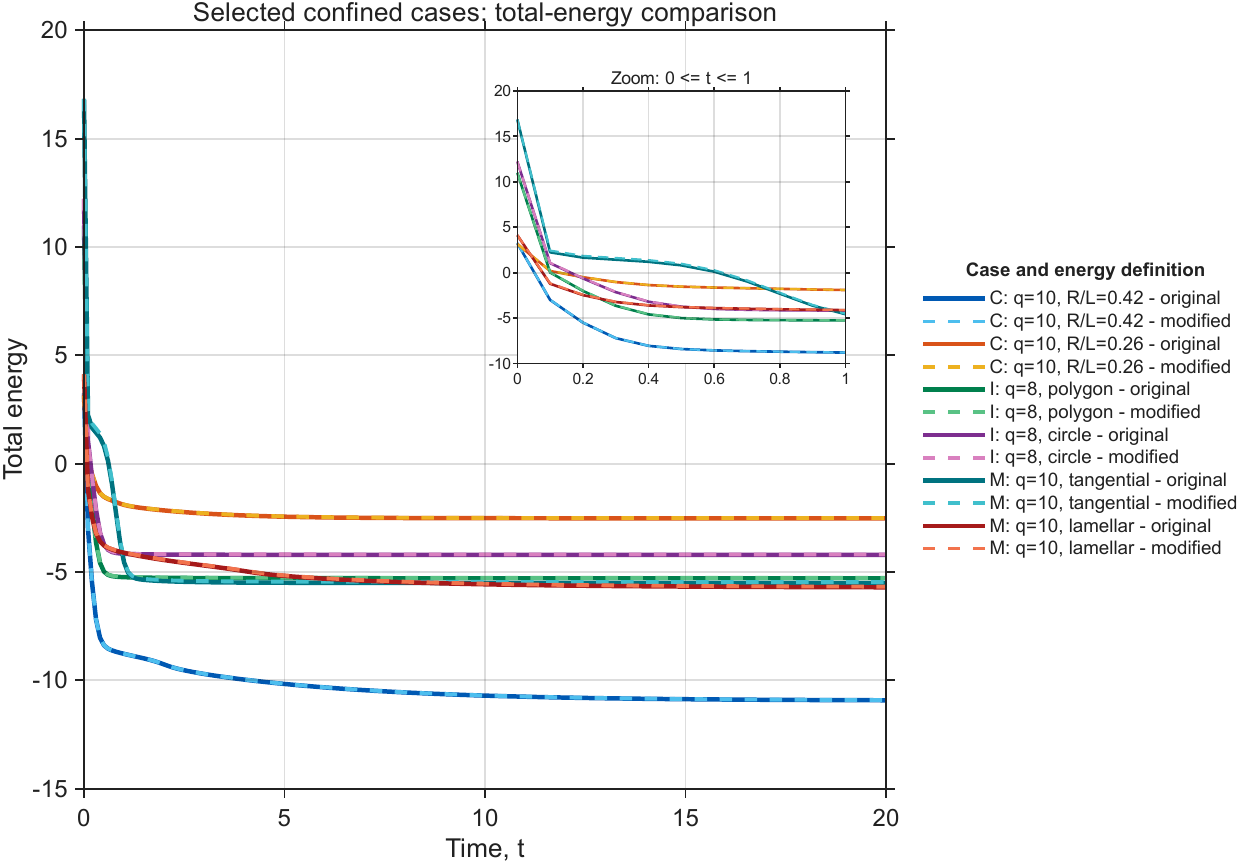}
  \caption{Original (solid) and modified (dashed) total energies for the
  confined experiments in Figs.~\ref{fig:confined-circle}--%
  \ref{fig:confined-hexagon}.  The curves labeled C correspond to the two
  circular radii in Fig.~\ref{fig:confined-circle}; M denotes the tangential
  and lamellar initial states in Fig.~\ref{fig:confined-ellipse}; and I
  denotes the circular and pentagonal inclusions in
  Fig.~\ref{fig:confined-hexagon}.  The inset enlarges the initial interval
  $0\le t\le1$.}
  \label{fig:confined-energy-comparison}
\end{figure}

Figure~\ref{fig:confined-energy-comparison} shows continuations to $T=20$ of
the six confined trajectories used in Figs.~\ref{fig:confined-circle}--%
\ref{fig:confined-hexagon}.  Every original and modified energy decreases
monotonically for the chosen time step.  The inset resolves the rapid initial
relaxation, after which the decay becomes substantially slower.  Within each
case, the original and modified curves remain nearly indistinguishable,
showing that the auxiliary energy closely tracks the physical energy
throughout the geometric reorganization.  Since these experiments have
different accessible areas, boundaries, and wall contributions, the vertical
ordering of their total energies is not interpreted as a thermodynamic
preference between different geometries.


\subsection{Three-dimensional screw-dislocation annihilation}
\label{subsec:three-dimensional-test}

Finally, we consider a neutral pair of oppositely charged screw
dislocations in the periodic cube $\Omega=(0,2\pi)^3$.  We use
\begin{equation*}
N=64,\quad h=2\pi/N,\quad \tau=10^{-2},\quad T=20,\quad
K=0.1,\quad \A=-1,\quad \B=1,\quad \C=2,
\end{equation*}
\begin{equation*}
a=-5,\quad b=0,\quad c=5,\quad q=3,\quad
B_0=2\times10^{-3},\quad \kappa_{\mathbf Q}=\kappa_u=4.
\end{equation*}
Let $(x_1,y_1)=(0.35L,0.5L)$ and
$(x_2,y_2)=(0.65L,0.5L)$, let
$\theta_j=\operatorname{atan2}(y-y_j,x-x_j)$ be evaluated with periodic
coordinate differences, and set
\begin{equation*}
\Theta=qz+\theta_1-\theta_2,
\qquad
\chi=\prod_{j=1}^2
\tanh\left(\frac{[(x-x_j)^2+(y-y_j)^2]^{1/2}}{r_c}\right),
\qquad r_c=0.22.
\end{equation*}
With $\mathbf n$ the normalized regularized phase normal, the initial data are
\begin{equation*}
u^0=0.45\chi\cos\Theta,
\qquad
\mathbf Q^0=s_+\chi
\left(\mathbf n\mathbf n^T-\tfrac13\mathbf I_3\right).
\end{equation*}
The factor $\chi$ regularizes the two cores and suppresses the order
parameter at their centers.

\begin{figure}[htbp]
  \centering
  \includegraphics[width=0.98\textwidth]{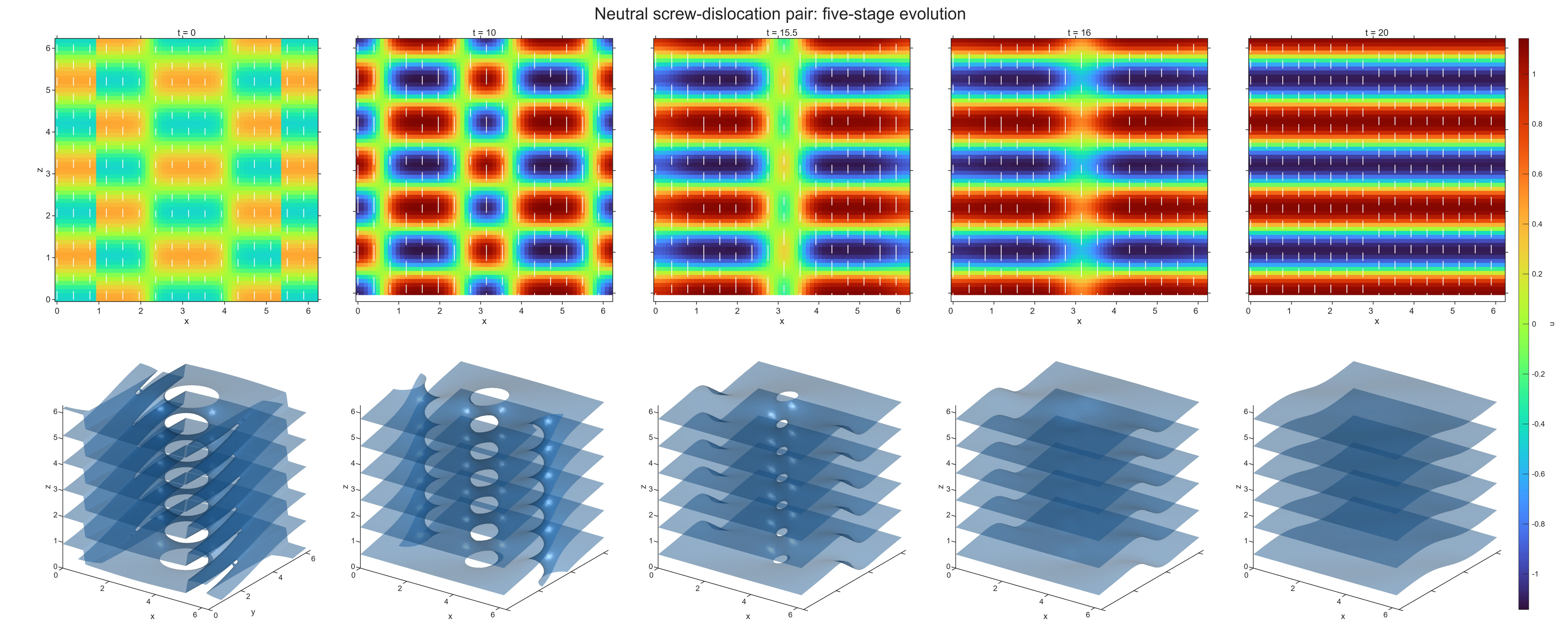}
  \caption{Five-stage evolution of a neutral screw-dislocation pair at
  $t=0,10,15.5,16$, and $20$.  The top row shows $x$--$z$ mid-plane slices
  of $u$ with the projected principal director, and the bottom row shows the
  three-dimensional zero-level surfaces of $u$.}
  \label{fig:three-dimensional-screw-pair}
\end{figure}

At early times, the two cores terminate and twist the otherwise layered
surfaces.  They approach slowly up to approximately $t=15.5$, followed by a
rapid annihilation and reconnection between $t=15.5$ and $t=16$.  By $t=20$
the defect cores have disappeared from the displayed slices and the
isosurfaces have recovered a nearly regular lamellar arrangement.  The slice
and isosurface views give mutually consistent descriptions of this
three-dimensional event.


\begin{figure}[htbp]
  \centering
  \includegraphics[width=0.8\textwidth]{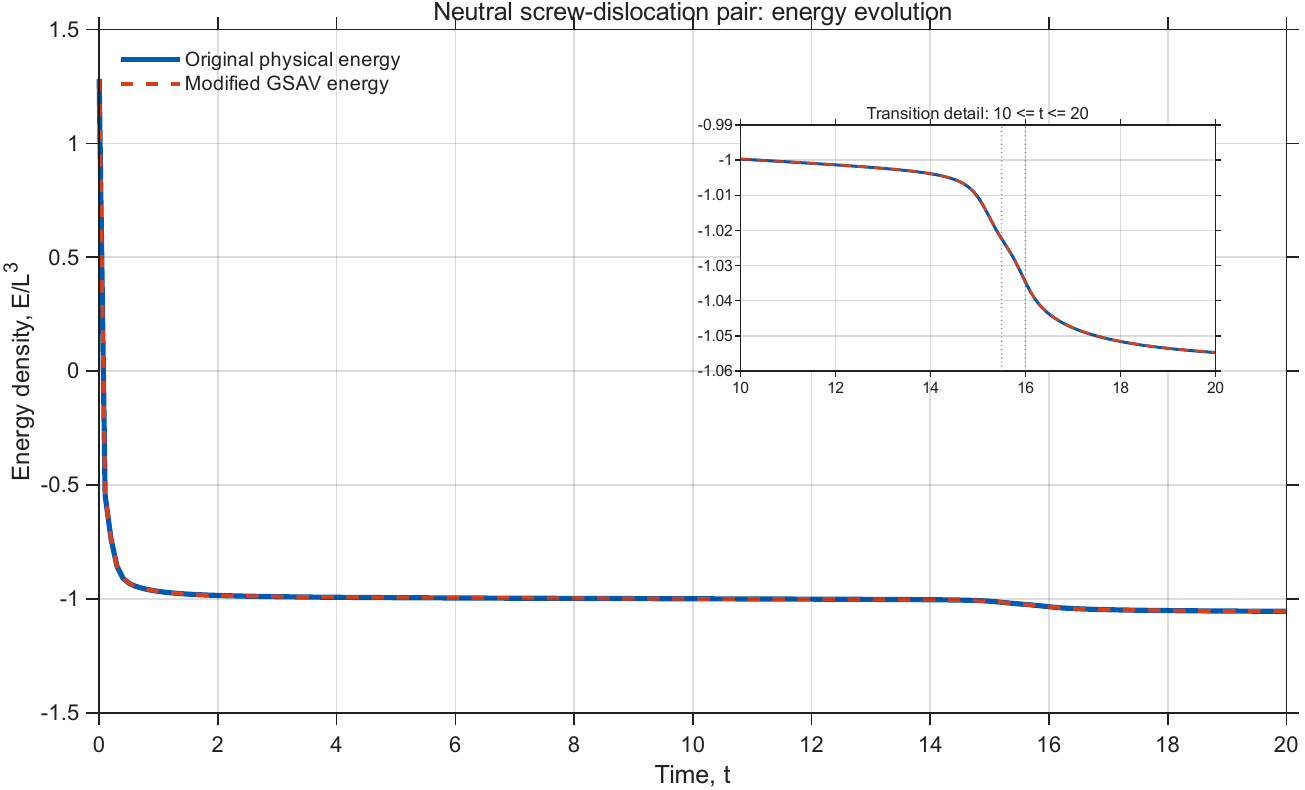}
  \caption{Original physical energy density $E_h/L^3$ and modified GSAV
  energy density during the three-dimensional screw-pair simulation.  The
  inset enlarges the transition interval $10\le t\le20$.}
  \label{fig:three-dimensional-energy-density}
\end{figure}

Both energy densities in
Fig.~\ref{fig:three-dimensional-energy-density} decrease for the chosen time
step and remain nearly indistinguishable on the scale of the plot.  The
additional energy release near $t=15.5$--$16$ coincides with the core
interaction and layer reconnection in
Fig.~\ref{fig:three-dimensional-screw-pair}.  This correlation provides a
dynamical consistency check: the most rapid morphological change is resolved
at the same time as the principal dissipative event.  Together, the
two- and three-dimensional experiments show that the GSAV--ETD2 method can
resolve wavelength selection, boundary-induced frustration, metastable
branch selection, and defect-driven layer reorganization while retaining the
predicted dissipative structure.

\section{Conclusion}\label{section7}
We have developed a second-order, structure-preserving scheme for the SmA
system. In particular, we recast the discrete-in-time exponential integrator
as a quasi-implicit scheme of backward Euler type. This formulation permits
a rigorous fully discrete error analysis and clarifies the connection between
ETD methods and backward-Euler-type formulations.

The resulting framework provides a practical numerical tool for
simulating the SmA phase. Future work will include the development of
higher-order GSAV-ETD2 schemes, the construction of methods that preserve the
physical eigenvalue constraints of the $\mathbf{Q}$-tensor, and extensions to more general
liquid-crystal hydrodynamic models.
\medskip

\noindent\textbf{Acknowledgments.} G. Ji is partially supported by the National Natural Science Foundation of China (Grant No. 12471363).

\appendix
\section{Proofs of auxiliary results}
\label{app:auxiliary_proofs}

\subsection{Proof of Lemma~\ref{lem_phi_ineq}}
\label{app:proof_phi_ineq}
\begin{proof}
By \eqref{eq3a},
 since $1-\varsigma \le 1$ for $\varsigma \in (0,1)$ and $e^{-\varsigma z} > 0$, it immediately follows that $\varphi_2(z) \le \varphi_1(z)$.
Moreover,
\[
0\le z\varphi_2(z)
=\frac{e^{-z}-1+z}{z}
=1-\varphi_1(z)\le 1,
\]
where the identity at $z=0$ is understood by continuity.

To prove $\varphi_1(z) \le 2\varphi_2(z)$, we use their explicit algebraic forms:
\[
\frac{1-e^{-z}}{z} \le 2 \frac{e^{-z}-1+z}{z^2}.
\]
For $z>0$, multiplying by $z^2$ and rearranging yields the equivalent condition $y(z) \ge 0$, where
\[
y(z) = z + (z+2)e^{-z} - 2.
\]
The case $z=0$ follows from the continuous extensions of $\varphi_1$ and $\varphi_2$.
Differentiation gives
\[
y'(z) = 1 - (z+1)e^{-z}, \quad y''(z) = z e^{-z}.
\]
For $z \ge 0$, $y''(z) \ge 0$, so $y'$ is nondecreasing. Since $y'(0) = 0$, we have $y'(z) \ge 0$ for all $z \ge 0$. Thus, $y$ is nondecreasing, and $y(0)=0$ implies that $y(z) \ge 0$ for all $z \ge 0$.

Define the auxiliary function $F(z)$ by
\[
F(z) = \frac{1}{2}\bar{\varphi}_2(z) - \bar{\varphi}_1(z) + \frac{1}{2} z.
\]
Substituting the definitions into $F(z)$ and factoring out $\frac{z}{2}$, we obtain
\[
\begin{aligned}
F(z) &= \frac{z}{2} \left( \frac{z}{e^{-z}-1+z} - \frac{2}{1-e^{-z}} + 1 \right) \\
&= \frac{z}{2} \frac{z(1-e^{-z}) - 2(e^{-z}-1+z) + (e^{-z}-1+z)(1-e^{-z})}{(e^{-z}-1+z)(1-e^{-z})}.
\end{aligned}
\]
Let $N(z)$ denote the numerator of the fraction. Expanding and collecting terms gives
\[
\begin{aligned}
N(z) &= (z - ze^{-z}) - (2e^{-z} - 2 + 2z) + (2e^{-z} - e^{-2z} - 1 + z - ze^{-z}) \\
&= 1 - e^{-2z} - 2ze^{-z}.
\end{aligned}
\]
To determine the sign of $N(z)$ for $z \ge 0$, we compute
\[
N'(z) = 2e^{-2z} - 2(e^{-z} - ze^{-z}) = 2e^{-z}(e^{-z} + z - 1).
\]
The standard exponential inequality $e^{-z} \ge 1-z$ gives $e^{-z}+z-1 \ge 0$. Hence, $N'(z) \ge 0$ for all $z \ge 0$. Since $N(0)=0$ and $N$ is nondecreasing, $N(z) \ge 0$ for $z \ge 0$.

For $z > 0$, the denominator $D(z) = (e^{-z}-1+z)(1-e^{-z})$ is strictly positive because $e^{-z} > 1-z$ and $1 > e^{-z}$.
Consequently, the fraction is nonnegative. Since $z/2\ge 0$, we obtain
\[
\frac{1}{2}\bar{\varphi}_2(z) - \bar{\varphi}_1(z) + \frac{1}{2} z \ge 0, \quad \forall z \ge 0.
\]
\end{proof}

\subsection{Proof of Lemma~\ref{lem_parabolic_regularity_bootstrap}}
\label{app:proof_parabolic_regularity}
\begin{proof}
    Differentiating \eqref{eq1_9_continuous_1} and \eqref{eq1_9_continuous_2} with respect to time and rearranging gives
\begin{equation}
    \mathcal L\mathbf Q_t
=
\partial_t\mNQz-\mathbf Q_{tt}, \quad
\mathcal D u_t
=
\partial_t\mNuz-u_{tt}.
\label{eq:ut_bootstrap_equation}
\end{equation}
Using the Sobolev embedding $H^2\hookrightarrow L^\infty$ for $d\le3$ and \eqref{eqa_19}, we obtain
\begin{align}
\|\partial_t\mNuz\|_{L^2}
&\le
C\left(
1
+\left\| \partial_t\bigl(\mathbf M:D^2u\bigr) \right\|_{L^2} + \left\| \partial_t\left[ \nabla\cdot\nabla\cdot \bigl(\mathbf Mu\bigr) \right] \right\|_{L^2}
\right)
\nonumber\\
&\le
C\left(
1
+\left\|
\mathbf M_t:D^2u+\mathbf M:D^2u_t
\right\|_{L^2}
+\left\|
\mathbf M_tu+\mathbf M u_t
\right\|_{H^2}
\right)
\nonumber\\
&\le
C\left(
1+\|\mathbf Q_t\|_{H^2}
+\|u_t\|_{H^2}
\right)
\le C.
\label{eq:Nu_time_bound}
\end{align}
By \eqref{eq:ut_bootstrap_equation} and the assumption
$u_{tt}\in L^\infty(0,T;H^2)$ in \eqref{eqa_19}, we have
\ba \label{eq:ut_H4_bound}
\|u_t\|_{H^4}
\le
C\|\mathcal D u_t\|_{L^2}
\le
C\bigl(\|\partial_t\mNuz\|_{L^2}
+
\|u_{tt}\|_{L^2}\bigr)
\le C.
\ed

Using the algebra property of $H^2$ for $d\le3$, the bound
$u\in L^\infty(0,T;H^6)$, and \eqref{eq:ut_H4_bound}, we obtain
\begin{align}
\|\partial_t\mNQz\|_{H^2}
&\le
C\left(
1+\|\mathbf Q_t\|_{H^2}
+\|u_tD^2u\|_{H^2}
+\|uD^2u_t\|_{H^2}
\right)
\nonumber\\
&\le
C\left(
1+\|\mathbf Q_t\|_{H^2}
+\|u_t\|_{H^4}
\right)
\le C.
\label{eq:NQ_time_bound}
\end{align}
It then follows from \eqref{eq:NQ_time_bound} and the
assumption
$\mathbf Q_{tt}\in L^\infty(0,T;\mathbf H^2)$ that
\ba\label{eq:Qt_H4_bound}
\|\mathbf Q_t\|_{H^4}
\le
C\|\mathcal L\mathbf Q_t\|_{H^2}
\le
C\bigl(\|\partial_t\mNQz\|_{H^2}
+
\|\mathbf Q_{tt}\|_{H^2}\bigr)
\le C.
\ed

Using the algebra properties of $H^2$ and $H^4$ for $d\le3$, and
combining \eqref{eqa_19} with \eqref{eq:ut_H4_bound} and
\eqref{eq:Qt_H4_bound}, we obtain
\begin{align}
\|\partial_t\mNuz\|_{H^2}
&\le
C\left(
1
+\left\|
\mathbf M_t:D^2u
+\mathbf M(\mathbf Q):D^2u_t
\right\|_{H^2}
+\left\|
\mathbf M_tu+\mathbf M(\mathbf Q)u_t
\right\|_{H^4}
\right)
\nonumber\\
&\le
C\left(
1
+\|\mathbf M_t\|_{H^4}\|u\|_{H^4}
+\|\mathbf M(\mathbf Q)\|_{H^4}\|u_t\|_{H^4}
\right)
\nonumber\\
&\le
C\left(
1+\|\mathbf Q_t\|_{H^4}
+\|u_t\|_{H^4}
\right)
\le C.
\label{eqacc}
\end{align}
It follows from \eqref{eqacc} and the assumption
$u_{tt}\in L^\infty(0,T;H^2)$ that
\ba\label{eq:ut_H6_bound}
\|u_t\|_{H^6}
\le
C
\|\mathcal D u_t\|_{H^2}
\le
C\bigl(\|\partial_t\mNuz\|_{H^2}
+
\|u_{tt}\|_{H^2}\bigr)
\le C.
\ed

Using the algebra property of $H^4$ for $d\le3$ and the regularity
bounds in \eqref{eqa_19}, we obtain
\begin{align}
\|\mNQz\|_{H^4}
&\le
C\left(
1+\|uD^2u\|_{H^4}
\right)
\nonumber\\
&\le
C\left(
1+\|u\|_{H^4}\|D^2u\|_{H^4}
\right)
\nonumber\\
&\le
C\left(
1+\|u\|_{H^4}\|u\|_{H^6}
\right)
\le C.
\label{eq:NQ_H4_bound}
\end{align}
Combining this estimate with \eqref{eq:Qt_H4_bound}, we find
\begin{align}
    \|\mathbf Q\|_{H^6}
\le
C\|\mathcal L\mathbf Q\|_{H^4}
\le
C\bigl(\|\mNQz\|_{H^4}
+
\|\mathbf Q_t\|_{H^4}\bigr)
\le C.\label{eq1a}
\end{align}

Using the algebra properties of $H^4$ and $H^6$ for $d\le3$, together
with the regularity bounds in \eqref{eqa_19}, we obtain
\begin{align}
\|\mNuz\|_{H^4}
&\le
C\left(
1
+\|\mathbf M(\mathbf Q):D^2u\|_{H^4}
+\|\mathbf M(\mathbf Q)u\|_{H^6}
\right)
\nonumber\\
&\le
C\left(
1
+\|\mathbf M(\mathbf Q)\|_{H^4}\|u\|_{H^6}
+\|\mathbf M(\mathbf Q)\|_{H^6}\|u\|_{H^6}
\right)
\nonumber\\
&\le
C\left(
1+\|\mathbf Q\|_{H^6}\|u\|_{H^6}
\right)
\le C.
\label{eq:Nu_H4_bound}
\end{align}
Using this estimate and \eqref{eq:ut_H6_bound}, we obtain
\begin{align}
\|u\|_{H^8}
\le
C\|\mathcal D u\|_{H^4}
\le
C\bigl(\|\mNuz\|_{H^4}
+
\|u_t\|_{H^4}\bigr)
\le C.\label{eq2a}
\end{align}

Finally, we estimate the second time derivatives of the nonlinear
terms $\mNQz$ and $\mNuz$.
The second time derivatives of the leading terms are
\ba \label{eq:NQtt_leading}
\partial_{tt}(uD^2u)
&=
u_{tt}D^2u
+
2u_tD^2u_t
+
uD^2u_{tt},
\\
\partial_{tt}\bigl(\mathbf M(\mathbf Q):D^2u\bigr)
&=
\mathbf M_{tt}:D^2u
+
2\mathbf M_t:D^2u_t
+
\mathbf M(\mathbf Q):D^2u_{tt},
\\
\partial_{tt}\left[
\nabla\cdot\nabla\cdot
\bigl(\mathbf M(\mathbf Q)u\bigr)
\right]
&=
\nabla\cdot\nabla\cdot
\Bigl(
\mathbf M_{tt}u
+
2\mathbf M_tu_t
+
\mathbf M(\mathbf Q)u_{tt}
\Bigr).
\ed
Using the regularity assumptions \eqref{eqa_19},
together with \eqref{eq1a} and \eqref{eq2a},
we deduce from
\eqref{eq:NQtt_leading} that
\begin{align}
\left\|\partial_{tt}(uD^2u)\right\|_{L^2}
+\left\|
\partial_{tt}\bigl(\mathbf M(\mathbf Q):D^2u\bigr)
\right\|_{L^2}
+\left\|
\partial_{tt}\left[
\nabla\cdot\nabla\cdot
\bigl(\mathbf M(\mathbf Q)u\bigr)
\right]
\right\|_{L^2}
\le C.
\end{align}
Estimating the remaining lower-order terms in the same manner, we conclude that
\[
\|\partial_{tt}\mNQz\|_{L^2}
+
\|\partial_{tt}\mNuz\|_{L^2}
\le C,
\]
which completes the proof.
\end{proof}

\bibliographystyle{siam}
\bibliography{S0362546X14002934}

\begin{thebibliography}{10}

\bibitem{Ball03052015}
{\sc J.~M. Ball and S.~J. Bedford}, {\em Discontinuous order parameters in liquid crystal theories}, Molecular Crystals and Liquid Crystals, 612 (2015), pp.~1--23.

\bibitem{ball2011orientability}
{\sc J.~M. Ball and A.~Zarnescu}, {\em Orientability and energy minimization in liquid crystal models}, Archive for Rational Mechanics and Analysis, 202 (2011), pp.~493--535.

\bibitem{baskaran2013convergence}
{\sc A.~Baskaran, J.~S. Lowengrub, C.~Wang, and S.~M. Wise}, {\em Convergence analysis of a second order convex splitting scheme for the modified phase field crystal equation}, SIAM Journal on Numerical Analysis, 51 (2013), pp.~2851--2873.

\bibitem{biscari2007landau}
{\sc P.~Biscari, M.~C. Calderer, and E.~M. Terentjev}, {\em Landau--de gennes theory of isotropic-nematic-smectic liquid crystal transitions}, Physical Review E—Statistical, Nonlinear, and Soft Matter Physics, 75 (2007), p.~051707.

\bibitem{chen1976landau}
{\sc J.-h. Chen and T.~Lubensky}, {\em {Landau-Ginzburg} mean-field theory for the nematic to {Smectic-C} and nematic to {Smectic-A} phase transitions}, Physical Review A, 14 (1976), p.~1202.

\bibitem{deGennes1974}
{\sc P.-G. de~Gennes}, {\em The Physics of Liquid Crystals}, Oxford University Press, Oxford, 1974.

\bibitem{du2018stabilized}
{\sc Q.~Du, L.~Ju, X.~Li, and Z.~Qiao}, {\em Stabilized linear semi-implicit schemes for the nonlocal cahn--hilliard equation}, Journal of Computational Physics, 363 (2018), pp.~39--54.

\bibitem{du2019}
\leavevmode\vrule height 2pt depth -1.6pt width 23pt, {\em Maximum-principle-preserving exponential time differencing schemes for the nonlocal {Allen--Cahn} equation}, SIAM Journal on Numerical Analysis, 57 (2019), pp.~875--898.

\bibitem{du2021}
\leavevmode\vrule height 2pt depth -1.6pt width 23pt, {\em Maximum bound principles for a class of semilinear parabolic equations and exponential time-differencing schemes}, SIAM Review, 63 (2021), pp.~317--359.

\bibitem{du1991numerical}
{\sc Q.~Du and R.~A. Nicolaides}, {\em Numerical analysis of a continuum model of phase transition}, SIAM Journal on Numerical Analysis, 28 (1991), pp.~1310--1322.

\bibitem{E1997modeling}
{\sc W.~E}, {\em Nonlinear continuum theory of smectic-a liquid crystals}, Archive for Rational Mechanics and Analysis, 137 (1997), pp.~159--175.

\bibitem{eyre1998unconditionally}
{\sc D.~J. Eyre}, {\em Unconditionally gradient-stable time marching for the {Cahn--Hilliard} equation}, MRS Online Proceedings Library (OPL), 529 (1998), p.~39.

\bibitem{fei2018isotropic}
{\sc M.~Fei, W.~Wang, P.~Zhang, and Z.~Zhang}, {\em On the isotropic--nematic phase transition for the liquid crystal}, Peking Mathematical Journal, 1 (2018), pp.~141--219.

\bibitem{feng2013stabilized}
{\sc X.~Feng, T.~Tang, and J.~Yang}, {\em Stabilized {Crank--Nicolson}/{Adams--Bashforth} schemes for phase-field models}, East Asian Journal on Applied Mathematics, 3 (2013), pp.~59--80.

\bibitem{furihata2001stable}
{\sc D.~Furihata}, {\em A stable and conservative finite difference scheme for the {Cahn--Hilliard} equation}, Numerische Mathematik, 87 (2001), pp.~675--699.

\bibitem{han2015microscopic}
{\sc J.~Han, Y.~Luo, W.~Wang, P.~Zhang, and Z.~Zhang}, {\em From microscopic theory to macroscopic theory: a systematic study on modeling for liquid crystals}, Archive for Rational Mechanics and Analysis, 215 (2015), pp.~741--809.

\bibitem{hu2026structurepreservinggsavexponentialintegrator}
{\sc W.~Hu, G.~Ji, and X.~Li}, {\em A structure-preserving gsav exponential integrator for smectic-a liquid crystals}, 2026.

\bibitem{huangGlobalWellposednessDynamical2015}
{\sc J.~Huang and S.~Ding}, {\em Global well-posedness for the dynamical {{Q}-tensor} model of liquid crystals}, Science China Mathematics, 58 (2015), pp.~1349--1366.

\bibitem{huang2024errorestimateorderenergy}
{\sc J.~Huang, X.~Li, and G.~Ji}, {\em Error estimate for the first-order energy-stable scheme of the {Q}-tensor nematic model}, 2024.

\bibitem{izzo2020landau}
{\sc D.~Izzo and M.~J. De~Oliveira}, {\em {Landau} theory for isotropic, nematic, {Smectic-A}, and {Smectic-C} phases}, Liquid Crystals, 47 (2020), pp.~99--105.

\bibitem{Ji2020BDF2}
{\sc G.~Ji}, {\em A {BDF2} energy-stable scheme for a general tensor-based model of liquid crystals}, East Asian Journal on Applied Mathematics, 10 (2020), pp.~57--71.

\bibitem{jiang2022improving}
{\sc M.~Jiang, Z.~Zhang, and J.~Zhao}, {\em Improving the accuracy and consistency of the scalar auxiliary variable (sav) method with relaxation}, Journal of Computational Physics, 456 (2022), p.~110954.

\bibitem{ju2022generalized}
{\sc L.~Ju, X.~Li, and Z.~Qiao}, {\em Generalized {SAV}-exponential integrator schemes for {Allen--Cahn}-type gradient flows}, SIAM Journal on Numerical Analysis, 60 (2022), pp.~1905--1931.

\bibitem{liu2007dynamics}
{\sc C.~Liu, J.~Shen, and X.~Yang}, {\em Dynamics of defect motion in nematic liquid crystal flow: modeling and numerical simulation}, Communications in Computational Physics, 2 (2007), pp.~1184--1198.

\bibitem{liu2025maximum}
{\sc Y.~Liu, C.~Quan, and D.~Wang}, {\em On the maximum bound principle and energy dissipation of exponential time differencing methods for the matrix-valued allen--cahn equation}, IMA Journal of Numerical Analysis, 45 (2025), pp.~3342--3377.

\bibitem{majumdar2010equilibrium}
{\sc A.~Majumdar}, {\em Equilibrium order parameters of nematic liquid crystals in the {Landau-de Gennes} theory}, European Journal of Applied Mathematics, 21 (2010), pp.~181--203.

\bibitem{pazy2012semigroups}
{\sc A.~Pazy}, {\em Semigroups of linear operators and applications to partial differential equations}, Springer Science \& Business Media, 2012.

\bibitem{pevnyi2014modeling}
{\sc M.~Y. Pevnyi, J.~V. Selinger, and T.~J. Sluckin}, {\em Modeling smectic layers in confined geometries: Order parameter and defects}, Physical Review E, 90 (2014), p.~032507.

\bibitem{shen2018scalar}
{\sc J.~Shen, J.~Xu, and J.~Yang}, {\em The scalar auxiliary variable (sav) approach for gradient flows}, Journal of Computational Physics, 353 (2018), pp.~407--416.

\bibitem{shen2019new}
\leavevmode\vrule height 2pt depth -1.6pt width 23pt, {\em A new class of efficient and robust energy stable schemes for gradient flows}, SIAM Review, 61 (2019), pp.~474--506.

\bibitem{shen2010numerical}
{\sc J.~Shen and X.~Yang}, {\em Numerical approximations of {Allen--Cahn} and {Cahn--Hilliard} equations}, Discrete Contin. Dyn. Syst., 28 (2010), pp.~1669--1691.

\bibitem{shi2025modified}
{\sc B.~Shi, Y.~Han, C.~Ma, A.~Majumdar, and L.~Zhang}, {\em A modified {Landau--de Gennes} theory for smectic liquid crystals: phase transitions and structural transitions}, SIAM Journal on Applied Mathematics, 85 (2025), pp.~821--847.

\bibitem{wang2021modelling}
{\sc W.~Wang, L.~Zhang, and P.~Zhang}, {\em Modelling and computation of liquid crystals}, Acta Numerica, 30 (2021), pp.~765--851.

\bibitem{xia2023variational}
{\sc J.~Xia and P.~E. Farrell}, {\em Variational and numerical analysis of a {Q}-tensor model for {Smectic-A} liquid crystals}, ESAIM: Mathematical Modelling and Numerical Analysis, 57 (2023), pp.~693--716.

\bibitem{xia2024simple}
{\sc J.~Xia and Y.~Han}, {\em Simple tensorial theory of smectic c liquid crystals}, Physical Review Research, 6 (2024), p.~033232.

\bibitem{xia2021structural}
{\sc J.~Xia, S.~MacLachlan, T.~J. Atherton, and P.~E. Farrell}, {\em Structural landscapes in geometrically frustrated smectics}, Physical Review Letters, 126 (2021), p.~177801.

\bibitem{xu2006stability}
{\sc C.~Xu and T.~Tang}, {\em Stability analysis of large time-stepping methods for epitaxial growth models}, SIAM Journal on Numerical Analysis, 44 (2006), pp.~1759--1779.

\bibitem{xu2019efficient}
{\sc Z.~Xu, X.~Yang, H.~Zhang, and Z.~Xie}, {\em Efficient and linear schemes for anisotropic {Cahn--Hilliard} model using the stabilized-invariant energy quadratization ({S-IEQ}) approach}, Computer Physics Communications, 238 (2019), pp.~36--49.

\bibitem{yang2017linearly}
{\sc X.~Yang and D.~Han}, {\em Linearly first- and second-order, unconditionally energy-stable schemes for the phase-field crystal model}, Journal of Computational Physics, 330 (2017), pp.~1116--1134.

\bibitem{yang2020convergence}
{\sc X.~Yang and G.-D. Zhang}, {\em Convergence analysis for the invariant energy quadratization ({IEQ}) schemes for solving the {Cahn--Hilliard} and {Allen--Cahn} equations with general nonlinear potential}, Journal of Scientific Computing, 82 (2020), p.~55.

\end{thebibliography}

\end{document}